\documentclass[a4paper,reqno]{amsart}

\usepackage[utf8]{inputenc}
\usepackage{amssymb}
\usepackage{enumitem}
\usepackage{color}
\usepackage{mathrsfs}
\usepackage{mathtools}
\setlist[enumerate]{label=\emph{(\roman*)}}

\usepackage{hyperref}

\newtheorem{theorem}{Theorem}[section]
\newtheorem{corollary}[theorem]{Corollary}
\newtheorem{lemma}[theorem]{Lemma}
\newtheorem{proposition}[theorem]{Proposition}

\theoremstyle{definition}
\newtheorem{definition}[theorem]{Definition}
\newtheorem{remark}[theorem]{Remark}

\numberwithin{equation}{section}

\newcommand\R{\mathbb{R}}
\newcommand\e{\epsilon}

\newcommand\T{\mathbb{T}}
\newcommand{\la}{\langle}
\newcommand{\ra}{\rangle}

\begin{document}

\parindent=0pt

\title[three-dimensional Euler equation with helical symmetry]
{Global well-posedness of weak solutions to the incompressible Euler equations with helical symmetry in $\R^3$}

\author[D. Guo]{Dengjun Guo}
\address{School of Mathematical Sciences,
University of Science and Technology of China, Hefei 230026, Anhui, China}
\email{guodeng@mail.ustc.edu.cn}
\author[L. Zhao]{Lifeng Zhao}
\address{School of Mathematical Sciences,
University of Science and Technology of China, Hefei 230026, Anhui, China}
\email{zhaolf@ustc.edu.cn}
\email{}

\thanks{L. Zhao is supported by NSFC Grant of China No. 12271497, No. 12341102 and the National Key Research and Development Program of China No. 2020YFA0713100.
}
\thanks{\textbf{Data Availability Statements:} Data sharing not applicable to this article as no datasets were generated or analysed during the current study.}
\begin{abstract}
We consider the three-dimensional incompressible Euler equation
\begin{equation*}\left\{\begin{aligned}
&\partial_t \Omega+U \cdot \nabla \Omega-\Omega\cdot \nabla U=0  \\
&\Omega(x,0)=\Omega_0(x)
\end{aligned}\right.
\end{equation*}
in the whole space $\R^3$. Under the assumption that $\Omega^z$ is helical and in the absence of vorticity stretching, we prove the global well-posedness of weak solutions in $L^1_1 \bigcap L^{\infty}_1(\R^3)$. The vortex transport formula is also obtained in our article.
\end{abstract}

\maketitle

\section{Introduction}

\quad We consider the three-dimensional incompressible Euler equation in $\R^3$,
\begin{equation}\label{eq 3euler velocity}\left\{ \begin{aligned}
&\partial_t U + U \cdot \nabla U+ \nabla P=0 \\
&\nabla \cdot U=0 \\
&U(x,0)=U_0(x).
\end{aligned}\right.
\end{equation}

    \quad The equation describes the motion of an ideal incompressible fluid in $\R^3$ with initial velocity $U_0(x)$. Here $U=(U^1,U^2,U^3):\R^3\times \R \to \R^3$ represents the velocity and $P:\R^3\times \R \to \R$ represents the scalar pressure which can be determined by the velocity $U$ via the incompressibility condition.

  \quad The motion of the fluid can also be described by its vorticity $\Omega(x,t):=\nabla \wedge U(x,t)$, where
  $$
  (a_1,a_2,a_3)\wedge (b_1,b_2,b_3):=(a_2b_3-a_3b_2,a_3b_1-a_1b_3,a_1b_2-a_2b_1).
  $$

   Moreover, the vorticity satisfies
\begin{equation}\label{eq 3euler}\left\{\begin{aligned}
&\partial_t \Omega+U \cdot \nabla \Omega-\Omega\cdot \nabla U=0 \\
&\Omega(x,0)=\nabla \wedge U_0(x):=\Omega_0(x),
\end{aligned}\right.
\end{equation}
which is called the vorticity-stream formulation of the Euler equations.
The velocity $U$ can be recovered from $\Omega$ by the well-known Biot-Savart law
\begin{equation}\label{eq biot savart law}U(x)=-\frac{1}{4\pi}\int_{\R^3}\frac{x-y}{|x-y|^3}\wedge \Omega(y)\,dy.\end{equation}
\quad The local well-posedness of the three-dimensional Euler equation has been studied in \cite{MaB} for initial data $U_0 \in H^m$ and $m \ge 4$. However, the global existence of smooth solutions remains open due to the strong nonlinearity of the vorticity stretching term $\Omega \cdot \nabla U$, see  \cite{BT} or \cite{MaB} for more references.

  \quad Unlike the three-dimensional case, the two-dimensional incompressible Euler equation
  \begin{equation*}\left\{ \begin{aligned}
&\partial_t w + u \cdot \nabla w=0 \\
&u=\nabla^{\perp} (-\Delta)^{-1} w \\
&w(x,0)=w_0(x)
\end{aligned}\right.
\end{equation*}
   is globally well-posed in $L^1 \bigcap L^{\infty}$ due to the absence of the vorticity stretching term, see \cite{MaB} and \cite{Yud}. Moreover, the two-dimensional incompressible Euler equation can sometimes be viewed as a transport equation and the vortex transport formula holds for solutions with bounded $L^1 \bigcap L^{\infty}$ norm. That is, let $X(\alpha,t)$ solves the ODE \begin{equation*}\begin{cases}\begin{aligned}
&\frac{dX(\alpha,t)}{dt}= u(X(\alpha,t),t)\\
&X(\alpha,0)=\alpha
\end{aligned}\end{cases}\end{equation*} for all $\alpha \in \R^3$, then $w(X(\alpha,t),t)=w_0(\alpha)$. This formula shows that vorticity is transported by the flow, which is extremely useful in the vortex filaments problem, see \cite{MaB} for references. Yudovich \cite{Yu2} extended the uniqueness results to the solutions whose $L^p$ norm grows slowly as $p \to \infty$ and Vishik \cite{Vis} extended the results to Besov type spaces. The uniqueness of weak solutions to the two-dimensional Euler equation can also be derived via a Lagrangian method, see Crippa and Stefani \cite{St} or Marchioro and Pulvirenti \cite{MaP}.  The existence of global solutions to the two-dimensional Euler equation with $L^p$ initial vorticity has been established by Majda and Diperna in \cite{Ma2}. Later, Delort proved the global existence for measure valued initial data \cite{Del}.  However, the uniqueness might not hold for $L^p$ initial data, we refer to the works by Vishik \cite{Vi1} and \cite{Vi2}.

    \quad Some three-dimensional flows with special symmetries can be reduced to two-dimensional flows. Among the most important examples are axisymmetric flows and helical flows without swirl. The first well-posedness result for axisymmetric solutions was obtained by Yudovich in \cite{Yu3} and then extended to Lorentz spaces (including $L^1 \bigcap L^{\infty}(\R^3)$) by Danchin in \cite{Da}. The global well-posedness for smooth solutions has been established in \cite{GZ}, \cite{Ma1} and \cite{SR}. In \cite{CK}, D. Chae and N. Kim obtained the global existence of weak solutions for $\frac{w_0^{\theta}}{r}\in L^1\bigcap L^p(\R^3)$ with $p>\frac{6}{5}$ and then Jiu, Liu and Niu extended the results to any $p>1$ \cite{JLN2}.
For helical solutions, in \cite{Dut} Dutrifoy proved the global well-posedness for smooth solutions in bounded domains, a key observation is that the third component $\Omega^z$ of the vorticity $\Omega$  satisfies
\begin{equation}\label{eq 3euler z}
\partial_t \Omega^z+U \cdot \nabla \Omega^z=0,
\end{equation}
which shows that $\Omega^z$ is transported by the flow.
Setting $w(x_1,x_2)=\Omega^z(x_1,x_2,0)$, Ettinger and Titi reduced the three-dimensional Euler equation to the following two-dimensional problem:
    \begin{equation*}
\partial_t w+\nabla^{\perp}L_H^{-1}w \cdot \nabla w=0,
\end{equation*}

     where $L_H$ is a specific elliptic operator. They established the global existence and uniqueness of this two-dimensional problem in bounded domains \cite{ET}. Some of their results can be extended to the whole space $\R^3$. The global existence has been proved by Bronzi–Lopes–Lopes \cite{BLN} for $L^1\bigcap L^p(\R^3)$ initial vorticity with compact support. The assumption of the compact support was then removed by Jiu, Li and Niu \cite{JLN}. However, the uniqueness of the solutions, continuous dependence on initial data and the vortex transport formula is still unknown in the whole space $\R^3$. Moreover, the Biot-Savart law in their work is given by
     $$BS[\Omega]=\nabla \wedge (-\Delta)_{\R^2\times \T}^{-1}\Omega+A,$$
     where the constant $A:=\Big(0,0,\frac{1}{4\pi^2}\int_{\R^2\times \T}\Omega^z(y)\,dy\Big)$ is used to ensuring that $BS[\Omega]$ has vanishing swirl.

       \quad In this paper, we will always use the Biot-Savart law\footnote{Theorem \ref{thm main}, Corollary \ref{co main} and Theorem \ref{thm conservation} remain valid if we set $$U(x)=-\frac{1}{4\pi}\int_{\R^3}\frac{x-y}{|x-y|^3}\wedge \Omega(y)\,dy+(0,0,A),$$ for any $A \in \R$.} given by \eqref{eq biot savart law}
     $$U(x)=-\frac{1}{4\pi}\int_{\R^3}\frac{x-y}{|x-y|^3}\wedge \Omega(y)\,dy,$$
     which decays at infinity when $\Omega \in C_c^{\infty}(\R^2\times \T)$.
Note that   $$U(x)=BS[\Omega](x)-A,$$ so the velocity field in our work does not has vanishing swirl. However, the main purpose of this paper is to establish sufficient technical tools for estimating the long-time dynamics of helical vortex filaments \cite{GZ1}. Therefore, we must assume the Biot-Savart law \eqref{eq biot savart law} and we do not concern with the helical swirl.
Now assume $\Omega^z$ is a helical solution to \eqref{eq 3euler z} in $\R^3$, then $\Omega:=(x_2,-x_1,1)\Omega^z$ satisfies the three-dimensional Euler equation. Since $\Omega^z$ is helical, it suffices to consider $w(x_1,x_2,t):=\Omega^z(x_1,x_2,0,t)$, which satisfies the two-dimensional helical Euler equation
\begin{equation}\label{eq 2euler}
\partial_t w +Hw\cdot \nabla w=0,
\end{equation}
where
\begin{equation*}
Hw(x)=\int_{\R^2} H(x,y)w(y)\,dy
\end{equation*}
and $H(x,y)$ is the modified Biot-Savart kernel defined in Proposition \ref{prop 3 euler}. We refer to the two-dimensional problem \eqref{eq 2euler} as the two-dimensional helical Euler equation and we refer to the equation \eqref{eq 3euler z} as the three-dimensional helical Euler equation.

  \quad The main result are stated as follows (for the
precise definition of Lagrangian weak solutions and the function spaces, we refer the reader to Section 2).
\begin{theorem}\label{thm main}
The two-dimensional helical Euler equation \eqref{eq 2euler} is globally well-posed in $L^1_1\bigcap L^{\infty}_1(\R^2)$ in the sense that
\begin{enumerate}
\item (Existence). For all $T>0$ and $w_0 \in L^1 \bigcap L^{\infty}(\R^2)$, there exists a Lagrangian weak solution $w(x,t)\in L^{\infty}\left([0,T],L^1 \bigcap L^{\infty}(\R^2)\right)$ to \eqref{eq 2euler}. Moreover, the velocity $Hw(\cdot,t)$ is locally Log-Lipschitz continuous.  \\
\item (Uniqueness). For any $w_0 \in L^1_1 \bigcap L^{\infty}_1(\R^2)$, there exists at most one lagrangian weak solution to \eqref{eq 2euler} in $L^{\infty}\left([0,T],L^1_1 \bigcap L^{\infty}_1(\R^2)\right)$ with initial vorticity $w_0$.\\
    \item (Continuous dependence on initial data). Let $w_{0,n}$ be a sequence of initial data such that
\begin{equation*}
\sup_n \|w_{0,n}\|_{L^{\infty}_1(\R^2)} < \infty
\end{equation*}
and
\begin{equation*}
\|w_{0,n}-w_0\|_{L^1_1(\R^2)} \to 0.
\end{equation*}
Let $w(t)$ and $w_n(t)$ be the solution to \eqref{eq 2euler} with initial data $w_0$ and $w_{0,n}$ respectively, then for any $T>0$, there holds
\begin{equation*}
\sup_{0\le t\le T} \|w_n(t)-w(t)\|_{L^1_1(\R^2)} \to 0.
\end{equation*}.
\end{enumerate}
\end{theorem}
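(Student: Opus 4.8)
The plan is to transplant the classical Yudovich theory to the helical setting, with the modified Biot--Savart operator $H$ of Proposition \ref{prop 3 euler} playing the role of the two-dimensional Newtonian velocity kernel. The cornerstone is that for $w \in L^1 \cap L^\infty(\R^2)$ the velocity $Hw$ is locally log-Lipschitz, i.e. there is an Osgood modulus $\phi(r)=r(1-\log r)$ on $0<r\le 1$ with $|Hw(x) - Hw(x')| \le C(\|w\|_{L^1 \cap L^\infty})\,\phi(|x-x'|)$ on compact sets; I expect this to follow by splitting $H(x,y)$ into its leading Newtonian singularity (handled exactly as in 2D Euler) and lower-order, locally smoother corrections. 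Granting this, the flow ODE $\dot X(\alpha,t) = Hw(X(\alpha,t),t)$, $X(\alpha,0)=\alpha$, admits a unique measure-preserving flow by the Osgood criterion, so a Lagrangian solution is well defined by $w(\cdot,t) := w_0 \circ X^{-1}(\cdot,t)$.

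For existence (i) I would regularize: mollify $w_0$ to obtain $w_{0,\e}\in C_c^\infty$, solve the smooth helical problem, and observe that the transport structure of \eqref{eq 2euler} conserves every $L^p$ norm, $\|w_\e(t)\|_{L^p}=\|w_{0,\e}\|_{L^p}$. The weighted norms are in turn controlled by a Gronwall argument, because the flow spreads mass at finite speed, $|X(\alpha,t)| \lesssim |\alpha| + \int_0^t \|Hw(\cdot,s)\|_{L^\infty}\,ds$. These uniform bounds, combined with the equicontinuity of the flows from the log-Lipschitz estimate and an Arzel\`a--Ascoli/compactness argument, let me extract a limit flow $X$ and vorticity $w$ solving \eqref{eq 2euler} in the Lagrangian sense, with $Hw$ retaining the local log-Lipschitz modulus by lower semicontinuity.

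The uniqueness part (ii) is where the weighted spaces $L^1_1 \cap L^\infty_1$ are essential, and I would argue by Lagrangian comparison in the spirit of Marchioro--Pulvirenti \cite{MaP} and Crippa--Stefani \cite{St}. Given two solutions $w_1,w_2$ with the same datum and flows $X_1,X_2$, I would control
\begin{equation*}
Q(t) := \int_{\R^2} |X_1(\alpha,t) - X_2(\alpha,t)|\,(1+|\alpha|)\,|w_0(\alpha)|\,d\alpha,
\end{equation*}
differentiating in $t$ and splitting $\dot X_1 - \dot X_2 = \big(Hw_1(X_1) - Hw_1(X_2)\big) + \big(Hw_1 - Hw_2\big)(X_2)$. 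The first bracket is bounded by the modulus $\phi$, producing the Osgood term $\int \phi(|X_1-X_2|)$; the second is a stability estimate for $H$ itself, since $w_1-w_2 = w_0\circ X_1^{-1} - w_0\circ X_2^{-1}$ is controlled by $|X_1-X_2|$. Here the weight $1+|\alpha|$ is precisely what absorbs the slow decay of the helical kernel at infinity, while the $L^\infty_1$ bound caps the constants, and an Osgood lemma then forces $Q\equiv 0$.

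Continuous dependence (iii) runs along the same comparison, now between the flow of $w$ and the flows of $w_n$: the bound $\sup_n \|w_{0,n}\|_{L^\infty_1}<\infty$ freezes the log-Lipschitz constants, the convergence $\|w_{0,n}-w_0\|_{L^1_1}\to 0$ enters as a small inhomogeneous forcing in the analogue of $Q(0)$, and the Osgood--Gronwall inequality upgrades this to $\sup_{[0,T]}\|w_n(t)-w(t)\|_{L^1_1}\to 0$. The main obstacle throughout is the far-field behavior of $H$: unlike the bounded-domain setting of Ettinger--Titi \cite{ET}, in $\R^2$ the kernel does not decay fast enough for the unweighted estimates to close, so the whole scheme hinges on showing, via the precise bounds of Proposition \ref{prop 3 euler}, that the weighted norms $L^1_1\cap L^\infty_1$ are both propagated by the flow and strong enough to control the $H$-stability estimate; verifying that these weighted bounds close is the crux of the argument.
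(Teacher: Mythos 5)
Your parts (i) and (iii) are broadly in line with what the paper does: existence by approximating the data, conservation of the $L^p$ norms, finite radial speed of the flow (since $x\cdot H_2w(x)=0$), and Arzel\`a--Ascoli compactness on the flows; and continuous dependence is obtained in the paper by a compactness/contradiction argument once uniqueness is known, rather than by a quantitative Osgood--Gronwall bound, so your (iii) would follow the same way. The genuine gap is in part (ii), and it sits exactly at the point you yourself flag as ``the crux''. You bound the first bracket $Hw_1(X_1)-Hw_1(X_2)$ ``by the modulus $\phi$'', but for the Biot--Savart law used in this paper the swirl does not vanish, and $Hw=H_1w+(-x_2,x_1)U^3(x',0)$. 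Consequently, even for $w\in L^1_1\cap L^{\infty}_1$, the best available estimate (this is what \eqref{eq hw(x)-hw(z) 1}, \eqref{eq hw(x)-hw(z) 3} and Lemma \ref{le sec3 A0} give) is $|Hw(x)-Hw(z)|\lesssim(\la x\ra+\la z\ra)F(|x-z|)$: the Log-Lipschitz ``constant'' grows linearly at infinity because of the rigid-rotation factor $(-x_2,x_1)$, and the cancellation $(x-z)\cdot(x-z)^{\perp}=0$ only replaces $\la x\ra+\la z\ra$ by $\min(\la x\ra,\la z\ra)$, it does not remove the factor. Since $\la X_i(\alpha,t)\ra\approx\la\alpha\ra$, inserting this into $\dot Q$ produces $\int\la\alpha\ra^{2}|w_0(\alpha)|F(|X_1-X_2|)\,d\alpha$: one power of $\la\alpha\ra$ from the modulus and one from the weight already inside $Q$. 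Jensen's inequality then controls $\dot Q$ by $F$ of the \emph{weight-two} distance, not of $Q$; redoing the argument with weight $\la\alpha\ra^{2}$ costs $\la\alpha\ra^{3}$, and so on. The scheme never closes in any fixed $L^1_m\cap L^{\infty}_m$, so Osgood's lemma cannot be invoked. This escalation of weights, not the far-field decay of the kernel per se, is the true obstruction (it is also why the paper's own method fails for $m=0$).

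The paper's resolution is structural rather than a sharper weighted estimate: it lifts the problem to three dimensions, where the rotation is absorbed into the helical symmetry. Every 2D weak solution in $L^{\infty}([0,T],L^1_1\cap L^{\infty}_1)$ is automatically Lagrangian (Lemma \ref{le weak solution is lagrangian}), lifts to a Lagrangian weak solution of \eqref{eq 3euler z} via $\Omega^z(x',x_3,t)=w(R_{-x_3}x',t)$ (Lemmas \ref{le sec4 lagrangian} and \ref{le sec4 A0}), with the explicit flow correspondence $Y(\alpha',0,t)=(R_{X_3}(X_1,X_2),X_3)$, and this lifting is injective. In 3D the velocity is globally bounded, $|U|\lesssim\|\Omega^z\|_{L^1_1\cap L^{\infty}_1}$ by \eqref{eq bound for U in R^3}, and globally Log-Lipschitz by \eqref{eq sec3 A4} --- with no $\la x\ra$ factor --- so the weighted distance $D(t)=\int|X-\tilde X|\,\la\alpha\ra\,|\Omega^z_0|\,d\alpha$ satisfies $D\lesssim\int_0^t F(D)\,ds$ and Osgood applies (Theorem \ref{thm uniqueness in R3}); the weight $\la\alpha\ra$ is needed only in the kernel-stability term, where \eqref{eq sec3 A5} produces it exactly once, matching the weight in $D$. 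If you insist on a purely two-dimensional comparison, you would have to exhibit a cancellation that kills the linear factor coming from $z^{\perp}\bigl(U^3(x)-U^3(z)\bigr)$; the paper's lifting is precisely the clean way of doing that, since conjugating by the rotation $R_{X_3}$ turns the troublesome swirl term into motion along the helical direction.
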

\emph{Comments on the results:}
\begin{enumerate}
    \item   The assumption of Lagrangian in (ii) can be removed. Since the velocity field $Hw$ is divergence free, Lemma \ref{le uniqueness of the transport equation} guarantees that all weak solutions to \eqref{eq 2euler} in $L^{\infty}\left([0,T],L^1_1 \bigcap L^{\infty}_1(\R^2)\right)$ are indeed Lagrangian.
   \item The solution $w$ to \eqref{eq 2euler} with $w_0 \in L^1_1 \bigcap L^{\infty}_1(\R^2)$ belongs to $C\left([0,T],L^1_1(\R^2)\right)$. This can be proved easily with the help of the vortex transport formula.
   \item  The global well-posedness for \eqref{eq 2euler} in $L^1_m \bigcap L^{\infty}_m(\R^2)$ can be obtained for any $m \ge 1$ by a similar argument. But the global existence of weak solutions to \eqref{eq 2euler} can be established only when $m=0$. Indeed, the continuous dependence on initial data still holds by the same argument in Section 6 once the uniqueness is obtained. However, our method could not be applied to prove the uniqueness when $m=0$. The main difficulty is that $Hw(x)$ is only locally Log-Lipschitz continuous when $m=0$ instead of globally Log-Lipschitz continuous when $m=1$. So we could not get the desired bound on our distance function $D(t)$. For the locally Log-Lipschitz continuous velocity field, Clop, Jylh$\ddot{a}$, Mateu, and Orobitg proved the uniqueness of continuity equations. We refer the reader to \cite{Clop} for references.
   \end{enumerate}

   The well-posedness for \eqref{eq 3euler z} can also be established by a similar proof to that of Theorem \ref{thm main}. Indeed, the proof for \eqref{eq 3euler z} is even easier since the velocity remains bounded in \eqref{eq 3euler z} while the velocity may grow linearly in \eqref{eq 2euler}.

\begin{corollary}\label{co main}
The three-dimensional helical Euler equation \eqref{eq 3euler z} is globally well-posed in $L^1_1\bigcap L^{\infty}_1(\R^3)$.
\end{corollary}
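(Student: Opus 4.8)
The plan is to run the same three-step scheme (existence by approximation, uniqueness via a distance-function estimate, and continuous dependence by a stability estimate) that establishes Theorem \ref{thm main}, exploiting the fact that the transporting velocity in \eqref{eq 3euler z} is the genuine three-dimensional field $U$ given by \eqref{eq biot savart law}, rather than the reduced field $Hw$. First I would record the reduction. Because $\Omega^z$ is helical it is completely determined by its trace $w(x_1,x_2,t):=\Omega^z(x_1,x_2,0,t)$, and conversely any $w$ lifts to a helical function on $\R^3$; by the computation behind Proposition \ref{prop 3 euler}, $\Omega^z$ is a Lagrangian weak solution of \eqref{eq 3euler z} if and only if $w$ is a Lagrangian weak solution of \eqref{eq 2euler}. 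I would then check that this correspondence is a norm-equivalence: a helical $\Omega^z$ lies in $L^1_1\bigcap L^{\infty}_1$ (measured over a period cell $\R^2\times\T$) precisely when $w\in L^1_1\bigcap L^{\infty}_1(\R^2)$, with comparable norms. This converts every assertion of Corollary \ref{co main} into the corresponding assertion of Theorem \ref{thm main}, so existence and the weak-to-Lagrangian upgrade (Lemma \ref{le uniqueness of the transport equation}) transfer immediately.

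For a self-contained argument I would instead repeat the proof of Theorem \ref{thm main} with $Hw$ replaced by $U$. The one input that must be re-established is the regularity of the velocity: I would show that for $\Omega^z\in L^1_1\bigcap L^{\infty}_1$ the field $U$ from \eqref{eq biot savart law} is bounded and globally Log-Lipschitz. This is exactly where the simplification enters. Whereas $Hw$ only grows linearly and is merely locally Log-Lipschitz, the full Biot-Savart integral \eqref{eq biot savart law} produces a uniformly bounded, globally Log-Lipschitz $U$, because the linear-growth correction generated by the helical generator $(x_2,-x_1,1)$ is absent in the genuine three-dimensional advection.

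Granting this, the flow map $X(\alpha,t)$ is well defined, the vortex transport formula $\Omega^z(X(\alpha,t),t)=\Omega^z(\alpha,0)$ holds, and the distance function $D(t)$ used in the uniqueness and stability proofs satisfies an Osgood inequality with an even better modulus than in the two-dimensional case. Osgood/Gronwall then yields $D(t)\equiv 0$ (uniqueness), and the convergence $\sup_{0\le t\le T}\|w_n(t)-w(t)\|_{L^1_1}\to 0$ carries over unchanged (continuous dependence on initial data).

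The main obstacle I anticipate is not an estimate but the bookkeeping of the reduction: one must verify that helical symmetry is genuinely propagated by the (only Log-Lipschitz) flow, so that the trace-and-lift correspondence between \eqref{eq 3euler z} and \eqref{eq 2euler} is a bijection on the relevant solution classes, and that the velocity comparison $U\leftrightarrow Hw$ respects the weak formulation. Once the boundedness and global Log-Lipschitz continuity of $U$ are in hand, every remaining step is strictly easier than its two-dimensional counterpart, so no new analytic difficulty arises.
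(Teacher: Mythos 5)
Your proposal is correct and takes essentially the same approach as the paper: the paper's own justification of Corollary \ref{co main} is precisely that the proof of Theorem \ref{thm main} can be repeated for \eqref{eq 3euler z} and is in fact easier, because for $\Omega^z\in L^1_1\bigcap L^{\infty}_1$ the three-dimensional velocity $U$ is bounded (estimate \eqref{eq bound for U in R^3}) and globally Log-Lipschitz (estimate \eqref{eq sec3 A4}), which is exactly the simplification you identify, and the supporting machinery (the Osgood distance-function argument of Theorem \ref{thm uniqueness in R3} and the lifting lemmas of Section 4.2) is already in place. The only caveat is that the paper's logical order is the reverse of your ``transfer'' route -- it proves uniqueness directly in 3D and deduces the 2D statement from it via the 2D-to-3D lifting, so if you instead transfer Theorem \ref{thm main} to 3D you must also supply the 3D-to-2D trace direction for weak solutions (which the paper never writes down, though it follows by reversing the identity \eqref{eq sec4 A9}); your self-contained route avoids this entirely and coincides with what the paper actually does.
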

By a direct calculation, we deduce the conservation of the second momentum $M_1(t)$ and the energy $E(t)$. The conservation of $M_1(t)$ will lead directly to the existence of straight vortex filament in helical setting, a result that was previously only known in the context of Euler's equation with planar symmetry. Furthermore, the energy $E(t)$ will be used to control the velocity for the helical vortex filament, see \cite{GZ1} for a detailed description.

\begin{theorem}\label{thm conservation}Let $\Omega^z \in L^1_2\bigcap L^{\infty}_2(\R^2\times \T)$ be a helical solution to \eqref{eq 3euler z}. Then the second momentum
$$
M_1(t):=\int_{\R^2\times\T} (x_1^2+x_2^2)\Omega^z(x,t)\,dx
$$
and the (pseudo) energy
$$
E(t):=\int_{\R^2\times \T} (-\Delta_{\R^2\times \T})^{-1}\Omega \cdot \Omega \,dx
$$
are conserved, where $\Omega(x,t):=(x_2,-x_1,1)\Omega^z(x,t)$ and the operator $(-\Delta_{\R^2\times \T})^{-1}$ is defined in next section.

\end{theorem}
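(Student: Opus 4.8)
The plan is to establish each conservation law by reducing it to the statement that a time derivative vanishes, carrying out the computation first for smooth, rapidly decaying solutions and then passing to the limit. Since the solutions produced by Theorem \ref{thm main} and Corollary \ref{co main} arise as limits of smooth solutions, and the weight-$2$ assumption $\Omega^z \in L^1_2 \cap L^{\infty}_2$ supplies exactly the integrability needed to control the $(x_1^2+x_2^2)$ weight in $M_1$ and the quadratic integral in $E$, I would first approximate $\Omega^z(\cdot,0)$ by smooth, compactly supported helical data, verify the identities for the corresponding smooth solutions, and then use continuous dependence together with uniform weighted bounds to conclude. The heart of the matter is the formal computation; all integrations by parts carry no boundary contributions because of periodicity in the $\T$-direction and the decay in $\R^2$ furnished by the weighted integrability.

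For $M_1$, I would differentiate under the integral sign and use the transport equation \eqref{eq 3euler z} together with $\nabla\cdot U=0$ to write
\begin{equation*}
\frac{d}{dt}M_1(t)=-\int_{\R^2\times\T}(x_1^2+x_2^2)\,\nabla\cdot(U\,\Omega^z)\,dx=2\int_{\R^2\times\T}(x_1U^1+x_2U^2)\,\Omega^z\,dx ,
\end{equation*}
so it remains to show the right-hand side vanishes. Expressing the horizontal velocity through the modified Biot-Savart kernel $H(x,y)$ of Proposition \ref{prop 3 euler}, this becomes a double integral against $\Omega^z(x)\Omega^z(y)$; symmetrizing in the two integration variables and invoking the antisymmetry of $H$ inherited from the symmetry of the underlying Green's function — precisely as the moment of inertia $\int|x|^2 w$ is conserved for planar Euler — makes the symmetrized integrand vanish pointwise. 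The weight $2$ is what guarantees the absolute convergence legitimizing the symmetrization.

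For the energy $E(t)$, I would use that $(-\Delta_{\R^2\times\T})^{-1}$ is self-adjoint to get $\frac{d}{dt}E(t)=2\int\Psi\cdot\partial_t\Omega\,dx$ with $\Psi:=(-\Delta_{\R^2\times\T})^{-1}\Omega$. Writing the vorticity equation \eqref{eq 3euler} in the form $\partial_t\Omega=\nabla\wedge(U\wedge\Omega)$ (valid since $\nabla\cdot U=\nabla\cdot\Omega=0$) and integrating by parts yields
\begin{equation*}
\frac{d}{dt}E(t)=2\int_{\R^2\times\T}(\nabla\wedge\Psi)\cdot(U\wedge\Omega)\,dx=2\int_{\R^2\times\T}U\cdot(U\wedge\Omega)\,dx=0 ,
\end{equation*}
the last equality being the vanishing of a scalar triple product with a repeated factor. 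Here I use the key fact that, with the convention \eqref{eq biot savart law}, one has $U=\nabla\wedge\Psi$ exactly: indeed $\nabla\cdot\Psi=(-\Delta_{\R^2\times\T})^{-1}\nabla\cdot\Omega=0$, so $\nabla\wedge\Psi$ is divergence-free with curl $\Omega$, and it coincides with $U=BS[\Omega]-A=\nabla\wedge\Psi$ as the additive constant $A$ cancels. Were one instead to use the velocity $U+(0,0,A)$ of the footnote, the extra contribution would equal $A\,\frac{d}{dt}M_1(t)$, which vanishes by the conservation of $M_1$ just established.

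The main obstacle I anticipate is not the algebra but its rigorous justification for merely weak solutions: one must verify that the weighted integrals defining $M_1$ and $E$ are finite and differentiable in $t$, that the integrations by parts produce no boundary terms, and that the smooth approximations converge strongly enough — in the weighted norms, not only in $L^1_1$ — to pass to the limit. Securing the uniform-in-$n$ control of $\|\Omega^z_n(t)\|_{L^1_2\cap L^{\infty}_2}$ along the approximating sequence is the delicate point, since the velocity may grow linearly and the flow map can spread mass outward; together with the precise antisymmetry identity for the helical kernel $H$, which is the genuinely new ingredient relative to the classical planar case, this is where the work lies.
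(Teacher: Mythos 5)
Your proposal is correct and takes essentially the same route as the paper: for $M_1$ the paper likewise differentiates, reduces to a double integral against $\Omega^z(x)\Omega^z(y)$, and symmetrizes, killing the kernel $\mathfrak{g}(x,y)=(x',0)\cdot\nabla\mathbf{G}(x-y)\wedge\xi(y)$ through the identities \eqref{eq sec2 C3} and \eqref{eq sec2 C4} for the periodic Green's function (note the needed fact is the combined identity $\mathfrak{g}(x,y)+\mathfrak{g}(y,x)\equiv 0$, not plain antisymmetry of the kernel alone — exactly as in your planar analogy where one also uses $z\cdot z^{\perp}=0$), while for $E$ the paper compresses into one line ("multiply by $(-\Delta_{\R^2\times\T})^{-1}$ and integrate") the standard computation you spell out via $U=\nabla\wedge\Psi$. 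The rigorous approximation step for weak $L^1_2\bigcap L^{\infty}_2$ solutions that you flag as the delicate point is also omitted in the paper, which invokes "a similar approximation argument" without details.
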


  \quad Now we state our strategy of the proof. First, we show that every weak solution to \eqref{eq 2euler} in $L^1_1 \bigcap L^{\infty}_1(\R^2)$ can be lifted to a weak solution to \eqref{eq 3euler z} in $L^1_1 \bigcap L^{\infty}_1(\R^2\times \T)$ and such lifting is bijective. So in order to prove Theorem \ref{thm main}, it suffices to show the existence of weak solutions and the continuous dependence on initial data for the two-dimensional helical Euler equation \eqref{eq 2euler} and the uniqueness of weak solutions to \eqref{eq 3euler z} in $L^1_1 \bigcap L^{\infty}_1(\R^2\times \T)$. In our proof, the following observation is very important. Let $X(\alpha_1,\alpha_2,t)=(X_1(\alpha_1,\alpha_2,t),X_2(\alpha_1,\alpha_2,t))$ be the particle trajectory map associated with the two-dimensional modified Biot-Savart kernel $Hw$. Define
\begin{equation*}
X_3(\alpha_1,\alpha_2,t)=\int_0^t U_3(X_1(\alpha_1,\alpha_2,s),X_2(\alpha_1,\alpha_2,s),0)\,ds
\end{equation*}
and
\begin{equation*}
Y(\alpha_1,\alpha_2,0,t)=(R_{X_3}(X_1,X_2),X_3).
\end{equation*}
Then
\begin{equation*}
Y(\alpha_1,\alpha_2,\alpha_3,t):=S_{\alpha_3}Y(R_{-\alpha_3}(\alpha_1,\alpha_2),0,t)
\end{equation*}
is the particle trajectory map associated with the three-dimensional modified Biot-Savart kernel $U$.

    \quad \emph{\bf Global existence of \eqref{eq 2euler}:} First we estimate the two-dimensional modified Biot-Savart kernel $H(x,y)$ and we show that it admits the decomposition
     $$H(x,y)=H_1(x,y)+H_2(x,y)$$
     for $H_2(x,y)$ a purely rotational part satisfying $(x_1,x_2)\cdot H_2(x,y)=0$,
    which has no contribution to the growth of the trajectory. So we may focus our attention on $H_1(x,y)$. By splitting the integral of $H_1(x,y)$ into different regions, we prove that
     $$|H_1(x,y)| \lesssim \frac{\la x \ra}{|x-y|} \quad \text{and} \quad |H_1(x,y)-H_1(z,y)| \lesssim \frac{\la x \ra |x-z|}{|x-y||z-y|}, $$
     which behaves like the two-dimensional Biot-Savart kernel $\frac{(x-y)^{\perp}}{|x-y|^2}$, except for a linear growth at infinity. Next, we use the Picard iteration to prove the global existence of weak solutions for smooth initial data. Let $w_0 \in C_{c}^{\infty}(\R^2)$ and set $w^0(x,t):=w_0(x)$. Then we define $w^n(x,t)$ inductively:
\begin{equation*}
\left\{\begin{aligned}\partial_t w^{n+1}+Hw^n \cdot \nabla w^{n+1}=&0 \\
w^{n+1}(x,0)=&w_0(x). \end{aligned}\right.
\end{equation*}
We remark that following the standard compactness argument in \cite{MaB}, one can find a subsequence $n_k$ such that
$$w_{n_k} \to w, \quad  \text{and} \quad w_{n_k+1} \to \tilde{w} $$
for bounded measurable functions $w$ and $\tilde{w}$ with compact support in $\R^2$. So letting $k \to \infty$, there holds
$$
\partial_t w+ H\tilde{w}\cdot \nabla w=0
$$
and it remains to prove that $w = \tilde{w}$. To this end, we introduce the distance function
$$
D_n(t):=\int_{\R^2}\left| X^n(\alpha,t)-X^{n+1}(\alpha,t) \right||w_0(\alpha)|\,d\alpha.
$$
We will see that $D^*(t):= \limsup_n D_n(t)$ satisfies the integral inequality
$$
D^*(t) \lesssim \int_0^t F(D^*(s))\,ds
$$
for a Log-Lipschitz continuous function $F$. Thus, $X^n$ is a Cauchy sequence in $L^1(|w_0|dx)$, which implies $X=\tilde{X}$ and hence $w=\tilde{w}$.  Now for general initial data $w_0 \in L^1\bigcap L^{\infty}(\R^2)$, we choose a sequence of $w_{0,n} \in C_c^{\infty}$ such that $w_{0,n} \to w_0$ in $L^1$ and $|w_{0,n}(x)|\lesssim 1$. Then letting $n \to \infty$ we find the desired solution to \eqref{eq 2euler}.

      \quad \emph{\bf Uniqueness:} Let $\Omega$ and $\tilde{\Omega}$ be two weak solutions to the three-dimensional helical Euler equation \eqref{eq 3euler z} and let $X(\alpha,t)$, $\tilde{X}(\alpha,t)$ be their particle trajectory maps, respectively. Then motivated by the uniqueness proof of Crippa and Stefani \cite{St}, we introduce the distance function
      $$D(t):=\int_{\R^2\times \T} |X(\alpha,t)-\tilde{X}(\alpha,t)||\Omega_0(\alpha)|\,d\alpha.$$
        We show that the distance function $D(t)$ satisfies an integral inequality
        $$D(t)\lesssim \int_0^t F(D(s)) \,ds$$
        for an Osgood continuous function $F$. Hence the uniqueness follows from Osgood's Lemma.

          \quad \emph{\bf Continuous dependence on initial data:} With the help of the vortex transport formula, this can be proved via a compactness argument.

\quad This paper is aimed to established sufficient technical tools which will be needed in \cite{GZ1}. There are several innovations in our paper. First,  we prove the global well-posedness for \eqref{eq 2euler} in the whole space $\R^2$, the velocity field given by \eqref{eq biot savart law} has constant constant helical swirl $$U_{\xi}(x)\equiv-\frac{1}{4\pi^2}\int_{\R^2\times \T}\Omega^z_0(y)\,dy.$$ This is an extension of the result in \cite{ET}, which proved the global existence and uniqueness for helical Euler equations in bounded domains with vanishing helical swirl.  Second, the vortex transport formula is obtained, which is unknown before even in bounded domains. The vortex transport formula will play a crucial role in the vortex filament problems. Third,  a rigorous justification of the equivalence between the two-dimensional helical Euler equation \eqref{eq 2euler} and the three-dimensional helical Euler equation \eqref{eq 3euler z} is provided. Moreover, the relationship between their particle trajectory maps is also obtained in our article. Finally, we get a detailed estimates for the modified Biot-Savart kernel associated with the helical Euler equation. The estimate does not require any $W^{1,p}$ regularity for the velocity, so the proof we applied here can be easily extended to more generalized Biot-Savart kernels.

\quad Our article is organized as follows: In Section $2$, we give a detailed description of our problems and reduce the three-dimensional incompressible Euler equation \eqref{eq 3euler} to the two-dimensional helical Euler equation \eqref{eq 2euler}. In Section $3$, we obtain detailed estimates for the modified Biot-Savart kernel $H(x,y)$. In Section $4$, we show the uniqueness of the helical Euler equation \eqref{eq 3euler z} and \eqref{eq 2euler} in $L^1_1 \bigcap L^{\infty}_1$. In Section $5$, we obtain the global existence and the vortex transport formula for \eqref{eq 2euler}. In Section $6$, we prove the continuous dependence on initial data to equation \eqref{eq 3euler z} and \eqref{eq 2euler}.

\section{Mathematical preliminaries and main result. }
The main purpose of this section is to fix notations and state our main results. For simplicity of presentation, we usually refer a point $x=(x_1,x_2,x_3) \in \R^3$ or $\R^2\times \T$ to $x=(x',x_3)$.
\subsection{Function spaces}

 In this subsection, we will introduce some functional spaces that will be used later.
The three-dimensional weighted $L^p_m(\R^2 \times \T)$ norm is defined by
\begin{equation*}
\|f\|_{L^p_m(\R^2\times \T)}:=\int_{\R^2\times \T}(1+y_1^2+y_2^2)^{\frac{pm}{2}} |f(y_1,y_2,y_3)|^p\,dy^{\frac{1}{p}}
\end{equation*}
for $1\le p< \infty$ and
$$\|f\|_{L^{\infty}_m(\R^2\times \T)}:=\sup_{x\in \R^3}(1+x_1^2+x_2^2)^{\frac{m}{2}} |f(x_1,x_2,x_3)|.$$
Similarly, the two-dimensional weighted $L^p_m(\R^2)$ norm is defined by
\begin{equation*}
\|g\|_{L^p_m(\R^2)}:=\int_{\R^2}\la y \ra^{pm} |g(y_1,y_2)|^p\,dy^{\frac{1}{p}}
\end{equation*}
for $1\le p< \infty$ and
$$\|f\|_{L^{\infty}_m(\R^2)}:=\sup_{x\in \R^2}\la x \ra^{m} |f(x_1,x_2)|.$$
Here and throughout the paper we use the notation $\la a\ra=(1+|a|^2)^{\frac12}$.
The norms $L^p_m(\R^2 \times \T)$ and $L^p_m(\R^2)$ are related as follows:
\begin{lemma}\label{le norm of R^2 and R^3}
Let $f \in L^p_m(\R^2)$ and define $F: \R^3 \to \R$ as
\begin{equation*}
F(x',x_3):=f(R_{-x_3}x').
\end{equation*}
Then for any $p \in [1,+\infty]$,
\begin{equation*}
\|f\|_{L^p_m(\R^2)}=(2\pi)^{-1/p}\|F\|_{L^p_m(\R^2 \times \mathbb{T})}.
\end{equation*}
\end{lemma}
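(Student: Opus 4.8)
The plan is to reduce the identity to two elementary facts about the rotation $R_{-x_3}$ acting on $\R^2$: it preserves the Euclidean norm, so that $|R_{-x_3}x'|=|x'|$ and hence $\la R_{-x_3}x'\ra=\la x'\ra$, and it is volume-preserving, having unit Jacobian. The function $F(x',x_3)=f(R_{-x_3}x')$ is precisely the helical lift of $f$, and the weight $\la x'\ra^{m}$ appearing in both norms depends only on $|x'|$, which is exactly the quantity left invariant by $R_{-x_3}$. This matching of the invariance of the weight with the invariance of the measure is what makes the identity hold with no loss.

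First I would treat the case $1\le p<\infty$. By the definition of the three-dimensional weighted norm and Fubini's theorem,
\begin{equation*}
\|F\|_{L^p_m(\R^2\times\T)}^p=\int_{\T}\int_{\R^2}\la x'\ra^{pm}\,|f(R_{-x_3}x')|^p\,dx'\,dx_3.
\end{equation*}
For each fixed $x_3\in\T$ I would change variables $y=R_{-x_3}x'$ in the inner integral. Since $R_{-x_3}$ is a rotation we have $dx'=dy$ and $\la x'\ra=\la y\ra$, so the inner integral equals $\int_{\R^2}\la y\ra^{pm}|f(y)|^p\,dy=\|f\|_{L^p_m(\R^2)}^p$, a quantity independent of $x_3$. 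Integrating this constant over $\T$, whose total length is $2\pi$, gives $\|F\|_{L^p_m(\R^2\times\T)}^p=2\pi\,\|f\|_{L^p_m(\R^2)}^p$, and taking $p$-th roots yields the asserted identity.

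Next I would handle $p=\infty$ by the same substitution. For each fixed $x_3$, as $x'$ ranges over $\R^2$ the point $y=R_{-x_3}x'$ ranges bijectively over $\R^2$, so
\begin{equation*}
\sup_{x'\in\R^2}\la x'\ra^{m}\,|f(R_{-x_3}x')|=\sup_{y\in\R^2}\la y\ra^{m}\,|f(y)|=\|f\|_{L^\infty_m(\R^2)},
\end{equation*}
again independent of $x_3$; taking the supremum over $x_3\in\T$ does not change the value. Under the convention $1/\infty=0$ we have $(2\pi)^{-1/\infty}=1$, so this matches the stated formula and confirms consistency with the finite-$p$ case in the limit $p\to\infty$.

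I do not expect any serious obstacle: the lemma is a direct computation. The only two points deserving a word of care are the measure-preserving property of $R_{-x_3}$, which holds because rotations are orthogonal with determinant one, and the normalization of $\T$, namely that we regard $\T=\R/2\pi\Z$ with total mass $2\pi$; it is exactly this normalization that produces the factor $(2\pi)^{-1/p}$.
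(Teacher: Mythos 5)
Your proof is correct and follows essentially the same route as the paper: a change of variables $y = R_{-x_3}x'$ exploiting that rotations preserve both the Lebesgue measure and the weight $\la x'\ra$, followed by integration of the resulting constant over $\T$ to produce the factor $2\pi$. The only difference is cosmetic, in that you spell out the $p=\infty$ case which the paper dismisses as immediate from the definition.
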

\begin{proof}
The proof follows directly by definition when $p=\infty$, so it suffices to consider the case when $1 \le p <\infty$. In fact,
\begin{equation*}\begin{aligned}
\|F\|_{L^p_m(\R^2 \times \mathbb{T})}^p &= \int_{\mathbb{T}} \int_{\R^2} (1+y_1^2+y_2^2)^{\frac{mp}{2}} |f(R_{-y_3}(y_1,y_2))|^p\,dy_1dy_2 \,dy_3 \\
&= \int_{\mathbb{T}} \int_{\R^2} (1+z_1^2+z_2^2)^{\frac{mp}{2}} |f(z_1,z_2)|^p\,dz_1dz_2 \,dy_3 \\
&=2\pi \|f\|_{L^p_m(\R^2)}^p.
\end{aligned}\end{equation*}

\end{proof}

\subsection{Helical functions and vector fields. }

\begin{definition}[helical function]\label{def helical function}
A function $f: \R^3 \to \R$ is called helical if for all $\theta \in \R$,
\begin{equation*}
f(S_{\theta}x)=f(x)
\end{equation*}
for almost every $x\in \R^3$, where
\begin{equation*}
S_{\theta}x:=R_{\theta}x+\begin{pmatrix} 0 \\ 0 \\ \theta \end{pmatrix}
\end{equation*}
and
\begin{equation}\label{eq def of R theta}
R_{\theta}:= \begin{pmatrix}  \cos{\theta} & \sin{\theta} & 0 \\ -\sin{\theta} & \cos{\theta} & 0 \\ 0 & 0 &1 \end{pmatrix}.
\end{equation}
\end{definition}

\begin{definition}[helical vector field]\label{def helical vector field}
A vector field $u: \R^3 \to \R^3$ is called helical if for all $\theta \in \R$,
\begin{equation*}
R_{-\theta}u(S_{\theta}(x))=u(x)
\end{equation*}
for almost every $x \in \R^3$.

\end{definition}
 Assume $f$ is a continuous helical function and $u$ is a continuous helical vector field. With a slight abuse of notation, we set $R_{\theta}x=\begin{pmatrix}  \cos{\theta} & \sin{\theta}  \\ -\sin{\theta} & \cos{\theta} \end{pmatrix} \begin{pmatrix} x_1 \\ x_2 \end{pmatrix}.$ Then by definitions above we see that
\begin{equation}\label{eq helical function}
f(x_1,x_2,x_3)=f(R_{-x_3}(x_1,x_2),0)
\end{equation}
and
\begin{equation}\label{eq helical vector}
u(x_1,x_2,x_3)=R_{x_3}u(R_{-x_3}(x_1,x_2),0).
\end{equation}

Now if $f$ and $u$ are only measurable function (vector field), then $f(x',0)$ and $u(x',0)$ are not well-defined, thus \eqref{eq helical function} and \eqref{eq helical vector} do not make sense. However, we have the following:
\begin{lemma}
Let $f$ be a locally bounded helical function and $u$ be a locally bounded helical vector field. Define
$$
\tilde{f}(x')=\frac{1}{2\pi}\int_0^{2\pi} f(R_{a}x',a) \,da
$$
and
$$
\tilde{u}(x')=\frac{1}{2\pi}\int_0^{2\pi} R_{-a}u(R_{a}x',a) \,da.
$$
Then for almost every $x\in \R^3$, there hold
\begin{equation}\label{eq f(x1,x2,0)}
f(x',x_3)=\tilde{f}(R_{-x_3}x')
\end{equation}
and
\begin{equation}\label{eq u(x1,x2,0)}
u(x',x_3)=R_{x_3}\tilde{u}(R_{-x_3}x').
\end{equation}

\end{lemma}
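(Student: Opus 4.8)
The plan is to reduce both identities to the defining symmetry by a change of variables, leaving the measure-theoretic handling of the exceptional null sets as the only real point. At the formal level the computation is immediate. Write $x=(x',x_3)$ and recall that $S_\theta x=(R_\theta x',x_3+\theta)$ together with the group law $R_\alpha R_\beta=R_{\alpha+\beta}$. Then
\begin{equation*}
\tilde f(R_{-x_3}x')=\frac{1}{2\pi}\int_0^{2\pi}f(R_{a-x_3}x',a)\,da=\frac{1}{2\pi}\int_0^{2\pi}f(S_{a-x_3}x)\,da,
\end{equation*}
so if the helical relation $f(S_\theta x)=f(x)$ held for \emph{every} $\theta$ the integrand would be the constant $f(x)$ and \eqref{eq f(x1,x2,0)} would follow at once. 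The vector-field case is identical: $R_{x_3}\tilde u(R_{-x_3}x')=\frac{1}{2\pi}\int_0^{2\pi}R_{x_3-a}u(S_{a-x_3}x)\,da$, and the helical relation for $u$ together with $R_{x_3-a}R_{a-x_3}=\Id$ gives $R_{x_3-a}u(S_{a-x_3}x)=R_{x_3-a}R_{a-x_3}u(x)=u(x)$.

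The hypothesis, however, only provides for each fixed $\theta$ the relation for almost every $x$, so I would first upgrade it to a joint statement. Set $h(\theta,x):=|f(S_\theta x)-f(x)|$. The map $\Phi(\theta,x):=(\theta,S_\theta x)$ is a smooth diffeomorphism of $\R\times\R^3$ with smooth inverse $(\theta,y)\mapsto(\theta,S_{-\theta}y)$; since $\Phi$ and $\Phi^{-1}$ are locally Lipschitz they carry Lebesgue null sets to null sets and Borel sets to Borel sets, hence $(\theta,x)\mapsto f(S_\theta x)$ is Lebesgue measurable and so is $h$. For every $\theta$ one has $h(\theta,\cdot)=0$ almost everywhere, whence $\int_{-N}^N\int_{B_R}h\,dx\,d\theta=0$ for every $N$ and every ball $B_R$, the integral being finite by the local boundedness of $f$. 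Tonelli's theorem then yields $\int_{B_R}\int_{-N}^N h(\theta,x)\,d\theta\,dx=0$, so for almost every $x$ we get $h(\theta,x)=0$ for almost every $\theta\in[-N,N]$. Exhausting $\R^3$ and $\R$ by countable families of balls $B_R$ and intervals $[-N,N]$, I conclude that for almost every $x\in\R^3$ the relation $f(S_\theta x)=f(x)$ holds for almost every $\theta\in\R$.

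It remains to combine the two steps. Fix a point $x=(x',x_3)$ of the full-measure set produced above, and note that the integral defining $\tilde f$ converges because $a\mapsto f(R_ax',a)$ is bounded on $[0,2\pi]$ by local boundedness. In the displayed change of variables the set of $a\in[0,2\pi]$ for which $a-x_3$ is exceptional is null, so $f(S_{a-x_3}x)=f(x)$ for almost every such $a$, and the integral equals $f(x)$; this is \eqref{eq f(x1,x2,0)}. The identity \eqref{eq u(x1,x2,0)} follows verbatim after replacing $h$ by $|u(S_\theta x)-R_\theta u(x)|$ and invoking the first-paragraph computation with the rotations inserted. The only genuine obstacle is the interchange of quantifiers ``for all $\theta$, a.e.\ $x$'' into ``a.e.\ $x$, for a.e.\ $\theta$'', which hinges on the joint measurability of $(\theta,x)\mapsto f(S_\theta x)$ and an application of Tonelli; once these are secured, everything else is the elementary computation of the first paragraph.
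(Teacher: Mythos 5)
Your proof is correct and rests on essentially the same mechanism as the paper's: a Fubini--Tonelli argument in the joint variable $(\theta,x)$, the change of variables $a=b+x_3$, and the per-angle a.e.\ symmetry of $f$. The paper packages this as showing $\int_{B_r}\bigl|f(x',x_3)-\tilde f(R_{-x_3}x')\bigr|\,dx=0$ for every $r>0$, whereas you first upgrade the hypothesis to ``for a.e.\ $x$, $f(S_\theta x)=f(x)$ for a.e.\ $\theta$'' and then evaluate the angular integral pointwise; your explicit verification of the joint measurability of $(\theta,x)\mapsto f(S_\theta x)$ via the diffeomorphism $(\theta,x)\mapsto(\theta,S_\theta x)$ makes rigorous a step the paper leaves implicit.
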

$$$$

\begin{proof}We only prove \eqref{eq f(x1,x2,0)} since the proof of \eqref{eq u(x1,x2,0)} is similar.
Setting
$$
A_r=\int_{B_r} \Big| f(x',x_3)-g(R_{-x_3}x')  \Big|\,dx'dx_3,
$$
then it suffice to show that $A_r=0$ for all $r>0$. By the definition of $g$, we obtain
\begin{equation*}\begin{aligned}
A_r &= \int_{B_r} \left| f(x',x_3)-\frac{1}{2\pi}\int_0^{2\pi} f(R_{a-x_3}x',a)\,da  \right| \,dx \\
&\le \frac{1}{2\pi}\int_{B_r}  \int_0^{2\pi}\left|f(x',x_3)- f(R_{a-x_3}x',a)\right|\,da   \,dx.
\end{aligned}\end{equation*}
Next, we make a change of variable $a=b+x_3$, then it follows that
\begin{equation*}\begin{aligned}\label{eq sec2 A1}
A_r&\le\frac{1}{2\pi}\int_{B_r}  \int_{-x_3}^{2\pi-x_3}\left|f(x',x_3)- f(R_{b}x',b+x_3)\right|\,db   \,dx \\
&\le\frac{1}{2\pi}\int_{B_r}  \int_{-r}^{2\pi+r}\left|f(x',x_3)- f(R_{b}x',b+x_3)\right|\,db   \,dx \\
&=\frac{1}{2\pi}  \int_{-r}^{2\pi+r}\int_{B_r}\left|f(x',x_3)- f(S_{b}(x',x_3))\right|\,dx   \,db.
\end{aligned}\end{equation*}
Thus, by Definition \ref{def helical function} we get $A_r \equiv 0$ and hence \eqref{eq f(x1,x2,0)} holds.

\end{proof}
Motivated by \eqref{eq helical function}, \eqref{eq helical vector}, by slight abuse of notations, we may refer to $\tilde{f}(x')$ as $f(x',0)$, and refer to $\tilde{u}(x')$ as $u(x',0)$.
 Next, we derive some useful properties for helical functions and vector fields.

\begin{lemma}\label{le partial3 scalar}
Let $f \in L^{\infty}_{loc}(\R^3)$ be a helical function, then
\begin{equation}\label{eq partial3 scalar}
\partial_3f=x_1\partial_2f-x_2\partial_1f
\end{equation}
in the sense of distribution.

\begin{proof}

In \cite{ET}, \eqref{eq partial3 scalar} has been shown when $f$ is smooth. For general $f \in L^{\infty}_{loc}(\R^3)$, we define $f_0(x_1,x_2):=f(x_1,x_2,0)$. Let $f_{0,n} \in C_{c}^{\infty}(\R^2)$ be a sequence of smooth function converges to $f_0$ in $L^1_{loc}(\R^2)$ and set $f_n(x',x_3):=f_{0,n}(R_{-x_3}x')$, then for any $\phi \in C^{\infty}_c(\R^3)$, it follows from \eqref{eq def of R theta} and Definition \ref{def helical function} that
\[\begin{aligned}
\int_{\R^3} f(x) \partial_3 \phi(x) \,dx&=\int_{\R^3} f_0(R_{-x_3}x')\partial_3\phi(x)\,dx \\
&=  \int_{\R^3} f_0(x_1\cos{x_3}-x_2\sin{x_3},x_1\sin{x_3}+x_2\cos{x_3}) \partial_3\phi(x)\,dx.
\end{aligned}\]
Since $\phi$ has compact support, the dominating convergence theorem gives
\[\begin{aligned}\int_{\R^3} f(x) \partial_3 \phi(x) \,dx
&= \lim_n \int_{\R^3} f_{0,n}(x_1\cos{x_3}-x_2\sin{x_3},x_1\sin{x_3}+x_2\cos{x_3})\partial_3\phi(x)\,dx \\
&= \lim_n \int_{\R^3} f_n(x)\partial_3 \phi(x)\,dx.
\end{aligned}\]
Note that \eqref{eq partial3 scalar} holds for smooth helical function $f_n$, so integration by parts yields
\[\begin{aligned}\int_{\R^3} f(x) \partial_3 \phi(x) \,dx
&=\lim_n \int_{\R^3} -\partial_3f_n(x) \phi(x)\,dx \\
&=\lim_n \int_{\R^3} (x_2 \partial_1f_n -x_1\partial_2f_n)\phi(x)\,dx \\
&=\lim_n \int_{\R^3} f_n(x_1\partial_2\phi(x)-x_2\partial_1\phi(x)) \,dx \\
&=\lim_n \int_{\R^3} (x_1\partial_2\phi(x)-x_2\partial_1\phi(x))f_{0,n}(x_1\cos{x_3}-x_2\sin{x_3},x_1\sin{x_3}+x_2\cos{x_3})   \,dx.
\end{aligned}\]
Again by dominating convergence theorem, we finally get
\[\begin{aligned}\int_{\R^3} f(x) \partial_3 \phi(x) \,dx
&=\int_{\R^3} (x_1\partial_2\phi(x)-x_2\partial_1\phi(x))f_0(x_1\cos{x_3}-x_2\sin{x_3},x_1\sin{x_3}+x_2\cos{x_3})  \,dx \\
&=\int_{\R^3} f(x) \left(x_1\partial_2\phi(x)-x_2\partial_1\phi(x)\right) \,dx,
\end{aligned}\]
which proves \eqref{eq partial3 scalar}.
\end{proof}
\end{lemma}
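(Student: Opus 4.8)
The plan is to bypass the two-dimensional reduction and the approximation of the base profile altogether, and instead exploit directly the fact that helical invariance is nothing but invariance under the flow of the \emph{divergence-free} vector field $V(x):=(x_2,-x_1,1)$. First I would record the two elementary facts that $V$ generates the family $S_\theta$, i.e. $\frac{d}{d\theta}\big|_{\theta=0}S_\theta x=V(x)$, and that $\nabla\cdot V=0$. An integration by parts then shows that the claimed identity $\partial_3 f=x_1\partial_2 f-x_2\partial_1 f$ is, in the sense of distributions, exactly the statement $V\cdot\nabla f=0$; and because $\nabla\cdot V=0$, testing against $\phi\in C_c^\infty(\R^3)$ reduces the whole lemma to the single identity
\begin{equation*}
\int_{\R^3} f(x)\,\big(V\cdot\nabla\phi\big)(x)\,dx=0\qquad\text{for all }\phi\in C_c^\infty(\R^3).
\end{equation*}

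Next I would observe that $V\cdot\nabla\phi(x)=\frac{d}{d\theta}\big|_{\theta=0}\phi(S_\theta x)$, so the integral above is the derivative at $\theta=0$ of $g(\theta):=\int_{\R^3} f(x)\phi(S_\theta x)\,dx$. The heart of the argument is that $g$ is in fact \emph{constant}: performing the change of variables $u=S_\theta x$ — which preserves Lebesgue measure since $S_\theta$ equals the rotation $R_\theta$ up to a translation and hence has Jacobian determinant $\det R_\theta=1$ — and then invoking the helical invariance $f(S_{-\theta}u)=f(u)$ (Definition \ref{def helical function} with parameter $-\theta$), one gets $g(\theta)=\int f(S_{-\theta}u)\phi(u)\,du=\int f\phi\,du=g(0)$ for every $\theta$. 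Thus $g'(0)=0$, which is precisely the identity we need.

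The hard part, such as it is, will be the measure-theoretic bookkeeping needed to differentiate $g$ under the integral sign at $\theta=0$. For $\theta$ in a fixed interval $[-\delta,\delta]$ the function $x\mapsto\phi(S_\theta x)$ is supported in the compact set $K:=\bigcup_{|\theta|\le\delta}S_{-\theta}(\operatorname{supp}\phi)$, on which $f$ is bounded because $f\in L^\infty_{loc}(\R^3)$; the integrand and its $\theta$-derivative are then dominated by $\|f\|_{L^\infty(K)}\,\|\nabla\phi\|_{L^\infty}\,\|V\|_{L^\infty(K)}\,\mathbf 1_K\in L^1(\R^3)$, so dominated convergence justifies the differentiation. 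I would stress that the a.e.\ invariance is used only for each fixed parameter $-\theta$ separately, so no joint measurability in $(\theta,x)$ and no Fubini argument is required; the only genuine care is the uniform compact support and domination. As a closing remark I would note that this route never uses smoothness of $f$ nor any approximation $f_{0,n}\to f_0$, though the approximation argument — first proving the identity for smooth helical $f$ by differentiating $f(S_\theta x)=f(x)$ in $\theta$ and then passing to the limit — is an equally valid alternative.
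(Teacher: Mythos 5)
Your proof is correct, and it follows a genuinely different route from the paper's. The paper proves \eqref{eq partial3 scalar} by approximation: it takes the planar trace $f_0(x')=f(x',0)$ (which requires the averaging representation $f(x)=f_0(R_{-x_3}x')$ established in the preceding lemma), approximates $f_0$ by $f_{0,n}\in C_c^\infty(\R^2)$, lifts each $f_{0,n}$ to a smooth helical function $f_n$, invokes the smooth-case identity from \cite{ET}, and passes to the limit against the test function. You bypass all of this: your reduction of the lemma to $\int_{\R^3} f\,(V\cdot\nabla\phi)\,dx=0$ with $V(x)=(x_2,-x_1,1)$ is exact (each component $V_i$ is independent of $x_i$, so the integrations by parts produce no lower-order terms, which is the componentwise form of $\nabla\cdot V=0$), and the constancy of $g(\theta)=\int f(x)\phi(S_\theta x)\,dx$ follows correctly from the measure-preserving change of variables $u=S_\theta x$ together with Definition \ref{def helical function} applied at the single parameter $-\theta$, so no joint measurability or Fubini issue arises. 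The differentiation at $\theta=0$ is properly dominated: the difference quotients of $\theta\mapsto\phi(S_\theta x)$ are supported in the compact set $K=\bigcup_{|\theta|\le\delta}S_{-\theta}(\operatorname{supp}\phi)$ and uniformly bounded there, while $f\in L^\infty_{loc}$ gives an $L^1$ majorant. What your argument buys: it is self-contained (no appeal to \cite{ET} and no need for the trace/averaging lemma), it works verbatim under the weaker hypothesis $f\in L^1_{loc}(\R^3)$ (local boundedness is used only for the majorant $C\|f\|_{L^\infty(K)}\mathbf{1}_K$, but $C|f|\mathbf{1}_K$ is already integrable for $f\in L^1_{loc}$), and it generalizes to invariance under any smooth one-parameter group of measure-preserving diffeomorphisms, with $V$ its infinitesimal generator. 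What the paper's route buys: it reuses the two-dimensional reduction machinery on which the whole paper is built, and the identical approximation template is then recycled for the vector-field analogue, Lemma \ref{le helical vector}; your group-theoretic argument would need a small modification there (one must also rotate the components of $u$ by $R_{-\theta}$ inside $g$), whereas the paper's proof transfers with no change.
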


For helical vector fields, by \cite{ET} and a similar argument as above, we obtain

\begin{lemma}\label{le helical vector}
Let $u=(u_1,u_2,u_3) \in L^{1}_{loc}$ be a helical vector field, then we have
\begin{equation*}
\partial_3u_1=x_1\partial_2u_1-x_2\partial_1u_1+u_2,
\end{equation*}
\begin{equation*}
\partial_3u_2=x_1\partial_2u_2-x_2\partial_1u_2-u_1,
\end{equation*}
and
\begin{equation}\label{eq sec2 A0}
\partial_3u_3=x_1\partial_2u_3-x_2\partial_1u_3.
\end{equation}
\end{lemma}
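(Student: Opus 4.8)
The plan is to reduce everything to the scalar identity already established in Lemma \ref{le partial3 scalar}. Writing the defining relation of a helical vector field as $u(S_\theta x)=R_\theta u(x)$ (equivalent to Definition \ref{def helical vector field} by applying $R_\theta$) and reading off components with the explicit form of $R_\theta$ in \eqref{eq def of R theta}, I obtain
\[
u_1(S_\theta x)=\cos\theta\, u_1(x)+\sin\theta\, u_2(x),\quad u_2(S_\theta x)=-\sin\theta\, u_1(x)+\cos\theta\, u_2(x),\quad u_3(S_\theta x)=u_3(x).
\]
The last identity says precisely that $u_3$ is a helical function, so Lemma \ref{le partial3 scalar} applies verbatim and yields \eqref{eq sec2 A0} immediately. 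The genuine content is therefore concentrated in the first two (twisted) relations, which do not make $u_1$ or $u_2$ helical individually.

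To handle the twist I would complexify. Setting $v:=u_1+iu_2$, the first two relations collapse to the single identity $v(S_\theta x)=e^{-i\theta}v(x)$, and hence the gauge-transformed function
\[
g(x):=e^{ix_3}v(x)=e^{ix_3}\big(u_1(x)+iu_2(x)\big)
\]
satisfies $g(S_\theta x)=e^{i(x_3+\theta)}v(S_\theta x)=e^{ix_3}v(x)=g(x)$; that is, the real and imaginary parts of $g$ are genuine helical functions. Applying Lemma \ref{le partial3 scalar} to each of them gives $\partial_3 g=x_1\partial_2 g-x_2\partial_1 g$ in the sense of distributions. Since $e^{ix_3}$ is smooth, the Leibniz rule is legitimate distributionally, so $\partial_3 g=e^{ix_3}(\partial_3 v+iv)$ while $x_1\partial_2 g-x_2\partial_1 g=e^{ix_3}(x_1\partial_2 v-x_2\partial_1 v)$; cancelling the nonvanishing smooth factor $e^{ix_3}$ leaves
\[
\partial_3 v=x_1\partial_2 v-x_2\partial_1 v-iv.
\]
Separating real and imaginary parts (note $-iv=u_2-iu_1$) produces exactly the two claimed identities for $u_1$ and $u_2$.

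The only point needing care is that Lemma \ref{le partial3 scalar} is stated for $L^\infty_{loc}$ helical functions whereas here $u\in L^1_{loc}$. However, that lemma's proof only uses approximation in $L^1_{loc}$ by smooth helical functions together with testing against compactly supported $\phi$, so it goes through unchanged for $L^1_{loc}$ helical functions, and I would record this minor strengthening once. Alternatively one can bypass the complexification and argue directly as in Lemma \ref{le partial3 scalar}: approximate the reduced field $\tilde u$ by smooth compactly supported fields, extend them to smooth helical vector fields via \eqref{eq u(x1,x2,0)}, use that the three identities hold classically for smooth helical fields (differentiating $u(S_\theta x)=R_\theta u(x)$ at $\theta=0$, whose infinitesimal generator is the transport $(x_2,-x_1,1)\cdot\nabla$ on the left and the rotation matrix on the right), and pass to the distributional limit. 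I expect the main obstacle to be purely bookkeeping---keeping the low-regularity ($L^1_{loc}$) distributional identities consistent under the limit and under multiplication by the smooth weight $e^{ix_3}$---rather than anything conceptual, since the complexification makes the vector case literally a scalar corollary of Lemma \ref{le partial3 scalar}.
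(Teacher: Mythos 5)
Your proof is correct, but it takes a genuinely different route from the paper. The paper disposes of this lemma in two sentences: for smooth $u$ it cites the computation in \cite{ET}, and for general $u\in L^1_{loc}$ it invokes the same smoothing/approximation scheme used in Lemma \ref{le partial3 scalar} (approximate the reduced planar field, lift to a smooth helical field, pass to the limit against test functions). You instead make the vector lemma a literal corollary of the scalar Lemma \ref{le partial3 scalar}: the relation $u(S_\theta x)=R_\theta u(x)$ shows $u_3$ is itself a helical function (giving \eqref{eq sec2 A0} at once), and the complexification $v=u_1+iu_2$ with the gauge factor $e^{ix_3}$ turns the twisted transformation law $v(S_\theta x)=e^{-i\theta}v(x)$ into genuine helical invariance of $g=e^{ix_3}v$, after which the Leibniz rule for multiplication by the smooth, nonvanishing factor $e^{ix_3}$ recovers the two rotational identities, including the zeroth-order terms $\pm u_2,\mp u_1$, with correct signs. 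This buys self-containedness (no appeal to \cite{ET}, no second approximation argument for vector fields) and makes transparent where the lower-order terms come from; the paper's route is shorter on the page only because it outsources both the smooth computation and the limiting argument. You are also right to flag, and correct about, the regularity mismatch: Lemma \ref{le partial3 scalar} is stated for $L^{\infty}_{loc}$ while the present lemma assumes only $L^1_{loc}$, but its proof uses nothing beyond $L^1_{loc}$ approximation and testing against compactly supported functions, so your strengthening is legitimate --- indeed the paper itself implicitly relies on this same extension without recording it.
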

\begin{proof}
Assuming $u$ is smooth, then the above equations have already been proved in \cite{ET}. For general $u \in L^1_{loc}$, the proof is only a matter of smoothness as in Lemma \ref{le partial3 scalar} so we omit it.
\end{proof}
Next, we will describe the particle trajectory map associated with Log-Lipschitz continuous helical vector fields.
\begin{definition}
A function (vector field) $ u$ is called Log-Lipschitz continuous if
\begin{equation*}
\sup_{x\neq y}\frac{\left| u(x)-u(y) \right|}{F(|x-y|)} <\infty,
\end{equation*}
where $F$ is the Log-Lipschitz function
\begin{equation}\label{eq def of F}F(r)=\begin{cases}
r(1-\log r) \quad r \le \frac{1}{e} \\
r+\frac{1}{e} \quad r>\frac{1}{e}.
\end{cases}\end{equation}
\end{definition}
Note that $F'(r)=-\log r$ if $r \le \frac{1}{e}$ and $F'(r)=1$ if $r>\frac{1}{e}$, so $F'(r)$ is continuous and decreasing. Thus, $F(r)$ is a concave function which satisfies $F(r) \approx r(1-\log^-r)$, where
\begin{equation*}\log^-r=\begin{cases}
\log r \quad r \le 1 \\
0 \quad r>1.
\end{cases}\end{equation*}
\begin{definition}
Let $u: \R^d \times [0,T) \to \R^d$ be a locally Log-Lipschitz continuous velocity field, we say $X(\alpha,t)$ is a particle trajectory map associated with $u$ if it satisfies
\begin{equation*}\begin{cases}\begin{aligned}
\frac{dX(\alpha,t)}{dt}&= u(X(\alpha,t),t)\\
X(\alpha,0)&=\alpha
\end{aligned}\end{cases}\end{equation*}
for any $\alpha \in \R^d$ and $t \in [0,T)$.
\end{definition}
$$$$
The particle trajectory map associated with a helical vector field satisfies the following:
\begin{lemma}\label{le helical particle trajectory map}
Let $u$ be a locally Log-Lipschitz continuous helical vector field and $X(\alpha,t)$ be its associated particle trajectory map, then for any  $\alpha \in \R^3$, $t \in \R^+$ and $\theta \in \R$,
\begin{equation}\label{eq helical particle map}
S_{-\theta}X(S_{\theta}\alpha,t)=X(\alpha,t).
\end{equation}
\end{lemma}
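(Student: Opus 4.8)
The plan is to exploit the invariance of the trajectory ODE under the screw motion $S_{\theta}$ together with the uniqueness of trajectories guaranteed by the Log-Lipschitz regularity. Concretely, I would fix $\theta \in \R$ and introduce the candidate map
\[
Y(\alpha,t) := S_{-\theta} X(S_{\theta}\alpha, t),
\]
and show that $Y(\cdot,t)$ is itself a particle trajectory map associated with $u$ sharing the initial datum of $X$. Since $S_{-\theta}$ is the inverse of $S_{\theta}$, the initial condition is immediate: $Y(\alpha,0) = S_{-\theta} X(S_{\theta}\alpha,0) = S_{-\theta} S_{\theta}\alpha = \alpha$. The conclusion \eqref{eq helical particle map} then follows once we know the flow is uniquely determined by its initial value.

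The heart of the argument is the ODE computation. Writing $S_{-\theta} w = R_{-\theta} w - (0,0,\theta)^{\top}$, so that $S_{-\theta}$ is affine with linear part $R_{-\theta}$ and a $t$-independent translation, differentiation in $t$ gives
\[
\frac{d}{dt}Y(\alpha,t) = R_{-\theta}\,\frac{d}{dt}X(S_{\theta}\alpha,t) = R_{-\theta}\,u\big(X(S_{\theta}\alpha,t),t\big).
\]
Setting $z = X(S_{\theta}\alpha,t)$ and substituting $x = S_{-\theta} z$ into the defining relation of a helical vector field (Definition \ref{def helical vector field}), namely $R_{-\theta}u(S_{\theta} x,t) = u(x,t)$, yields $R_{-\theta} u(z,t) = u(S_{-\theta} z,t)$. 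Hence
\[
\frac{d}{dt}Y(\alpha,t) = u\big(S_{-\theta}X(S_{\theta}\alpha,t),t\big) = u\big(Y(\alpha,t),t\big),
\]
so $Y$ solves the same trajectory ODE as $X$ with the same initial condition.

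The main obstacle, and the only point requiring genuine care, is the uniqueness step: $u$ is merely locally Log-Lipschitz rather than Lipschitz, so I cannot invoke the classical Lipschitz uniqueness theorem. Instead I would rely on Osgood's uniqueness criterion, which applies precisely because the modulus of continuity $F$ in \eqref{eq def of F} satisfies $\int_0^1 \frac{dr}{F(r)} = +\infty$ (near the origin $F(r)\approx r|\log r|$, whose reciprocal is non-integrable); this forces any two trajectories with the same starting point to coincide, giving $Y(\alpha,t) = X(\alpha,t)$ and hence \eqref{eq helical particle map}. Two minor bookkeeping points remain: first, one should read the helical hypothesis as holding for $u(\cdot,t)$ at each fixed $t$, consistent with the time-dependent velocity fields arising in our setting; second, the differentiation of $Y$ in $t$ above is legitimate because $X(\cdot,t)$ is $C^1$ in $t$ (the velocity being continuous) and $S_{-\theta}$ is smooth.
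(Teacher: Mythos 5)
Your proposal is correct and follows essentially the same route as the paper's proof: differentiate $S_{-\theta}X(S_{\theta}\alpha,t)$ in $t$ (the translation part drops out, leaving the linear part $R_{-\theta}$), use the helical property of $u$ to identify the result as $u(S_{-\theta}X(S_{\theta}\alpha,t),t)$, and conclude by uniqueness of solutions to the trajectory ODE. Your explicit appeal to Osgood's criterion to justify that uniqueness step is a welcome detail the paper leaves implicit, but it is the same argument.
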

\begin{proof}
A direct calculation shows that
\begin{equation*}\begin{aligned}
\frac{d\left(S_{-\theta}X(S_{\theta}\alpha,t)\right)}{dt}&=\frac{d\left(R_{-\theta}X(S_{\theta}\alpha,t)\right)}{dt} \\
&=R_{-\theta}u(X(S_{\theta}\alpha,t),t) \\
&=u(S_{-\theta}X(S_{\theta}\alpha,t),t).
\end{aligned}\end{equation*}
Thus, both $S_{-\theta}X(S_{\theta}\alpha,t)$ and $X(\alpha,t)$ are the solutions of
\begin{equation*}\begin{cases}\begin{aligned}
\frac{dX(\alpha,t)}{dt}&= u(X(\alpha,t),t)\\
X(\alpha,0)&=\alpha.
\end{aligned}\end{cases}\end{equation*}

So the uniqueness of the particle trajectory map implies
\begin{equation*}
S_{\theta}X(S_{-\theta}\alpha,t)=X(\alpha,t).
\end{equation*}
\end{proof}

\subsection{Biot-Savart law} The main purpose of this subsection is to the following:
\begin{equation}\label{eq sec2 C1}
\nabla\,(-\Delta_{\R^2\times \R})^{-1} f=-\frac{1}{4\pi} \int_{\R^3}\frac{x-y}{|x-y|^3}f(y)\,dy,
\end{equation}
which will subsequently be used to prove the conservation of the energy. First we introduce the operator $\Delta_{\R^2\times\T}^{-1}$.
\begin{proposition}[\cite{BLN}]\label{prop stream function}
The Green's function for the Laplacian in $\R^3$ with $2\pi-$periodic boundary condition in the $x_3-$direction is given by
\begin{equation}\label{eq sec2 C0}
\mathbf{G}(x)=\frac{1}{4\pi^2}\log\frac{1}{|x'|}+\frac{1}{2\pi^2}\sum_{n=1}^{\infty}K_0(n|x'|)\cos(nx_3)
\end{equation}
for all $x \in \R^2\times \T$ with $\tilde{x}\neq 0$, where
\begin{equation}
K_0(z):=\int_0^{\infty}\frac{\cos(t)}{\sqrt{z^2+t^2}}\,dt.
\end{equation}
More precisely, the stream function $$\psi(x):=\int_{\R^2\times \T} \mathbf{G}(x-y)f(y)\,dy$$ is a periodic function in $x_3$ and satisfies $$-\Delta \psi=f.$$
\end{proposition}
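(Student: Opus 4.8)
The plan is to verify directly that the function $\mathbf{G}$ in \eqref{eq sec2 C0} is the fundamental solution of $-\Delta$ subject to $2\pi$-periodicity in $x_3$, that is, $-\Delta\mathbf{G}=\delta_0$ in the sense of periodic distributions on $\R^2\times\T$, where $\delta_0$ is the unit point mass at the origin of the strip. The natural tool is a Fourier expansion in the periodic variable $x_3$. First I would write the periodic source as $\delta_0=\delta_{\R^2}(x')\otimes\frac{1}{2\pi}\sum_{n\in\mathbb{Z}}e^{inx_3}$ and look for $\mathbf{G}$ in the form $\mathbf{G}(x)=\sum_{n\in\mathbb{Z}}\mathbf{G}_n(x')e^{inx_3}$. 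Since $\Delta=\Delta'+\partial_3^2$ with $\Delta'$ the two-dimensional Laplacian, matching Fourier modes decouples the problem into the family of planar screened-Poisson equations $(-\Delta'+n^2)\mathbf{G}_n=\frac{1}{2\pi}\delta_{\R^2}$ on $\R^2$.

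Next I would solve these modewise equations explicitly. For $n=0$ the equation is $-\Delta'\mathbf{G}_0=\frac{1}{2\pi}\delta_{\R^2}$, whose fundamental solution is $\mathbf{G}_0(x')=\frac{1}{4\pi^2}\log\frac{1}{|x'|}$, reproducing the first term of \eqref{eq sec2 C0}. For $n\neq0$ the relevant radial solution of $(-\Delta'+n^2)u=0$ off the origin is $K_0(|n||x'|)$: writing the planar Laplacian in polar form $\Delta'u=u''+r^{-1}u'$ and using the modified Bessel equation $K_0''+r^{-1}K_0'-K_0=0$, one checks that $K_0(|n||x'|)$ is annihilated by $(-\Delta'+n^2)$ for $x'\neq0$, while its singularity $K_0(r)\sim\log\frac1r$ as $r\to0$ fixes the normalization so that $(-\Delta'+n^2)\big[\frac{1}{2\pi}K_0(|n||x'|)\big]=\delta_{\R^2}$; hence $\mathbf{G}_n(x')=\frac{1}{4\pi^2}K_0(|n||x'|)$. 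Here I would also confirm that the integral in the statement is the standard Bessel function, either by the substitution $t\mapsto zt$ in $\int_0^\infty\cos t\,(z^2+t^2)^{-1/2}\,dt=\int_0^\infty\cos(zt)(1+t^2)^{-1/2}\,dt$, or by differentiating under the integral sign to verify that $z\mapsto\int_0^\infty\cos t\,(z^2+t^2)^{-1/2}\,dt$ solves the modified Bessel equation. Summing $\mathbf{G}_ne^{inx_3}$ over $n\in\mathbb{Z}$ and pairing $\pm n$ via $e^{inx_3}+e^{-inx_3}=2\cos(nx_3)$ then yields exactly \eqref{eq sec2 C0}.

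It remains to justify the formal manipulations and to identify the singularity correctly, which is where the real work lies. Using the asymptotics $K_0(z)\sim\sqrt{\pi/(2z)}\,e^{-z}$ as $z\to\infty$, the series $\sum_{n\ge1}K_0(n|x'|)\cos(nx_3)$ and all its term-by-term derivatives converge absolutely and locally uniformly on $\{x'\neq0\}$; this shows $\mathbf{G}$ is smooth and harmonic off the axis $\{x'=0\}$ and legitimizes the modewise computation there. The delicate point is the behaviour near the axis: each summand blows up like $\log\frac{1}{|x'|}$ as $x'\to0$, so the individual terms diverge, yet at a regular axis point $(0,x_3)$ with $x_3\not\equiv0$ the value $\mathbf{G}(0,x_3)$ must be finite, while at the origin $\mathbf{G}$ must carry the three-dimensional Newtonian singularity $\frac{1}{4\pi|x|}$. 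I expect this reconciliation to be the main obstacle, since it requires controlling how the divergences of the Fourier modes reassemble.

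I would resolve this by Poisson summation, recognizing the series in \eqref{eq sec2 C0} as the renormalized method-of-images sum $\sum_{k\in\mathbb{Z}}\frac{1}{4\pi}\big(|x'|^2+(x_3+2\pi k)^2\big)^{-1/2}$, in which the logarithmically divergent constant has been subtracted. In this representation the $k=0$ image supplies precisely the local singularity $\frac{1}{4\pi|x|}$ near the origin, and all remaining images are smooth there, so $\mathbf{G}-\frac{1}{4\pi|x|}$ extends harmonically across the origin; comparing with the free fundamental solution then gives $-\Delta\mathbf{G}=\delta_0$ in periodic distributions, with no spurious contribution elsewhere on the axis. Finally, since $\mathbf{G}$ is $2\pi$-periodic in $x_3$, the stream function $\psi(x)=\int_{\R^2\times\T}\mathbf{G}(x-y)f(y)\,dy$ inherits periodicity, and for $f$ in the admissible class (smooth with adequate decay in $x'$) differentiating under the integral against $-\Delta\mathbf{G}=\delta_0$ yields $-\Delta\psi=f$, which completes the argument.
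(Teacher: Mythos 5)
The paper offers no proof of this proposition: it is stated with the citation \cite{BLN} and used as imported machinery, so there is no in-paper argument to compare yours against. Judged on its own merits, your proposal is correct and is essentially a reconstruction of the derivation behind the cited result. The Fourier decomposition in $x_3$, the identification of the zero mode with $\frac{1}{4\pi^2}\log\frac{1}{|x'|}$ and of the $n$-th mode with the two-dimensional screened-Poisson fundamental solution $\frac{1}{4\pi^2}K_0(|n||x'|)$, and the check (via the substitution $t\mapsto zt$) that the paper's integral formula for $K_0$ is the standard modified Bessel function are all correct, as is your observation that the exponential decay of $K_0$ makes the series and its derivatives converge locally uniformly off the axis. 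Moreover, the resummation you propose via Poisson summation — rewriting $\mathbf{G}$ as a constant plus the renormalized method-of-images sum $\frac{1}{4\pi|x|}+\frac{1}{4\pi}\sum_{m\ge 1}\bigl[(|x'|^2+(x_3\mp 2m\pi)^2)^{-1/2}-\frac{1}{2m\pi}\bigr]$ — is precisely the Schl\"omilch series identity that the paper itself quotes from \cite{BLN} immediately after the proposition, and it does exactly the job you assign to it: the $k=0$ image carries the Newtonian singularity at the origin, the remaining images are smooth nearby, and $-\Delta\mathbf{G}=\delta_0$ follows in the periodic distributional sense.

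The one step where your sketch hides real work is the Poisson summation itself: the $n=0$ Fourier coefficient of the periodized Newtonian potential diverges (this is the origin of the subtracted constants $\frac{1}{2m\pi}$), so the summation formula cannot be applied to $\sum_k \frac{1}{4\pi}(|x'|^2+(x_3-2\pi k)^2)^{-1/2}$ as it stands. One must either apply it to a regularized or differentiated quantity (for instance to $\partial_{x_3}$ of the image sum, integrating back afterwards), or argue in reverse: take the renormalized image sum as the candidate, compute that its $n\neq 0$ Fourier coefficients in $x_3$ equal $\frac{1}{4\pi^2}K_0(|n||x'|)$ (the subtracted constants only affect the zero mode), and check that its zero mode solves $-\Delta' g_0=\frac{1}{2\pi}\delta_{\R^2}$, hence equals $\frac{1}{4\pi^2}\log\frac{1}{|x'|}$ up to an additive constant that $-\Delta$ annihilates. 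Since you explicitly flag the renormalization, this is a matter of writing out details rather than a gap in the idea; with that step made precise, your argument proves the proposition and, as a byproduct, re-derives the identity the paper takes from \cite{BLN} on faith.
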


\begin{lemma}
For any $f,g \in L^1_1 \bigcap L^{\infty}_1(\R^2\times \T)$,
$$
\int_{\R^2\times \T}\int_{\R^2\times \T} \mathbf{G}(x-y)f(x)g(y)\,dx\,dy
$$
converges absolutely. In particular, one might freely interchange the sum and the integral in above expression.
\end{lemma}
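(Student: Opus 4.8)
The plan is to reduce everything to a single pointwise bound on $\mathbf{G}$ that separates a locally integrable singular kernel from a logarithmically growing tail, and then pair each piece against the appropriate weighted norm of $f$ and $g$. The only analytic input I need about the Bessel function is that $K_0$ is positive, strictly decreasing on $(0,\infty)$, and integrable, with $C_0:=\int_0^\infty K_0(s)\,ds<\infty$ (finiteness follows from $K_0(s)\sim-\log s$ at $0$ and $K_0(s)\sim\sqrt{\pi/2s}\,e^{-s}$ at $\infty$; in fact $C_0=\tfrac{\pi}{2}$). Since $K_0$ is decreasing, the integral comparison test gives, for every $r>0$,
\[
0\le \sum_{n=1}^\infty K_0(nr)\le \frac{1}{r}\int_0^\infty K_0(u)\,du=\frac{C_0}{r}.
\]
Combined with $|\cos(nx_3)|\le 1$ and the formula \eqref{eq sec2 C0}, this yields the clean bound
\[
|\mathbf{G}(z)|\le \frac{1}{4\pi^2}\bigl|\log|z'|\bigr|+\frac{C_0}{2\pi^2|z'|},\qquad z=(z',z_3),
\]
which depends only on $z'=x'-y'$. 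Its right-hand side is integrable over $\{|z'|\le 1\}$ as a function on $\R^2$ (both $|\log|z'||$ and $|z'|^{-1}$ are $2$-dimensionally integrable) and grows only like $\log|z'|$ when $|z'|>1$.

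Next I would split the double integral according to $|x'-y'|\le 1$ and $|x'-y'|>1$. For the near-diagonal piece I fix $x$, use the pointwise decay $|g(y)|\le\|g\|_{L^\infty_1}\langle y'\rangle^{-1}$ coming from the $L^\infty_1$ bound together with $\langle y'\rangle^{-1}\lesssim\langle x'\rangle^{-1}$ on $\{|x'-y'|\le 1\}$, and integrate the $y_3$-independent kernel over $\T\times\{|z'|\le 1\}$. This produces a factor $\langle x'\rangle^{-1}$, after which $\int_{\R^2\times\T}|f(x)|\langle x'\rangle^{-1}\,dx\le\|f\|_{L^1}<\infty$ closes the estimate. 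For the far piece I use $|\mathbf{G}(x-y)|\lesssim 1+\log|x'-y'|$ together with the elementary splitting $\log|x'-y'|\le\log(1+|x'|)+\log(1+|y'|)\le\langle x'\rangle+\langle y'\rangle$; after distributing, each resulting term is a product of an $L^1$ norm and an $L^1_1$ norm of $f$ or $g$, hence finite. This establishes absolute convergence.

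For the interchange of sum and integral I would observe that, after taking absolute values, the series contribution is dominated by $\frac{1}{2\pi^2}\int\!\!\int\bigl(\sum_n K_0(n|x'-y'|)\bigr)|f(x)||g(y)|\le \frac{C_0}{2\pi^2}\int\!\!\int |x'-y'|^{-1}|f||g|$, and the same near/far splitting shows the latter is finite. Tonelli's theorem for nonnegative series-integrals then permits exchanging $\sum_n$ with $\int\!\!\int$, which is exactly the asserted interchange.

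The main obstacle—and essentially the only place real analysis enters—is the uniform control $\sum_n K_0(nr)\le C_0/r$ of the Bessel sum and the recognition that the resulting $|z'|^{-1}$ singularity is integrable in the two planar variables, so that the cruder planar bound suffices and one never needs the sharper three-dimensional $|z|^{-1}$ estimate. Everything else is bookkeeping with the weighted norms, in which the weight $\langle x'\rangle$ built into $L^1_1$ is precisely what absorbs the logarithmic growth of $\mathbf{G}$ at spatial infinity.
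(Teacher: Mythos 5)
Your proof is correct, but it reaches the crucial kernel bound by a genuinely different route than the paper. The paper never uses positivity, monotonicity, or integrability of $K_0$: it proves the pointwise decay $|K_0(z)|\lesssim z^{-3/2}$ directly from the defining formula $K_0(z)=\int_0^\infty \cos t\,(z^2+t^2)^{-1/2}\,dt$, by one integration by parts (giving $|K_0(z)|\lesssim 1/z$), a second one (giving $|K_0(z)|\lesssim 1/z^2$), and interpolation; summing $\sum_n n^{-3/2}$ then yields $|\mathbf{G}(z)|\lesssim 1+|z'|+|z'|^{-3/2}$, which is paired against the weighted norms much as you do, the point again being that the singularity is integrable in the two planar variables. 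You instead sum the Bessel series by the integral comparison test, $\sum_{n\ge 1}K_0(nr)\le \tfrac1r\int_0^\infty K_0(u)\,du$, which gives the sharper (indeed optimal) singularity $|z'|^{-1}$ and avoids the interpolation step entirely. The trade-off is that your argument imports classical facts about the modified Bessel function — positivity and monotonicity are not at all evident from the paper's oscillatory-integral formula (one needs, e.g., the representation $K_0(z)=\int_0^\infty e^{-z\cosh\theta}\,d\theta$ to see them, and to compute $\int_0^\infty K_0=\pi/2$) — whereas the paper's proof is self-contained, extracting everything from the given formula. The remaining bookkeeping differs only cosmetically: your near/far splitting with $\log|x'-y'|\le\log(1+|x'|)+\log(1+|y'|)$ and the weight $\langle y'\rangle^{-1}\lesssim\langle x'\rangle^{-1}$ on the near-diagonal set plays the same role as the paper's absorption of the logarithm into $1+|z'|+|z'|^{-3/2}$ followed by the $|x'-y'|\gtrless 1$ case split; and your Tonelli--Fubini justification of the interchange is the same finiteness assertion the paper records implicitly.
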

\begin{proof}
First we note that it suffices to show
\begin{equation}\label{eq sec3 D0}
|K_0(z)|\lesssim z^{-\frac{3}{2}}.
\end{equation}
Assuming this holds true, then
\begin{equation*}\begin{aligned}
|\mathbf{G}(x-y)| \lesssim& \left| \log |x'-y'|\right| + \sum_{n=1}^{\infty} \frac{1}{n^{3/2}|x'-y'|^{3/2}} \\
\lesssim& 1+|x'-y'| +\frac{1}{|x'-y'|^{3/2}}.
\end{aligned}\end{equation*}
For the first term and the second term, we have
$$
\int_{\R^2\times \T}\int_{\R^2\times \T}(1+|x'-y'|)|f(x)g(y)|\,dx\,dy \lesssim \|f\|_{L^1_1\bigcap L^{\infty}_1}\|g\|_{L^1_1\bigcap L^{\infty}_1}.
$$
For the last term,
\begin{equation*}\begin{aligned}
&\int_{\R^2\times \T}\int_{\R^2\times \T}  \frac{1}{|x'-y'|^{3/2}}|f(x)g(y)| \,dx\,dy \\
\lesssim& \int_{|x'-y'|\ge 1}  \frac{1}{|x'-y'|^{3/2}}|f(x)g(y)| \,dx\,dy +\int_{|x'-y'|\le 1}   \frac{1}{|x'-y'|^{3/2}}|f(x)g(y)| \,dx\,dy \\
\lesssim& \int_{\R^2\times \T}\int_{\R^2\times \T} |f(x)g(y)| \,dx\,dy+\int_{\R^2\times \T}\int_{|y'|\le 1} \frac{\|g\|_{L^{\infty}}}{|y'|^{3/2}}\,dy |f(x)|\,dx \\
\lesssim& \|f\|_{L^1}\|g\|_{L^1}+\|f\|_{L^1}\|g\|_{L^{\infty}}\\
\lesssim& \|f\|_{L^1_1\bigcap L^{\infty}_1}\|g\|_{L^1_1\bigcap L^{\infty}_1}.
\end{aligned}\end{equation*}
Next it remains to prove \eqref{eq sec3 D0}. On one hand,
\begin{equation*}\begin{aligned}
K_0(z)=& \frac{1}{z}\int_0^{\infty} (1+t^2)^{-1/2} \,d \sin (zt) \\
=& \frac{1}{z}\int_0^{\infty} t\sin(zt)(1+t^2)^{-3/2} \,dt,
\end{aligned}\end{equation*}
which implies that $|K_0(z)|\lesssim \frac{1}{|z|}.$ On the other hand,
\begin{equation*}\begin{aligned}
K_0(z)=&\frac{1}{z}\int_0^{\infty} t\sin(zt)(1+t^2)^{-3/2} \,dt, \\
=& \frac{1}{z^2} \int_0^{\infty} \cos(zt)\left( \frac{1}{(1+t^2)^{3/2}}-\frac{3t^2}{(1+t^2)^{5/2}}  \right)\,dt,
\end{aligned}\end{equation*}
which shows $|K_0(z)| \lesssim \frac{1}{z^2}$ and hence we get \eqref{eq sec3 D0} by interpolation.
\end{proof}
To prove \eqref{eq sec2 C1}, one might not simply differentiate \eqref{eq sec2 C0} term by term since the integral of the Fourier series might not converges absolutely. Therefore, a careful argument is required. On one hand, we have

\begin{equation}\label{eq sec2 C2}
\begin{aligned}
&-\frac{1}{4\pi}\int_{\R^3}\frac{x-y}{|x-y|^3}f(y)\,dy\\
=&-\frac{1}{4\pi}\int_{\R^2\times \T} \sum_{n=-\infty}^{\infty} \frac{(x_1-y_1,x_2-y_2,x_3-y_3-2n\pi)}{(|x'-y'|^2+(x_3-y_3-2n\pi)^2)^{\frac{3}{2}}}f(y)\,dy,
\end{aligned}
\end{equation}
which converges absolutely due to the following lemma.
\begin{lemma}\label{le bound for u in R^3}Let $\Omega(x_1,x_2,x_3)$ be a $2\pi$-periodic function in $x_3$, then one has
\begin{equation}\label{eq le bound for u}
\int_{\R^3} \frac{1}{|x-y|^2} |\Omega(y)|\,dy \lesssim \|\Omega\|_{L^1\bigcap L^{\infty}(\R^2 \times \mathbb{T})}.
\end{equation}
\end{lemma}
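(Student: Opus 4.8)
The plan is to exploit the $2\pi$-periodicity of $\Omega$ in $y_3$ to fold the integral over $\R^3$ into an integral over a single period $\R^2\times\T$, at the cost of replacing the kernel $|x-y|^{-2}$ by its vertical periodization. Writing $\xi=x'-y'$ and grouping the copies $y_3\mapsto t+2\pi n$ with $t\in[0,2\pi)$, one obtains
\begin{equation*}
\int_{\R^3}\frac{|\Omega(y)|}{|x-y|^2}\,dy=\int_{\R^2}\int_0^{2\pi}\mathcal K(x'-y',x_3-t)\,|\Omega(y',t)|\,dt\,dy',\qquad \mathcal K(\xi,s):=\sum_{n\in\mathbb Z}\frac{1}{|\xi|^2+(s-2\pi n)^2}.
\end{equation*}
The whole estimate then reduces to a pointwise control of the periodized kernel $\mathcal K$.

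The key step I would carry out is to bound $\mathcal K$ by its single dominant term plus a uniformly bounded remainder, namely
\begin{equation*}
\mathcal K(\xi,s)\lesssim \frac{1}{|\xi|^2+\operatorname{dist}(s,2\pi\mathbb Z)^2}+1.
\end{equation*}
Indeed, choosing $n_0$ with $\operatorname{dist}(s,2\pi\mathbb Z)=|s-2\pi n_0|\le\pi$, the term $n=n_0$ furnishes the first summand, while for $n\neq n_0$ one has $|s-2\pi n|\ge 2\pi|n-n_0|-\pi\ge \pi|n-n_0|$, so (dropping $|\xi|^2\ge0$) the tail is at most $\sum_{k\ge1}2(\pi k)^{-2}\lesssim 1$ uniformly in $s$. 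This isolates the only genuine singularity — the nearest periodic copy of $x$ — and shows all remaining copies contribute a harmless bounded factor.

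With this in hand I would split the $(y',t)$-integral according to whether $r:=|x'-y'|^2+\operatorname{dist}(x_3-t,2\pi\mathbb Z)^2$ exceeds $1$ or not. On $\{r>1\}$ the dominant term of $\mathcal K$ is bounded, so this part, together with the $+1$ remainder, is controlled by $\|\Omega\|_{L^1(\R^2\times\T)}$. On $\{r\le1\}$ I would bound $|\Omega|$ by $\|\Omega\|_{L^\infty}$; after setting $\tau=x_3-t-2\pi n_0$ the surviving factor becomes $\int_{\{|z|\le1\}}|z|^{-2}\,dz=4\pi$, an integral of the three-dimensional Newtonian-type singularity $|z|^{-2}$ over a ball in the variables $z=(x'-y',\tau)$. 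Adding the two contributions yields $\lesssim\|\Omega\|_{L^1\cap L^\infty(\R^2\times\T)}$, which is \eqref{eq le bound for u}.

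The main obstacle is conceptual rather than computational: because $\Omega$ is periodic it does \emph{not} lie in $L^1(\R^3)$, so neither a global $L^1$ bound nor Young's inequality is available, and the periodicity must be used in an essential way. The crux is thus precisely the kernel estimate above — confirming that the periodized sum retains enough decay that away from the diagonal it is bounded (to be paired with the $L^1$ norm over one period), while near the diagonal it degenerates only like the locally integrable three-dimensional singularity $|z|^{-2}$ (to be paired with the $L^\infty$ norm). The only minor technical care needed is tracking the wrap-around of $t$ near the endpoints of $[0,2\pi)$ when passing to the variable $\tau$.
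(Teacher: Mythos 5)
Your proof is correct, and it reaches the estimate by a genuinely different organization than the paper's. The underlying mechanism is the same in both arguments — the singularity at the nearest periodic copy of $x$ is paired with the $L^\infty$ norm via the local integrability of $|z|^{-2}$ in $\R^3$, while the remaining copies decay like $|n|^{-2}$ and are paired with the $L^1$ norm over one period — but the decompositions differ. The paper works in the unfolded picture: after translating so that $x=0$, it splits $\R^3$ into the central slab $\{|y_3|\le 2\pi\}$, handled by citing the standard whole-space estimate \eqref{eq standard R3} applied to $\Omega\chi_{\{|y_3|\le 2\pi\}}$, and the far slabs $\{2\pi n\le y_3\le 2\pi(n+1)\}$, $n\ne -1,0$, where $|y|\ge|y_3|\gtrsim|n|$ produces the summable factor $n^{-2}$. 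You instead fold everything onto $\R^2\times\T$ and prove a pointwise bound on the periodized kernel,
\begin{equation*}
\mathcal K(\xi,s)\lesssim \frac{1}{|\xi|^2+\operatorname{dist}(s,2\pi\mathbb Z)^2}+1,
\end{equation*}
after which a single near/far splitting (paired with $L^\infty$ and $L^1$ respectively) finishes the proof; your verification of the kernel bound ($|s-2\pi n|\ge\pi|n-n_0|$ for $n\ne n_0$, giving a tail $\le\frac{2}{\pi^2}\sum_{k\ge1}k^{-2}$) is sound. What your route buys is a clean, reusable standalone kernel estimate that makes the role of periodicity completely transparent; what the paper's route buys is brevity, since the near-diagonal computation is outsourced to the standard inequality \eqref{eq standard R3}, and consistency with the slab-by-slab summation technique it reuses later (e.g.\ in the proof of Lemma \ref{le K 3d}). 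The wrap-around issue you flag is indeed harmless: for fixed $x_3$ the map sending $t\in[0,2\pi)$ to the representative $\tau$ of $x_3-t$ in $[-\pi,\pi)$ is a measure-preserving bijection, so the near-diagonal integral is exactly $\int_{|z|\le 1}|z|^{-2}\,dz=4\pi$ as you claim.
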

\begin{proof}
First, observe that
\begin{equation*}
\int_{\R^3} \frac{1}{|x-y|^2} |\Omega(y)|\,dy=\int_{\R^3} \frac{1}{|y|^2} |\Omega(y+x)|\,dy
\end{equation*}
and for any $x\in \R^3$
\begin{equation*}
\|\Omega(\cdot+x)\|_{L^1\bigcap L^{\infty}(\R^2 \times \mathbb{T})}=\|\Omega\|_{L^1\bigcap L^{\infty}(\R^2 \times \mathbb{T})}.
\end{equation*}
So we may assume without loss of generality that $x=0$ and $\Omega \ge 0$. Then we estimate
\begin{equation}\label{eq sec3 C0}\begin{aligned}
\int_{\R^3} \frac{\Omega(y)}{|y|^2}\,dy &=\int_{\R^2}\int_{-2\pi}^{2\pi} \frac{\Omega(y)}{|y|^2}\,dy_3\,dy_1dy_2+\int_{\R^2}\int_{|y_3|\ge 2\pi}\frac{\Omega(y)}{|y|^2}\,dy.
\end{aligned}\end{equation}
For the first term in \eqref{eq sec3 C0}, denote
$$
\tilde{\Omega}(y):=\Omega(y)\chi_{|y_3|\le 2\pi} (y).
$$
Recall that
\begin{equation}\label{eq standard R3}\begin{aligned}
\int_{\R^3}\frac{|f(y)|}{|x-y|^2}\,dy
&\lesssim \|f\|_{L^1 \bigcap L^{\infty}(\R^3)},
\end{aligned}\end{equation}
so we have
\begin{equation*}
\int_{\R^2}\int_{-2\pi}^{2\pi} \frac{\Omega(y)}{|y|^2}\,dy_3\,dy_1dy_2 \lesssim \|\tilde{\Omega}\|_{L^1\bigcap L^{\infty}(\R^3)}\le 2\|\Omega\|_{L^1\bigcap L^{\infty}(\R^2 \times \mathbb{T})}.
\end{equation*}

For the second term in \eqref{eq sec3 C0}, a direct calculation shows that
\begin{equation*}\begin{aligned}
\int_{\R^2}\int_{|y_3|\ge 2\pi}\frac{\Omega(y)}{|y|^2}\,dy &= \sum_{n \neq -1,0} \int_{\R^2} \int_{2\pi n}^{2\pi(n+1)} \frac{\Omega(y)}{|y|^2}\,dy_3dy_1dy_2 \\
&\lesssim \sum_{n \neq -1,0} \int_{\R^2} \int_{2\pi n}^{2\pi(n+1)} \frac{\Omega(y)}{|y_3|^2}\,dy_3dy_1dy_2 \\
&\lesssim \sum_{n \neq -1,0} \int_{\R^2} \int_{2\pi n}^{2\pi(n+1)} \frac{\Omega(y)}{n^2}\,dy_3dy_1dy_2 \\
&=\sum_{n\neq -1,0} \frac{1}{n^2} \|\Omega\|_{L^1(\R^2\times \T)} \\
&\lesssim \|\Omega\|_{L^1(\R^2 \times \T)},
\end{aligned}\end{equation*}
which completes the proof.
\end{proof}
On the other hand, it follows from \cite{BLN} that the series in \eqref{eq sec2 C0} can be rewritten in Schloeminch series,
\begin{equation}\begin{aligned}
\sum_{n=1}^{\infty}K_{0}(n|x'|)\cos(nx_3)&=\frac{1}{2}\left(\gamma+\log\left(\frac{1}{4\pi}\right)+\log(|x'|)  \right)+\frac{\pi}{2|x|}\\
+&\frac{\pi}{2}\sum_{m=1}^{\infty}\left[ \frac{1}{\sqrt{|x'|^2+(x_3-2m\pi)^2}}-\frac{1}{2m\pi} \right] \\
+&\frac{\pi}{2}\sum_{m=1}^{\infty}\left[ \frac{1}{\sqrt{|x'|^2+(x_3+2m\pi)^2}}-\frac{1}{2m\pi} \right].
\end{aligned}\end{equation}
It has been shown that the sum above and its derivative is absolutely converges and bounded by $\langle x \rangle$. Therefore,
$$
\nabla (-\Delta_{\R^2\times \T})^{-1}f=-\frac{1}{4\pi}\int_{\R^2\times \T} \sum_{n=-\infty}^{\infty} \frac{(x_1-y_1,x_2-y_2,x_3-y_3-2n\pi)}{(|x'-y'|^2+(x_3-y_3-2n\pi)^2)^{\frac{3}{2}}}f(y)\,dy.
$$
Together with \eqref{eq sec2 C2}, we obtain the following.
\begin{lemma}
Equation \eqref{eq sec2 C1} holds for any $f\in L^1_1\bigcap L^{\infty}_1(\R^2\times \T)$. Moreover, the gradient of the Green's function is anti-symmetric,
\begin{equation}\label{eq sec2 C3}
\nabla \mathbf{G}(-x)=-\nabla \mathbf{G}(x)
\end{equation}
and
\begin{equation}\label{eq sec2 C4}
x_1\partial_2\mathbf{G}(x)-x_2\partial_1 \mathbf{G}(x)=0.
\end{equation}
\end{lemma}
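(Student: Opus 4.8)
The plan is to prove \eqref{eq sec2 C1} by showing that both of its sides equal one and the same absolutely convergent series-integral over the fundamental domain $\R^2\times\T$, and then to read off the two symmetry identities \eqref{eq sec2 C3} and \eqref{eq sec2 C4} directly from the resulting image-sum formula for $\nabla\mathbf{G}$.

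First I would handle the right-hand side of \eqref{eq sec2 C1}. Because $f$ is $2\pi$-periodic in $y_3$, I fold the integral over $\R^3$ onto $\R^2\times\T$ by grouping the periodic translates, producing exactly the series displayed in \eqref{eq sec2 C2}. The interchange of summation and integration is legitimate here precisely because Lemma \ref{le bound for u in R^3} guarantees that $\int_{\R^3}|x-y|^{-2}|f(y)|\,dy<\infty$, so the folded double series converges absolutely.

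Next I would handle the left-hand side, writing $(-\Delta_{\R^2\times\T})^{-1}f(x)=\int_{\R^2\times\T}\mathbf{G}(x-y)f(y)\,dy$. The genuine difficulty, flagged already in the text, is that one may not differentiate the $K_0$-expansion \eqref{eq sec2 C0} term by term, since the differentiated Fourier series need not be absolutely integrable against $f$. To circumvent this I would invoke the Schloemilch representation from \cite{BLN}: it exhibits $\mathbf{G}$ as its periodized $\R^3$ fundamental solution plus smooth corrections whose gradient converges absolutely and is dominated by $\langle x\rangle$. This uniform bound is what licenses differentiation under the integral and termwise summation, yielding $\nabla\mathbf{G}(x)=-\frac{1}{4\pi}\sum_{n}\frac{(x_1,x_2,x_3-2n\pi)}{(|x'|^2+(x_3-2n\pi)^2)^{3/2}}$, and hence the very same series-integral obtained on the right-hand side. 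Comparing the two computations gives \eqref{eq sec2 C1}. I expect this justification of termwise differentiability to be the main obstacle, since it is the only place where absolute convergence is genuinely delicate; everything downstream reduces to the cited bound.

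Finally, the two symmetry statements follow at once from the image-sum formula. For \eqref{eq sec2 C3}, replacing $x$ by $-x$ and reindexing $n\mapsto -n$ reverses the sign of every summand, giving $\nabla\mathbf{G}(-x)=-\nabla\mathbf{G}(x)$; equivalently, \eqref{eq sec2 C0} displays $\mathbf{G}$ as an even function, so its gradient is odd. For \eqref{eq sec2 C4}, each denominator $|x'|^2+(x_3-2n\pi)^2$ depends on $(x_1,x_2)$ only through $|x'|$, so $\partial_1\mathbf{G}$ and $\partial_2\mathbf{G}$ carry the factors $x_1$ and $x_2$ against a common radial weight, and the combination $x_1\partial_2\mathbf{G}-x_2\partial_1\mathbf{G}$ cancels term by term. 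This is simply the assertion that the rotationally invariant function $\mathbf{G}$ is annihilated by the angular derivative $x_1\partial_2-x_2\partial_1$.
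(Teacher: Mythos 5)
Your proposal is correct and follows essentially the same route as the paper: fold the $\R^3$ Biot--Savart integral onto $\R^2\times\T$ using the periodicity of $f$ (with absolute convergence supplied by Lemma \ref{le bound for u in R^3}), bypass the non-differentiable $K_0$-expansion by invoking the Schloemilch representation from \cite{BLN} whose gradient converges absolutely and is bounded by $\langle x\rangle$, and match the two resulting series-integrals to get \eqref{eq sec2 C1}. Your explicit derivations of \eqref{eq sec2 C3} (reindexing $n\mapsto -n$ in the image sum, equivalently evenness of $\mathbf{G}$) and of \eqref{eq sec2 C4} (rotational invariance of $\mathbf{G}$ in $x'$) merely spell out what the paper leaves implicit, and are correct.
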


It then follows from Lemma $3.3$ of \cite{JLN} that
$$\nabla \wedge (-\Delta_{\R^2\times \T})^{-1} \Omega (x)\cdot \xi(x) \equiv -\frac{1}{4\pi^2}\int_{\R^2\times \T} \Omega^z(y)\,dy,$$
so the velocity field in our work does not have vanishing helical swirl, which is different from that in \cite{BLN} and \cite{JLN}.

\subsection{Reduction to the two-dimensional helical Euler equation.}In this subsection, we will reduce (formally) the three-dimensional Euler equation \eqref{eq 3euler} to the two-dimensional helical equation \eqref{eq 2euler}.
\begin{lemma}\label{le helical euler absence of stretching}
Let $\Omega^z$ be a helical solution to the three-dimensional helical Euler equation \eqref{eq 3euler z} and define $\Omega(x,t)=\xi(x)\Omega^z(x,t)$, then $$U(x)=-\frac{1}{4\pi}\int_{\R^3}\frac{x-y}{|x-y|^3}\wedge \Omega(y)\,dy$$
is a helical vector field and $\Omega$ satisfies the three-dimensional Euler equation \eqref{eq 3euler}.
\end{lemma}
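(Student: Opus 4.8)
The plan is to verify the two assertions separately: first that $U$ is a helical vector field, and then that $\Omega=\xi\Omega^z$ (with $\xi(x)=(x_2,-x_1,1)$) solves the vorticity equation \eqref{eq 3euler}. Before either, I would record that $\xi$ is itself a helical vector field. A direct computation from \eqref{eq def of R theta} gives $R_{-\theta}\xi(S_\theta x)=\xi(x)$ for every $\theta$; since $\Omega^z$ is helical as a scalar, $\Omega^z(S_\theta x)=\Omega^z(x)$, and therefore $R_{-\theta}\Omega(S_\theta x)=R_{-\theta}\xi(S_\theta x)\,\Omega^z(S_\theta x)=\xi(x)\Omega^z(x)=\Omega(x)$, so $\Omega$ is helical.

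To show $U$ is helical, I would substitute $y=S_\theta z$ in the Biot--Savart integral defining $U(S_\theta x)$. Since $S_\theta$ is a rigid motion it preserves Lebesgue measure, so $dy=dz$; moreover $S_\theta x-S_\theta z=R_\theta(x-z)$ and $|S_\theta x-S_\theta z|=|x-z|$ because $R_\theta$ is orthogonal. Using the helical property of $\Omega$ in the form $\Omega(S_\theta z)=R_\theta\Omega(z)$ together with the identity $R_\theta a\wedge R_\theta b=R_\theta(a\wedge b)$, valid since $R_\theta\in SO(3)$, the integrand becomes $R_\theta\big(\tfrac{x-z}{|x-z|^3}\wedge\Omega(z)\big)$. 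Pulling the constant matrix $R_\theta$ out of the integral yields $U(S_\theta x)=R_\theta U(x)$, that is $R_{-\theta}U(S_\theta x)=U(x)$, which is exactly the helical condition of Definition \ref{def helical vector field}.

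The heart of the matter is the vorticity equation, and here the key mechanism is that the vorticity-stretching term $\Omega\cdot\nabla U$ is produced entirely by the spatial dependence of $\xi$. Writing $\Omega^i=\xi^i\Omega^z$ and using that $\xi$ is time-independent together with the scalar transport equation \eqref{eq 3euler z}, I would compute
\begin{equation*}
(\partial_t+U\cdot\nabla)\Omega^i=\xi^i(\partial_t+U\cdot\nabla)\Omega^z+(U\cdot\nabla\xi^i)\Omega^z=(U\cdot\nabla\xi^i)\Omega^z.
\end{equation*}
Since $\nabla\xi^1=(0,1,0)$, $\nabla\xi^2=(-1,0,0)$ and $\nabla\xi^3=0$, the right-hand side equals $(U^2,-U^1,0)\,\Omega^z$. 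On the other hand $\Omega\cdot\nabla U=\Omega^z\,(\xi\cdot\nabla U)$, where $\xi\cdot\nabla U^i=\partial_3 U^i-(x_1\partial_2 U^i-x_2\partial_1 U^i)$. Substituting the helical vector-field identities of Lemma \ref{le helical vector} collapses each component, giving $\xi\cdot\nabla U=(U^2,-U^1,0)$, hence $\Omega\cdot\nabla U=(U^2,-U^1,0)\Omega^z$. The two expressions coincide, so $\partial_t\Omega+U\cdot\nabla\Omega-\Omega\cdot\nabla U=0$.

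I expect the only genuinely delicate point to be this last step: the exact agreement between $\Omega\cdot\nabla U$ and the term $(U\cdot\nabla\xi)\Omega^z$ arising from $\partial_t\Omega+U\cdot\nabla\Omega$ relies crucially on Lemma \ref{le helical vector}, and in particular on the fact that the helical constraint forces $\partial_3 U$ to be expressible through the in-plane derivatives plus the rotational correction $(U^2,-U^1,0)$. Everything else is bookkeeping; as the subsection heading warns, the computation is formal, so I would assume enough regularity on $\Omega^z$ (and hence on $U$ through the Biot--Savart law) to differentiate the products and to apply the distributional identities of Lemma \ref{le helical vector} pointwise.
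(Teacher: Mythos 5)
Your proposal is correct and follows essentially the same route as the paper: the left-hand side $\partial_t\Omega+U\cdot\nabla\Omega$ is computed by the product rule together with the transport equation \eqref{eq 3euler z}, and the stretching term $\Omega\cdot\nabla U=\Omega^z(\xi\cdot\nabla U)=(U^2,-U^1,0)\Omega^z$ is collapsed using the identities of Lemma \ref{le helical vector}. The only difference is that you fill in the verification that $U$ is helical (via the substitution $y=S_\theta z$ and the identity $R_\theta a\wedge R_\theta b=R_\theta(a\wedge b)$ for $R_\theta\in SO(3)$), a step the paper dismisses as ``can be checked directly,'' so your write-up is if anything more complete.
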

\begin{proof}
The fact that $U$ defined above is a helical vector field can be checked directly. We only need to verify that \begin{equation}\label{eq sec2 B3}\partial_t \Omega +U\cdot \nabla \Omega=\Omega \cdot \nabla U.\end{equation}
For the right hand side, we apply Lemma \ref{le helical vector} to conclude that
\begin{equation*}\Omega \cdot \nabla U=\Omega^z \frac{dU}{d\xi}=(U_2,-U_1,0)\Omega^z.\end{equation*}
For the left hand side,
\begin{equation*}\begin{aligned}\partial_t x^i\Omega^z +U\cdot \nabla (x^i \Omega^z)&=x^i(\partial_t \Omega^z +U\cdot \nabla \Omega^z)+U_i \Omega^z\\
&=U_i \Omega^z,
\end{aligned}\end{equation*}
which proves \eqref{eq sec2 B3}.
\end{proof}
Since $\Omega^z$ is helical, it suffices to consider the equation for $\Omega^z(x',0,t)$.
\begin{proposition}[Two-dimensional helical Euler equation]\label{prop 3 euler}
Let $\Omega^z$ be a smooth helical solution to the three-dimensional helical Euler equation \eqref{eq 3euler z}. We define the two-dimensional modified Biot-Savart kernel
\begin{equation}\label{eq sec2 B1}
K(x,y)=-\frac{1}{4\pi} \int_{\R} \frac{(x_1,x_2,0)-\left(R_{a}(y),a\right)}{|(x_1,x_2,0)-\left(R_{a}(y),a\right)|^3} \wedge \xi(\left(R_{a}(y),a\right))\,da.
\end{equation}
Set $w(x_1,x_2)=\Omega^z(x_1,x_2,0)$,
\begin{equation}\label{eq sec2 B0}
U(x_1,x_2)=\int_{\R^2} K(x,y)w(y)\,dy
\end{equation}
and
\begin{equation*}
Hw:=H_1w+H_2w=(U^1,U^2)+(-x_2,x_1)U^3.
\end{equation*}
Then $w$ satisfies the two-dimensional helical Euler equation
\begin{equation*}
\partial_t w+Hw\cdot \nabla w=0
\end{equation*}
with $\nabla \cdot Hw=0$.

\end{proposition}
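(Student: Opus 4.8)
The plan is to realise the two-dimensional field $Hw$ as the trace on the plane $\{x_3=0\}$ of a genuine three-dimensional velocity, and then to push the three-dimensional transport equation \eqref{eq 3euler z} down to two dimensions by using the helical identities of Lemma \ref{le partial3 scalar} and Lemma \ref{le helical vector} to eliminate every $\partial_3$ derivative. Since the proposition concerns a smooth solution, all derivatives are classical and the identities hold pointwise.

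First I would identify the kernel. Writing $\Omega=\xi\,\Omega^z$ as in Lemma \ref{le helical euler absence of stretching}, the field
$$U^{3D}(x)=-\frac{1}{4\pi}\int_{\R^3}\frac{x-y}{|x-y|^3}\wedge\Omega(y)\,dy$$
is a divergence-free helical vector field. Restricting to $x=(x_1,x_2,0)$ and writing the integration variable as $z=(z',z_3)$, I substitute $z'=R_{z_3}y$; this is measure preserving, and the helicity of $\Omega^z$ gives $\Omega^z(R_{z_3}y,z_3)=\Omega^z(y,0)=w(y)$. After renaming $z_3=a$ and interchanging the order of integration, the inner $a$-integral is precisely the kernel $K(x,y)$ of \eqref{eq sec2 B1}, so the field $U$ of \eqref{eq sec2 B0} coincides with $U^{3D}(x_1,x_2,0)$. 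In particular $U^1,U^2,U^3$ are the traces on the plane of the components of a divergence-free helical vector field, which is what lets me import the three-dimensional equation and the identity \eqref{eq sec2 A0}.

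Next I would derive the equation. Evaluating \eqref{eq 3euler z} at $x_3=0$ gives $\partial_t w+(U^{3D}\cdot\nabla\Omega^z)|_{x_3=0}=0$, so it remains to identify the transport term. Expanding $U^{3D}\cdot\nabla\Omega^z=U^1\partial_1\Omega^z+U^2\partial_2\Omega^z+U^3\partial_3\Omega^z$ and substituting $\partial_3\Omega^z=x_1\partial_2\Omega^z-x_2\partial_1\Omega^z$ from Lemma \ref{le partial3 scalar}, I collect the coefficients of $\partial_1\Omega^z$ and $\partial_2\Omega^z$:
$$U^{3D}\cdot\nabla\Omega^z=(U^1-x_2U^3)\partial_1\Omega^z+(U^2+x_1U^3)\partial_2\Omega^z.$$
On the plane $\partial_j\Omega^z(x',0)=\partial_j w(x')$ for $j=1,2$, and $(U^1-x_2U^3,\,U^2+x_1U^3)=(U^1,U^2)+(-x_2,x_1)U^3=Hw$, giving $\partial_t w+Hw\cdot\nabla w=0$. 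For incompressibility I compute $\nabla\cdot Hw=\partial_1U^1+\partial_2U^2-x_2\partial_1U^3+x_1\partial_2U^3$; the three-dimensional divergence-free condition gives $\partial_1U^1+\partial_2U^2=-\partial_3U^3$ on the plane, while \eqref{eq sec2 A0} gives $\partial_3U^3=x_1\partial_2U^3-x_2\partial_1U^3$, and substituting makes the four terms cancel in pairs, so $\nabla\cdot Hw=0$.

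The only genuine obstacle is the first step: justifying the change of variables that turns the three-dimensional Biot–Savart integral into the kernel $K$, namely the interchange of the order of integration together with the absolute convergence of the inner $a$-integral along the non-compact helix. At the formal level of this subsection this is harmless; the convergence is exactly what Lemma \ref{le bound for u in R^3} is designed to control once one passes to $L^1\bigcap L^\infty$ data. Everything else is the algebraic bookkeeping of collecting the $\partial_1$ and $\partial_2$ coefficients, where the one point to watch is the rotational contribution $(-x_2,x_1)U^3$ coming from $\partial_3\Omega^z$ and its exact cancellation in the divergence computation.
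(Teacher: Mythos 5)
Your proposal is correct and follows essentially the same route as the paper: identify $U$ of \eqref{eq sec2 B0} as the planar trace of the three-dimensional Biot--Savart field via the measure-preserving change of variables $z'=R_{z_3}y$ (the paper writes it as $(z_1,z_2)=R_{-y_3}(y_1,y_2)$), eliminate $\partial_3\Omega^z$ with Lemma \ref{le partial3 scalar} to obtain $\partial_t w+Hw\cdot\nabla w=0$, and combine $\nabla\cdot U=0$ with \eqref{eq sec2 A0} to get $\nabla\cdot Hw=0$. The only difference is cosmetic ordering (kernel identification first versus last), so there is nothing further to reconcile.
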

\begin{proof}
Since $\Omega^z$ is a helical solution, it follow from \eqref{eq partial3 scalar} that

\begin{equation*}\label{eq sec2 A2}
\partial_t \Omega^z+(U^1-x_2U^3)\partial_1\Omega^z+(U_2+x_1U^3)\partial_2\Omega^z=0.
\end{equation*}
Let $x_3=0$ in the above equation, then $w(x_1,x_2)$ satisfies the two-dimensional helical Euler equation
\begin{equation*}
\partial_t w+Hw\cdot \nabla w=0,
\end{equation*}
where
\begin{equation}\label{eq sec2 B2}
Hw(x_1,x_2)=\left(U^1(x',0)-x_2U^3(x',0),U^2(x',0)+x_1U^3(x',0)\right).
\end{equation}
Next, we will show that this equation is incompressible. Indeed, the divergence of $Hw$ is
\begin{equation*}
\nabla \cdot Hw=\partial_1U^1-x_2\partial_1U^3+\partial_2U^2+x_1\partial_2U^3.
\end{equation*}
Since $U$ is a helical vector field and $\nabla \cdot U=0$, \eqref{eq sec2 A0} gives
\begin{equation*}
\nabla \cdot Hw=\partial_1U^1+\partial_2U^2+\partial_3U^3=\nabla \cdot U=0.
\end{equation*}
Finally we prove \eqref{eq sec2 B1} and \eqref{eq sec2 B0}. A direct computation shows
\[\begin{aligned}
U(x)&=-\frac{1}{4\pi} \int_{\R^3} \frac{x-y}{|x-y|^3}\wedge (y_2\Omega^z(y),-y_1\Omega^z(y),\Omega^z(y))\,dy \\
&= -\frac{1}{4\pi} \int_{\R^3} \frac{x-y}{|x-y|^3}\wedge \xi(y)w\left(R_{-y_3}(y_1,y_2)\right)\,dy.
\end{aligned}\]
Then change of variables $(z_1,z_2)=R_{-y_3}(y_1,y_2)$ yields
\begin{equation*}\begin{aligned}
U(x',0)&=-\frac{1}{4\pi} \int_{\R^3} \frac{(x',0)-\left(R_{y_3}(z'),y_3\right)}{|(x',0)-\left(R_{y_3}(z'),y_3\right)|^3} \wedge \xi(\left(R_{y_3}z',y_3\right))w(z_1,z_2)\,dz_1dz_2dy_3 \\
&=-\frac{1}{4\pi} \int_{\R^2}\left( \int_{\R} \frac{(x',0)-\left(R_{y_3}(z'),y_3\right)}{|(x',0)-\left(R_{y_3}(z'),y_3\right)|^3} \wedge \xi(\left(R_{y_3}z',y_3\right))\,dy_3\right) w(z_1,z_2)\,dz',
\end{aligned}\end{equation*}
which proves \eqref{eq sec2 B1}  and \eqref{eq sec2 B0}. Similarly, the velocity $Hw$ can be written in the form
\begin{equation}\label{eq def of H}Hw(x)=\int_{\R^2} H(x,y)w(y)\,dy:=\int_{\R^2} H_1(x,y)w(y)\,dy+\int_{\R^2} H_2(x,y)w(y)\,dy\end{equation}
for
\begin{equation}\label{eq def of H1}
H_1(x,y):=\left(  K^1(x,y),K^2(x,y) \right)
\end{equation}

and
\begin{equation}\label{eq def of H2}
H_2(x,y):=(-x_2,x_1)K^3(x,y).
\end{equation}
\end{proof}

We will now formulate the weak problem of the two-dimensional helical Euler equation \eqref{eq 2euler} and the three-dimensional helical Euler equation \eqref{eq 3euler z}.
\begin{definition}
(Weak solutions to the three-dimensional helical Euler equation.) We say that $\Omega^z(x,t)$ is a weak solution to \eqref{eq 3euler z} if
\begin{equation*}
\int_{\R^3} \Omega^z(x,t)\phi(x,t) \,dx-\int_{\R^3} \Omega^z(x,0)\phi(x,0)\,dx=\int_0^t \int_{\R^3} \Omega (\partial_t \phi +U\cdot \nabla \phi) \,dxds.
\end{equation*}
for all $t\in [0,T]$ and for all test function $\phi \in C_c^{\infty}(\R^3\times [0,+\infty))$.
\end{definition}
Similarly, we define
\begin{definition}
(Weak solutions to the two-dimensional helical Euler equation.) We say that $w(x,t)$ is a weak solution to \eqref{eq 2euler} if
\begin{equation*}
\int_{\R^2} w(x,t)\phi(x,t) \,dx-\int_{\R^2} w(x,0)\phi(x,0)\,dx=\int_0^t \int_{\R^2} w (\partial_t \phi +Hw\cdot \nabla \phi) \,dxds.
\end{equation*}
for all $t\in [0,T]$ and for all test function $\phi \in C_c^{\infty}(\R^2\times [0,+\infty))$.
\end{definition}

\begin{definition}
(Lagrangian solution) We say $w(x,t)$ is a Lagrangian solution if it is a weak solution to \eqref{eq 2euler} and satisfies the vortex transport formula
\begin{equation*}
w(X(\alpha,t),t)=w(\alpha,0),
\end{equation*}
where $X(\alpha,t)$ is the particle trajectory map associated with $Hw$.
\end{definition}

\section{Key estimates for the Biot-Savart kernel.}The main purpose of this section is to establish the necessary estimates for the modified Biot-Savart kernel $H(x,y), K(x,y)$ and $G(x,y)$ given in \eqref{eq def of H}, \eqref{eq sec2 B1} and \eqref{eq def of G(x,y)}, respectively. The estimates for the two-dimensional kernel $H(x,y)$ and $K(x,y)$ are given in Subsection $3.1$ and $3.2$. The estimates for the three-dimensional kernel $G(x,y)$ are given in Subsection $3.3$.
\subsection{Estimates for the modified Biot-Savart kernel $H$ and $K$.}

First, we derive some estimates for the modified Biot-Savart kernel $K(x,y)$ which is given in \eqref{eq sec2 B1}.
\begin{proposition}[Estimates for $K(x,y)$]\label{prop property of K(x,y)} Let $x$, $y $ be two distinct points in $\R^2$, then there holds
\begin{equation}\label{eq K decay x}
|K(x,y)|\lesssim \min\left\{\la y\ra (1+\frac{1}{|x-y|}), \la x\ra (1+\frac{1}{|x-y|})  \right\}.
\end{equation}
\end{proposition}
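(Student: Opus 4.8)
The plan is to bound $|K(x,y)|$ by a single scalar integral and then estimate that integral through the law of cosines. Write $q=(x_1,x_2,0)$ and $p(a)=(R_ay,a)$, so that
\begin{equation*}
K(x,y)=-\frac1{4\pi}\int_{\R}\frac{(q-p(a))\wedge\xi(p(a))}{|q-p(a)|^3}\,da.
\end{equation*}
Since $\xi(z)=(z_2,-z_1,1)$ has $|\xi(z)|^2=z_1^2+z_2^2+1$ and $|R_ay|=|y|$, one gets $|\xi(p(a))|=\la y\ra$; combined with $|u\wedge v|\le|u||v|$ this yields
\begin{equation*}
|K(x,y)|\le\frac{\la y\ra}{4\pi}\int_{\R}\frac{da}{|q-p(a)|^2}=:\frac{\la y\ra}{4\pi}\,I(x,y).
\end{equation*}
The whole proposition follows once I establish the single estimate
\begin{equation*}
I(x,y)\lesssim\frac{\min\{\la x\ra,\la y\ra\}}{\la y\ra}\Big(1+\frac1{|x-y|}\Big),
\end{equation*}
because multiplying by $\la y\ra$ recovers exactly $\min\{\la x\ra,\la y\ra\}(1+1/|x-y|)$. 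I also record the symmetry $I(x,y)=I(y,x)$, obtained from $a\mapsto-a$ together with $|x-R_{-a}y|=|R_ax-y|$; combined with the estimate below this lets me reduce to the case $|x|\le|y|$.

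To estimate $I$ I use $x\cdot R_ay=(x\cdot y)\cos a+(x\wedge y)\sin a=|x||y|\cos(a-\phi)$, where $\phi$ is the signed angle from $y$ to $x$, which gives
\begin{equation*}
|q-p(a)|^2=(|x|-|y|)^2+4|x||y|\sin^2\!\Big(\frac{a-\phi}2\Big)+a^2.
\end{equation*}
I would split $\R$ into the period $J_0=\{|a-\phi|\le\pi\}$, which contains the only near--diagonal zero $a=\phi$ of the sine, and the remaining periods. On $J_0$ Jordan's inequality $|\sin t|\ge\tfrac2\pi|t|$ bounds the denominator below by the genuine quadratic $(|x|-|y|)^2+\tfrac{4|x||y|}{\pi^2}(a-\phi)^2+a^2$, so $\int_{J_0}$ is at most an elementary integral of the form $\pi/\sqrt{\,\cdots}$; a short computation shows its radicand is $\gtrsim|x-y|^2$, whence $\int_{J_0}\lesssim1/|x-y|$. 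On the complement the crude bound $|q-p(a)|^2\ge a^2$ gives a convergent series bounded by a constant. This proves $I\lesssim1+1/|x-y|$, which is the $\la y\ra$ half of the $\min$ and already gives $(\star)$ whenever $\la x\ra\ge\la y\ra$.

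For the remaining case $|x|<|y|$ I must upgrade this to $I\lesssim\tfrac{\la x\ra}{\la y\ra}(1+1/|x-y|)$, and here the factors $|x||y|$ and $(|x|-|y|)^2$ have to be carried through the same decomposition rather than discarded. The guiding principle is that $I$ is large only when the helix $a\mapsto p(a)$ makes a close approach to $q$, i.e.\ when $\min_a|q-p(a)|$ is small; by the displayed formula this forces both $|x|\approx|y|$ and $\phi\approx0$, hence $x\approx y$ and $\la x\ra\approx\la y\ra$, so the large prefactor $\la y\ra$ is harmless exactly where it threatens. Away from close approaches the denominators carry the full weight $|x||y|\sim|y|^2$, and summing the per--period bounds produces the compensating gain $\la x\ra/\la y\ra$.

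The heart of the matter — and the step I expect to be the main obstacle — is precisely this refined bound: one is fighting the prefactor $\la y\ra=|\xi(p(a))|$ and must extract the factor $\la x\ra/\la y\ra$ from $I$ \emph{uniformly} over the three competing scales in the denominator, namely the radial distance $\big|\,|x|-|y|\,\big|$, the angular distance $|x||y|\sin^2(\phi/2)$, and the vertical variable $a$. Summing the contributions of the periods while respecting all three scales is delicate, with two configurations needing separate care: the equal--radius case $|x|=|y|$, where the term $(|x|-|y|)^2$ drops out and all control must come from the $\sin^2$ term, and the antipodal case $\phi\approx\pm\pi$, where the crude bound $|q-p(a)|^2\ge a^2$ becomes lossy near $a=0$ and must be replaced by the $\sin^2$ term.
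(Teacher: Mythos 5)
Your proposal is incomplete at exactly the point where the proposition has content beyond the symmetric bound: the case $|x|<|y|$ is never proved. You reduce the statement to $I(x,y)\lesssim \frac{\min\{\la x\ra,\la y\ra\}}{\la y\ra}\bigl(1+\frac{1}{|x-y|}\bigr)$, prove (modulo the issue below) the plain bound $I\lesssim 1+\frac{1}{|x-y|}$, and then for $|x|<|y|$ offer only a ``guiding principle'' while explicitly flagging the refined bound as the main obstacle. That is a gap, not a proof. Moreover, the obstacle is self-inflicted: the factor $\la x\ra/\la y\ra$ has content only when $\la y\ra\gg\la x\ra$, and in that regime there are no close approaches to fight. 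This is precisely how the paper argues: split on $|y|\ge 2|x|$ versus $|y|\le 2|x|$. If $|y|\ge 2|x|$, then for \emph{every} $a$ one has $|x-R_ay|\ge |y|-|x|\ge |y|/2$, which together with $|x-y|\le \frac32 |y|$ gives $|x-R_ay|\approx |x-y|$ uniformly in $a$; hence $I\approx\int_{\R}\frac{da}{|x-y|^2+a^2}=\frac{\pi}{|x-y|}$, and since $|y|\le 2|x-y|$ in this regime, $\la y\ra I\lesssim \frac{1+|x-y|}{|x-y|}\lesssim 1+\frac{\la x\ra}{|x-y|}$, which is the $\la x\ra$ half of the minimum. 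If instead $|y|\le 2|x|$, then $\la y\ra\lesssim\la x\ra$, the minimum is comparable to $\la y\ra\bigl(1+\frac1{|x-y|}\bigr)$, and your plain bound suffices (the paper further reduces to $|x|\le|y|\le 2|x|$ by the same symmetry $I(x,y)=I(y,x)$ that you record). So no uniform three-scale summation over periods is needed anywhere; the delicate analysis you anticipate in your final paragraph describes work that the correct case split eliminates.

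There is also an error in the part you claim to have settled. On the complement of $J_0=\{|a-\phi|\le\pi\}$, the crude bound $|q-p(a)|^2\ge a^2$ does \emph{not} give a finite integral uniformly in $\phi$: for $\phi=\pi$ the complement is $(-\infty,0)\cup(2\pi,\infty)$ and $\int_{-\infty}^0 a^{-2}\,da$ diverges, and the same problem occurs for all $|\phi|$ near $\pi$, where the complement reaches arbitrarily close to $a=0$. You do notice this (your ``antipodal case''), but you misfile it as a difficulty for the refined bound only; as written it already invalidates your proof of $I\lesssim 1+\frac{1}{|x-y|}$, i.e.\ of the half you declare done. The repair is the one you name: near $a=0$ on the complement one must keep the term $4|x||y|\sin^2\bigl(\frac{a-\phi}{2}\bigr)\approx 4|x||y|$ (equivalently, run the periodic decomposition and use the full quadratic lower bound on the periods $k=\pm1$, reserving $a^2\gtrsim k^2$ for $|k|\ge2$). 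With that fixed, your $J_0$ computation --- law of cosines, Jordan's inequality, completing the square, and the verification that the radicand is $\gtrsim|x-y|^2$ --- is correct, and it is in fact a cleaner route to the comparable-radius estimate than the paper's decomposition into angular regimes (its Cases 1, 2, 2.1, 2.2). But as submitted, the proposal neither proves the easy half rigorously nor addresses the hard half at all.
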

\begin{proof}
Note that $|\xi(y_1,y_2,a)|\lesssim \langle y \rangle$, so we have
\begin{equation}\label{eq sec3 B0}
|K(x,y)|\lesssim \int_{\R} \frac{\la y\ra}{|x-R_a(y)|^2+a^2}\,da
\end{equation}
and hence it suffices to show that
\begin{equation}\label{eq K decay y}
\int_{\R} \frac{\la y \ra}{|x-R_a(y)|^2+a^2}\,da \lesssim \min\left\{\la y\ra (1+\frac{1}{|x-y|}), \la x\ra (1+\frac{1}{|x-y|})  \right\}.
\end{equation}
 First, we consider the case when $|y|\ge 2|x|$. In this case
 \[
\frac{|x-y|}{3} \le |R_a(x)-y| \le 3|x-y|,
\]
so we have
\[
\int_{\R} \frac{\la y\ra}{|x-R_a(y)|^2+a^2}\,da \lesssim \la y \ra \int_{\R} \frac{1}{|x-y|^2+a^2} \,da \approx \frac{\la y \ra}{|x-y|} \lesssim 1+\frac{\la x \ra}{|x-y|}.
\]

Next, we consider the case when $|y|\le 2|x|$.
Note that $\la y \ra \lesssim \la x \ra$, so it suffices to show
\[
\int_0^{\infty} \frac{\la y\ra}{|x-R_a(y)|^2+a^2} \,da \lesssim \la y \ra \left(1+\frac{1}{|x-y|}\right),
\]
which is equivalent to
\begin{equation}\label{eq sec2 A3}
\int_0^{\infty} \frac{1}{|x-R_a(y)|^2+a^2} \,da \lesssim 1+\frac{1}{|x-y|}.
\end{equation}
To prove \eqref{eq sec2 A3}, we will assume without loss of generality that $2|x| \ge |y| \ge |x|$. On one hand, it follows from Triangle inequality that
\begin{equation*}
|x-y| \le  \left|x-\frac{y}{|y|}|x|\right|+|y|-|x|.
\end{equation*}
On the other hand, the fact $|y| \ge |x|$ implies
\begin{equation*}
\left|x-\frac{y}{|y|}|x|\right| \le |x-y|
\end{equation*}
and
\begin{equation*}
|y|-|x|\le |x-y|,
\end{equation*}
which yield
\begin{equation*}
\left|x-\frac{y}{|y|}|x|\right|+|y|-|x| \lesssim |x-y|.
\end{equation*}
Gathering the estimates above, we finally obtain
\begin{equation}\label{eq distance funciton}
|x-y| \approx  \left|x-\frac{y}{|y|}|x|\right|+|y|-|x|.
\end{equation}
Next we set $\theta_{x,y}=\angle xoy$ and assume without loss of generality that $\theta_{x,y} \in [0,\pi]$, then
\begin{equation}\label{eq sec3 B1}
\theta_{x,y} \approx \frac{\left|x-\frac{y}{|y|}|x|\right|}{|x|}.
\end{equation}
\emph{Case 1: $\theta_{x,y}\ge \theta_0:=10^{-4}$.} In view of \eqref{eq distance funciton} and \eqref{eq sec3 B1}, there holds
\begin{equation*}
\left| x-R_ay \right| \approx |x-y|
\end{equation*}
when $a \le \frac{\theta_0}{2}$. Thus,
\begin{equation*}\begin{aligned}
\int_0^{\infty} \frac{1}{|x-R_ay|^2+a^2}\,da &\lesssim \int_0^{\frac{\theta_0}{2}}\frac{1}{|x-y|^2+a^2}\,da +\int_{\frac{\theta_0}{2}}^\infty \frac{1}{a^2} \,da \\
&\lesssim \frac{1}{|x-y|}+1.
\end{aligned}\end{equation*}
\emph{Case 2: $\theta_{x,y}\le \theta_0$.} In this case, we have
\begin{align}
\int_0^{\infty} \frac{1}{|x-R_ay|^2+a^2}\,da \lesssim&
 \int_{\frac{\pi}{3}}^{\infty} \frac{1}{|x-R_ay|^2+a^2}\,da\label{eq sec3 B2} \\ &+\int_{2\theta_{x,y}}^{\frac{\pi}{3}} \frac{1}{|x-R_ay|^2+a^2}\,da\label{eq sec3 B3} \\ &+  \int_0^{2\theta_{x,y}} \frac{1}{|x-R_ay|^2+a^2}\,da \label{eq sec3 B4}.
\end{align}
For \eqref{eq sec3 B2}, a direct calculation shows that
\begin{equation*}
\int_{\frac{\pi}{3}}^{\infty} \frac{1}{|x-R_ay|^2+a^2}\,da \le \int_{\frac{\pi}{3}}^{\infty} \frac{1}{a^2}\,da \lesssim 1.
\end{equation*}
For \eqref{eq sec3 B3}, $\frac{\pi}{3} \ge a \ge 2\theta_{x,y}$ implies $|x-R_ay| \ge |x-y|$ and hence
\begin{equation*}
\int_{2\theta_{x,y}}^{\frac{\pi}{3}} \frac{1}{|x-R_ay|^2+a^2}\,da \lesssim \int_0^{\infty} \frac{1}{|x-y|^2+a^2} \,da=\frac{1}{|x-y|}.
\end{equation*}

For \eqref{eq sec3 B4}, we have that
\begin{equation*}\begin{aligned}
\int_0^{2\theta_{x,y}} \frac{1}{|x-R_ay|^2+a^2}\,da &\le \int_0^{\theta_{x,y}} \frac{1}{|x-R_ay|^2+a^2}\,da + \int_{\theta_{x,y}}^{2\theta_{x,y}} \frac{1}{|x-R_ay|^2+a^2}\,da \\
  &\le 2\int^{\theta_{x,y}}_{0} \frac{1}{|x-R_ay|^2+a^2}\,da
\end{aligned}\end{equation*}
since $|x-R_ay|=|x-R_{(2\theta_{x,y}-a)}y|$.
To estimate the right hand side, first we use \eqref{eq distance funciton} and \eqref{eq sec3 B1} to conclude that $$|y-x|\approx |x-\frac{y}{|y|}|x||+|y|-|x| \approx |x||\theta_{x,y}|+|y|-|x|,$$ which implies
\begin{equation*}\begin{aligned}
|x-R_ay| &\approx |x||\theta_{x,R_ay}|+|R_ay|-|x| \\  &\approx(\theta_{x,y}-a)|x|+|y|-|x|
\end{aligned}\end{equation*}
for $0 \le a \le \theta_{x,y}$. Thus,
\begin{equation*}\begin{aligned}
\int_0^{2\theta_{x,y}} \frac{1}{|x-R_ay|^2+a^2}\,da  &\lesssim \int_0^{\theta_{x,y}} \frac{1}{(\theta_{x,y}-a)^2|x|^2+(|y|-|x|)^2+a^2}\,da \\
&= \theta_{x,y} \int_0^1 \frac{1}{(1-a)^2|\theta_{x,y}x|^2+(|y|-|x|)^2+|\theta_{x,y}a|^2}\,da.
\end{aligned}\end{equation*}
\emph{Case 2.1, $|y|-|x|\le \theta_{x,y}|x|$.} From \eqref{eq distance funciton} and \eqref{eq sec3 B1}, we see that $|y-x| \approx \theta_{x,y}|x|$ and hence
\begin{equation*}\begin{aligned}
\int_0^{2\theta_{x,y}} \frac{1}{|x-R_ay|^2+a^2}\,da &\lesssim \theta_{x,y}\int_0^1\frac{1}{(1-a)^2\theta_{x,y}^2|x|^2+\theta_{x,y}^2a^2}\,da \\
&= \frac{1}{\theta_{x,y}}\int_0^1 \frac{1}{(1-a)^2|x|^2+a^2}\,da.
\end{aligned}\end{equation*}
Then a direct calculation gives
\begin{equation*}\begin{aligned}
\int_0^1 \frac{1}{(1-a)^2|x|^2+a^2}\,da=&\int_0^1 \frac{1}{a^2|x|^2+(a-1)^2}\,da \\
\le& \int_{-\infty}^{\infty} \frac{1}{a^2|x|^2+(a-1)^2}\,da \\
=&  \frac{\pi}{|x|},
\end{aligned}\end{equation*}
which implies (recalling that $|y-x| \approx \theta_{x,y}|x|$ )
\begin{equation*}\begin{aligned}
\int_0^{2\theta_{x,y}} \frac{1}{|x-R_ay|^2+a^2}\,da &\lesssim \frac{1}{\theta_{x,y}|x|} \lesssim \frac{1}{|x-y|}.
\end{aligned}\end{equation*}

\emph{Case 2.2, $|y|-|x|\ge \theta_{x,y}|x|$.} Using the estimates \eqref{eq distance funciton} and \eqref{eq sec3 B1}, it follows that $|y-x| \approx |y|-|x|$. Therefore,
\begin{equation*}\begin{aligned}
\int_0^{2\theta_{x,y}} \frac{1}{|x-R_ay|^2+a^2}\,da &\lesssim \theta_{x,y} \int_0^1 \frac{1}{(|y|-|x|)^2+|\theta_{x,y}a|^2}\,da \\
&\lesssim \int_0^{\infty} \frac{1}{(|y|-|x|)^2+a^2}\,da \\
&\lesssim \frac{1}{|y|-|x|} \lesssim \frac{1}{|y-x|}.
\end{aligned}\end{equation*}
Gathering these estimates, we get \eqref{eq K decay y} and \eqref{eq K decay x}.
\end{proof}
Next, we derive some estimates for the modified Biot-Savart kernel $H(x,y)$.
\begin{proposition}[Estimates for $H(x,y)$]\label{co mei 1111} Let $x, y$ be two distinct points in $\R^2$ , then there holds
\begin{equation}\label{eq H_1}
|H_1(x,y)|\lesssim \min\left\{\la y\ra (1+\frac{1}{|x-y|}), \la x\ra (1+\frac{1}{|x-y|})  \right\}
\end{equation}
and
\begin{equation}\label{eq H_2}
|H_2(x,y)| \lesssim \min{  \left\{\la x \ra^2(1+\frac{1}{|x-y|}), \la y \ra^2(1+\frac{1}{|x-y|}), \la x\ra \la y\ra(1+\frac{1}{|x-y|})   \right\}            }.
\end{equation}
\end{proposition}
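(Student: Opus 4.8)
The plan is to reduce both estimates to the bound on $K(x,y)$ already proved in Proposition \ref{prop property of K(x,y)}, and to treat the one genuinely new ingredient — a control of $H_2$ by $\la y\ra^2$ — by a direct computation with the explicit integrand of \eqref{eq sec2 B1}.

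First I would dispose of \eqref{eq H_1}. Since $H_1(x,y)=(K^1(x,y),K^2(x,y))$ is simply the horizontal part of $K(x,y)=(K^1,K^2,K^3)$, one has $|H_1(x,y)|\le |K(x,y)|$, so \eqref{eq H_1} is exactly \eqref{eq K decay x}. For $H_2$, recall from \eqref{eq def of H2} that $H_2(x,y)=(-x_2,x_1)K^3(x,y)$, so $|H_2(x,y)|=|x|\,|K^3(x,y)|\le \la x\ra |K(x,y)|$. Feeding the two bounds of \eqref{eq K decay x} into this inequality immediately produces the first and third entries of the minimum in \eqref{eq H_2}, namely $|H_2(x,y)|\lesssim \la x\ra^2(1+\tfrac{1}{|x-y|})$ and $|H_2(x,y)|\lesssim \la x\ra\la y\ra(1+\tfrac{1}{|x-y|})$.

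The only remaining bound is $|H_2(x,y)|\lesssim \la y\ra^2(1+\tfrac{1}{|x-y|})$, which I would prove by splitting into two regimes. When $|x|\le 2|y|$ we have $\la x\ra\lesssim \la y\ra$, so the third bound just obtained is itself $\lesssim \la y\ra^2(1+\tfrac{1}{|x-y|})$ and nothing more is needed. In the opposite regime $|x|>2|y|$ the crude domination $|H_2|\le \la x\ra|K|$ is useless, since the rotational prefactor $|x|$ may far exceed $\la y\ra$; here I must extract genuine decay from $K^3$ itself. Taking the third coordinate of the cross product in \eqref{eq sec2 B1} with $\xi(z)=(z_2,-z_1,1)$ and using $|R_a(y)|=|y|$ gives
\begin{equation*}
K^3(x,y)=-\frac{1}{4\pi}\int_{\R}\frac{|y|^2-x\cdot R_a(y)}{\bigl(|x-R_a(y)|^2+a^2\bigr)^{3/2}}\,da .
\end{equation*}
When $|x|>2|y|$ the numerator satisfies $\bigl||y|^2-x\cdot R_a(y)\bigr|\le |y|^2+|x|\,|y|\le 2|x|\,|y|$, while $|x-R_a(y)|\ge |x|-|y|\ge |x|/2$ forces $|x-R_a(y)|^2+a^2\ge \tfrac14(|x|^2+a^2)$. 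Since $\int_{\R}(|x|^2+a^2)^{-3/2}\,da=2|x|^{-2}$, I obtain $|K^3(x,y)|\lesssim |x|\,|y|\cdot|x|^{-2}=|y|/|x|$, hence $|H_2(x,y)|=|x|\,|K^3(x,y)|\lesssim |y|\le \la y\ra^2\le \la y\ra^2(1+\tfrac{1}{|x-y|})$. Combining the two regimes completes \eqref{eq H_2}.

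The main obstacle is precisely this last bound in the regime $|x|>2|y|$: the pointwise estimate $|H_2|\le \la x\ra|K|$ loses a factor $|x|/\la y\ra$ that can be arbitrarily large, so I cannot rely on $|K^3|\le|K|$. The key observation that saves the argument is that the numerator $|y|^2-x\cdot R_a(y)$ of $K^3$ is only \emph{linear} in the large parameter $|x|$, and this linearity is exactly what yields the extra decay $|K^3|\lesssim |y|/|x|$ needed to absorb the prefactor $|x|$ in $H_2$.
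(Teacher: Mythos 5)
Your proof is correct, and for \eqref{eq H_1} together with the first and third bounds in \eqref{eq H_2} it coincides with the paper's argument (both reduce to Proposition \ref{prop property of K(x,y)} via $|H_1|\le |K|$ and $|H_2|\le \la x\ra |K|$). Where you genuinely diverge is the remaining bound $|H_2(x,y)|\lesssim \la y\ra^2(1+\tfrac{1}{|x-y|})$. The paper obtains it in one line by a symmetry trick: since $|y-R_ax|=|x-R_{-a}y|$, it writes $|H_2(x,y)|\le\la x\ra|K(x,y)|\lesssim \la y\ra\int_{\R}\la x\ra\bigl(|y-R_ax|^2+a^2\bigr)^{-1}\,da$ and then applies the integral estimate \eqref{eq K decay y} with the roles of $x$ and $y$ interchanged, which supplies the factor $\la y\ra(1+\tfrac{1}{|x-y|})$ with no new computation. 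You instead split into the regimes $|x|\le 2|y|$ (where the mixed bound $\la x\ra\la y\ra$ already suffices) and $|x|>2|y|$, where you compute the third component $K^3$ explicitly and exploit that its numerator $|y|^2-x\cdot R_a(y)$ is only linear in the large variable, yielding $|K^3(x,y)|\lesssim |y|/|x|$. Your route is more computational, but it proves something strictly stronger in the far regime: $|H_2(x,y)|\lesssim |y|$ there, with no singular factor $1/|x-y|$ at all, making the cancellation structure of the swirl component visible, which could be of independent use. The paper's route is shorter and reuses existing machinery, but does not expose this extra decay. Both arguments are complete and correct.
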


\begin{proof}
We only prove \eqref{eq H_2} since \eqref{eq H_1} is a direct consequence of \eqref{eq sec2 B1}, \eqref{eq def of H1} and \eqref{eq K decay y}. From \eqref{eq def of H2} and \eqref{eq K decay x}, we have that
\begin{equation*}
|H_2(x,y)| \lesssim \la x \ra^2(1+\frac{1}{|x-y|})
\end{equation*}
and
\begin{equation*}
|H_2(x,y)| \lesssim \la y\ra \la x \ra(1+\frac{1}{|x-y|}).
\end{equation*}
Therefore, it remains to show
\begin{equation*}
|H_2(x,y)| \lesssim \la y\ra^2(1+\frac{1}{|x-y|}).
\end{equation*}
In fact, it follows from \eqref{eq def of H2} that
\begin{equation*}\begin{aligned}
|H_2(x,y)|\le \la x\ra|K(x,y)| &\le  \la y\ra\int_{\R} \frac{\la x\ra}{|y-R_ax|^2+a^2}\,da.
\end{aligned}\end{equation*}
Observe that $|y-R_ax|=|x-R_{-a}y|$, so inequality \eqref{eq K decay x} yields
\begin{equation*}
\int_{\R} \frac{\la x\ra}{|y-R_ax|^2+a^2}\,da \lesssim \la y\ra(1+\frac{1}{|x-y|})
\end{equation*}
and hence \eqref{eq H_2} follows.
\end{proof}

Next, we derive some useful estimates for the velocity field $Hw=H_1w+H_2w$.
\begin{proposition}[Estimates for $Hw(x)$]\label{prop boundness of H(x,y)} For any $x\in \R^2$, it holds that
\begin{equation}\label{eq boundness for H_1w}
|H_1w(x)| \lesssim \min \left\{ \|w\|_{L^1_1 \bigcap L^{\infty}_1} ,  \la x\ra\|w\|_{L^1 \bigcap L^{\infty}}      \right\}
\end{equation}
and
\begin{equation*}\label{eq boundness for H_2w}
|H_2w(x)| \lesssim \min \left\{ \la x\ra\|w\|_{L^1_1 \bigcap L^{\infty}_1} , \la x\ra^2\|w\|_{L^1 \bigcap L^{\infty}} , \|w\|_{L^1_2 \bigcap L^{\infty}_2}    \right\}.
\end{equation*}
\end{proposition}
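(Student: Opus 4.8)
The plan is to reduce both estimates to the pointwise kernel bounds \eqref{eq H_1} and \eqref{eq H_2} already established, combined with a single elementary convolution inequality. The starting point is the representation $Hw(x)=\int_{\R^2}H(x,y)w(y)\,dy$ from \eqref{eq def of H}, so that by the triangle inequality $|H_iw(x)|\le \int_{\R^2}|H_i(x,y)|\,|w(y)|\,dy$ for $i=1,2$, and everything then reduces to integrating the kernel bounds against $|w|$.

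The one auxiliary estimate I need is: for any $g\in L^1\bigcap L^\infty(\R^2)$,
\begin{equation*}
\int_{\R^2}\Big(1+\frac{1}{|x-y|}\Big)|g(y)|\,dy \lesssim \|g\|_{L^1\bigcap L^\infty}.
\end{equation*}
I would prove this by noting $\int_{\R^2} 1\cdot|g|\,dy=\|g\|_{L^1}$ and splitting the singular part at $|x-y|=1$: on the near region $\int_{|x-y|\le 1}\frac{|g(y)|}{|x-y|}\,dy\le \|g\|_{L^\infty}\int_{|z|\le1}\frac{dz}{|z|}\lesssim \|g\|_{L^\infty}$ (the integral over the unit disk being finite in dimension two), while on the far region $\int_{|x-y|>1}\frac{|g(y)|}{|x-y|}\,dy\le \|g\|_{L^1}$.

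With this in hand, each of the six components of the two minima follows by selecting the matching branch of the kernel bound and absorbing the spatial weight into $g$. For $H_1w$: pulling the factor $\la x\ra$ in \eqref{eq H_1} outside the integral gives $|H_1w(x)|\lesssim \la x\ra\int(1+\tfrac{1}{|x-y|})|w(y)|\,dy\lesssim \la x\ra\|w\|_{L^1\bigcap L^\infty}$; using instead the $\la y\ra$ branch and setting $g=\la\cdot\ra w$ gives $|H_1w(x)|\lesssim \|g\|_{L^1\bigcap L^\infty}=\|w\|_{L^1_1\bigcap L^\infty_1}$, since $\|\la\cdot\ra^m w\|_{L^p}=\|w\|_{L^p_m}$ directly from the definitions of the weighted norms. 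The three bounds for $H_2w$ are entirely analogous: the $\la x\ra^2$ branch of \eqref{eq H_2} yields $\la x\ra^2\|w\|_{L^1\bigcap L^\infty}$, the $\la x\ra\la y\ra$ branch (with $g=\la\cdot\ra w$) yields $\la x\ra\|w\|_{L^1_1\bigcap L^\infty_1}$, and the $\la y\ra^2$ branch (with $g=\la\cdot\ra^2 w$) yields $\|w\|_{L^1_2\bigcap L^\infty_2}$.

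I do not expect a genuine obstacle here: the pointwise kernel estimates \eqref{eq H_1} and \eqref{eq H_2} carry all of the analytic difficulty, and once they are available the proposition is essentially bookkeeping. The only point requiring a little care is matching the spatial weight appearing in each branch of the kernel bound to the correct weighted norm of $w$ — that is, recognizing $\|\la\cdot\ra^m w\|_{L^1\bigcap L^\infty}=\|w\|_{L^1_m\bigcap L^\infty_m}$ — and exploiting the $\min$ structure of \eqref{eq H_1} and \eqref{eq H_2} so that each asserted estimate is derived from its corresponding branch.
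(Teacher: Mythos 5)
Your proposal is correct and follows essentially the same route as the paper: the paper likewise reduces \eqref{eq boundness for H_1w} to the two branches of the kernel bound \eqref{eq H_1}, absorbs the weight $\la y\ra$ into $w$ via the identity $\|\la\cdot\ra^m w\|_{L^p}=\|w\|_{L^p_m}$, and invokes the standard potential estimate $\int_{\R^2}|f(y)|\,|x-y|^{-1}\,dy\lesssim\|f\|_{L^1\bigcap L^{\infty}}$, which is exactly your auxiliary convolution inequality. The only differences are cosmetic: you spell out the proof of that potential estimate and the three branches of the $H_2w$ case, whereas the paper states the former as an observation and dismisses the latter as ``similar.''
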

\begin{proof}We only prove \eqref{eq boundness for H_1w} since the other one is similar.
For $H_1w$, recall that
\begin{equation*}\begin{aligned}
|H_1w(x)|=\left| \int_{\R^2} H_1(x,y)w(y) \,dy\right|.
\end{aligned}\end{equation*}
So by \eqref{eq H_1}, on one hand,
\begin{equation}\label{eq sec3 B5}\begin{aligned}
|H_1w(x)| &\lesssim \la x\ra \int_{\R^2} \left(1+\frac{1}{|x-y|}\right)|w(y)|\,dy\\ &\le \la x\ra\left(\|w\|_{L^1}+\int_{\R^2}\frac{|w(y)|}{|x-y|}\,dy\right).
\end{aligned}\end{equation}
On the other hand,
\begin{equation}\begin{aligned}\label{eq sec3 A1}
|H_1w(x)| &\lesssim \int_{\R^2} \left(1+\frac{1}{|x-y|}\right)\la y\ra|w(y)|\,dy\\ &\le \|w\|_{L^1_1}+\int_{\R^2}\frac{|\la y\ra w(y)|}{|x-y|}\,dy.
\end{aligned}\end{equation}
Observe that for a scalar function $f \in L^1 \bigcap L^{\infty}(\R^2)$, there holds
\begin{equation}\begin{aligned}\label{eq sec3 A2}
\int_{\R^2} \frac{|f(y)|}{|x-y|}\,dy \lesssim  \|f\|_{L^1\bigcap L^{\infty}}.
\end{aligned}\end{equation}
So using \eqref{eq sec3 B5}, \eqref{eq sec3 A1} and \eqref{eq sec3 A2}, we  finally get the desired estimates for $H_1w$.
\end{proof}

\subsection{Osgood property of $Hw$.}
In this subsection, we will show that the particle trajectory map $X(\alpha,t)$ given by
\begin{equation*}\begin{cases}\begin{aligned}
\frac{dX(\alpha,t)}{dt}&= Hw(X(\alpha,t),t)\\
X(\alpha,0)&=\alpha
\end{aligned}\end{cases}\end{equation*}
is well-defined. To this end, it suffices to show that the velocity $Hw$ satisfies the Osgood's condition. More precisely, we will show that
\begin{equation*}
\sup_{x,y \in B_R(0)} \frac{|Hw(x)-Hw(z)|}{F(|x-z|)}\lesssim_R 1
\end{equation*}
for $F$ the Log-Lipschitz function given in \eqref{eq def of F}.
Recalling that $Hw(x)=(U^1,U^2)+(-x_2,x_1)U^3$, so it follow that
\begin{equation}\begin{aligned}
\left| Hw(x)-Hw(z) \right| &\lesssim |x-z||U(x)|+\la z\ra|U(x)-U(z)|.
\end{aligned}\end{equation}

Then by the definition of $U$,
\begin{equation*}
U(x)-U(z)=\int_{\R^2} \left(K(x,y)-K(z,y)\right)w(y)\,dy,
\end{equation*}
which implies that (using \eqref{eq sec2 B1})

\begin{equation}\begin{aligned}\label{eq hw(x)-hw(z) 1}
&\left| Hw(x)-Hw(z) \right|  \\ \lesssim& \int_{\R^2} \int_{\R} \left| \frac{(x_1,x_2,0)-\left(R_{a}(y),a\right)}{|(x_1,x_2,0)-\left(R_{a}(y),a\right)|^3}-\frac{(z_1,z_2,0)-\left(R_{a}(y),a\right)}{|(z_1,z_2,0)-\left(R_{a}(y),a\right)|^3}\right|\,da \la z\ra \la y\ra|w(y)|\,dy \\ &+|x-z||U(x)|.
\end{aligned}\end{equation}
For the last term, we use Proposition \ref{prop boundness of H(x,y)} to conclude that
\begin{equation}\label{eq hw(x)-hw(z) 2}
|x-z||U(x)| \lesssim F(x-z)\la x\ra\|w\|_{L^1\bigcap L^{\infty}}
\end{equation}
and
\begin{equation}\label{eq hw(x)-hw(z) 3}
|x-z||U(x)| \lesssim F(x-z)\|w\|_{L^1_1\bigcap L^{\infty}_1}.
\end{equation}
The first term can be bounded by
\begin{equation}\label{eq sec3 B6}
\int_{\R^3}  \left| \frac{(x',0)-(y',y_3)}{|(x',0)-(y',y_3)|^3}-\frac{(z',0)-(y',y_3)}{|(z',0)-(y',y_3)|^3}\right| \la z'\ra\la y' \ra|\Omega^z(y)|\,dy
\end{equation}
after a change of variables $R_ay=\tilde{y}$.
To estimate this integral, we have the following:

\begin{lemma}\label{le sec3 A0}For any $x, z \in \R^3$, it holds that
\begin{equation*}
\int_{\R^3}  \left| \frac{x-y}{|x-y|^3}-\frac{z-y}{|z-y|^3}\right| \la y'\ra|\Omega^z(y)|\,dy \lesssim (\la x\ra+\la z\ra)\|\Omega^z\|_{L^1\bigcap L^{\infty}(\R^2\times \T)}.
\end{equation*}
\end{lemma}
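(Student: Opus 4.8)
The plan is to bound the kernel difference $\bigl|\frac{x-y}{|x-y|^3}-\frac{z-y}{|z-y|^3}\bigr|$ in two different ways depending on whether $y$ lies near or far from $x$ and $z$, and in each regime to absorb the weight $\la y'\ra$ into the decay of the kernel so that only the model singular integral $\int_{\R^3}|x-y|^{-2}|\Omega^z(y)|\,dy$, controlled by Lemma \ref{le bound for u in R^3}, remains. Concretely I split the $y$-integral into the near region $N:=\{|y'|\le 2(\la x\ra+\la z\ra)\}$ and the far region $F:=\{|y'|>2(\la x\ra+\la z\ra)\}$.

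On $N$ we have $\la y'\ra\lesssim \la x\ra+\la z\ra$, so the weight is already controlled by the target prefactor. Using the triangle inequality $\bigl|\frac{x-y}{|x-y|^3}-\frac{z-y}{|z-y|^3}\bigr|\le |x-y|^{-2}+|z-y|^{-2}$ and extending the domain from $N$ to all of $\R^3$, the near contribution is at most $(\la x\ra+\la z\ra)\int_{\R^3}\bigl(|x-y|^{-2}+|z-y|^{-2}\bigr)|\Omega^z(y)|\,dy$, and two applications of Lemma \ref{le bound for u in R^3} finish this piece.

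On $F$ the crude bound is too lossy: since $\Omega^z$ is only periodic in $y_3$, the integral $\int_{\R^3}|x-y|^{-1}|\Omega^z(y)|\,dy$ diverges, so the cancellation between the two kernels must be used. Writing $K(v)=v/|v|^3$ with $|\nabla K(v)|\lesssim |v|^{-3}$ and applying the mean value theorem along the segment $[x,z]$ gives $|K(x-y)-K(z-y)|\lesssim |x-z|\sup_{w\in[x,z]}|w-y|^{-3}$. The geometric point is that on $F$ the entire segment stays at a comparable distance from $y$: every $w\in[x,z]$ satisfies $|w'|\le\max(|x'|,|z'|)\le |y'|/2$, and separating the cases where the vertical or the horizontal separation dominates one checks that $|w-y|\approx|x-y|\approx|z-y|$, whence $|K(x-y)-K(z-y)|\lesssim |x-z|\,|x-y|^{-3}$. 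Moreover $|y'-x'|\ge |y'|/2$ on $F$ yields $\la y'\ra\lesssim |y'|\lesssim |x-y|$, so $\la y'\ra\,|x-y|^{-3}\lesssim |x-y|^{-2}$; since $|x-z|\le \la x\ra+\la z\ra$, the far contribution is at most $(\la x\ra+\la z\ra)\int_{\R^3}|x-y|^{-2}|\Omega^z(y)|\,dy$, again controlled by Lemma \ref{le bound for u in R^3}.

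I expect the only delicate step to be the uniform distance comparison $|w-y|\approx|x-y|$ over the segment in the far region; this is exactly what licenses replacing the supremum in the mean value bound by $|x-y|^{-3}$, and, combined with $\la y'\ra\lesssim |x-y|$, reduces everything to the periodic singular integral of Lemma \ref{le bound for u in R^3}. The periodicity of $\Omega^z$ is what forces this cancellation argument in the first place, since $|x-y|^{-1}$ fails to be integrable against a periodic function.
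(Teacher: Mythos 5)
Your proof is correct, but it takes a genuinely different and more elementary route than the paper's. The paper never proves this lemma in isolation: its content is folded into Corollary \ref{co log-lip in R^3}, whose inequality \eqref{eq log-lip in R^3} bounds the same weighted integral. There the decomposition is by balls, $|y|\le \max\{1,5|x|,5|z|\}$ versus $|y|\ge\max\{1,5|x|,5|z|\}$: on the near region the weight $\la y'\ra\lesssim \la x\ra+\la z\ra$ is pulled out and the \emph{unweighted} cancellation estimate of Lemma \ref{le K 3d} (inequality \eqref{eq bound for R3 kernel}, the technical heart of Subsection 3.3) is invoked; on the far region the mean value theorem gives $|K(x-y)-K(z-y)|\lesssim|x-z|/|y|^3$ because the whole segment $[x,z]$ has norm at most $|y|/5$, and then $\la y'\ra/|y|^3\lesssim|y|^{-2}$ reduces matters to \eqref{eq le bound for u}. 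You instead decompose by cylinders $|y'|\lessgtr 2(\la x\ra+\la z\ra)$, handle the near region with the crude triangle inequality $|K(x-y)-K(z-y)|\le|x-y|^{-2}+|z-y|^{-2}$ plus two applications of Lemma \ref{le bound for u in R^3}, and in the far region run the same mean-value argument with distances controlled horizontally; your ``delicate step'' $|w-y|\approx|x-y|\approx|z-y|$ on the segment is indeed correct (every $w\in[x,z]$ has $|w'|\le|y'|/2$, so all horizontal distances are comparable to $|y'|$, while the vertical separations of $x,z,w$ from $y$ differ by at most $|x-z|\le\la x\ra+\la z\ra\le|y'|/2$). The payoff of your route is that it bypasses Lemma \ref{le K 3d} entirely.

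What your route cannot deliver --- and this is worth flagging --- is a modulus of continuity. The triangle-inequality bound in your near region discards all cancellation between the two kernels, so it can only produce an $O(\la x\ra+\la z\ra)$ bound, never one that vanishes as $x\to z$. The statement as printed has no such factor, so you have proved exactly what was asked; but the way the paper uses the lemma (immediately after its statement, to conclude that $Hw$ is locally Osgood continuous, and again in Appendix B to obtain $D_n(t)\lesssim\int_0^t F(D_{n+1}(s))\,ds$) requires the stronger version carrying the factor $F(|x-z|)$ of \eqref{eq def of F} on the right-hand side, which is what Corollary \ref{co log-lip in R^3} actually provides; the printed statement appears to have dropped that factor. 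To recover it, your near region would also have to exploit cancellation, which is precisely the content of Lemma \ref{le K 3d}; your far region survives unchanged, since $|x-z|\le F(|x-z|)$.
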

Together with \eqref{eq hw(x)-hw(z) 1} and \eqref{eq sec3 B6}, we see that $Hw$ is locally Osgood continuous and hence the particle trajectory map $X(\alpha,t)$ associated with $Hw$ is well-defined.
The proof of this lemma will need some estimates of the three-dimensional Biot-Savart kernel, which will be postponed to the next subsection.

\subsection{Estimates for the three-dimensional modified Biot-Savart kernel.}
In $\R^3$, the velocity field with helical symmetry can be recovered by the third component of the vorticity:
\begin{equation*}\begin{aligned}
U(x)&=\int_{\R^3} G(x,y) \Omega^z(y)\,dy,
\end{aligned}\end{equation*}
where
\begin{equation}\label{eq def of G(x,y)}
G(x,y):=\frac{x-y}{|x-y|^3} \wedge \xi(y).
\end{equation}
The main purpose of this subsection is to give detailed estimates of $G(x,y)$. First, as a consequence of Lemma \ref{le bound for u in R^3}, we obtain
\begin{equation}\label{eq bound for U in R^3}
|U(x)|\lesssim \|\Omega^z\|_{L^1_1\bigcap L^{\infty}_1(\R^2 \times \mathbb{T})}
\end{equation}
for any $x \in \R^3$. Furthermore, if we assume in addition that $\Omega^z$ is helical, then for any $x\in \R^3$, there holds
\begin{equation*}\begin{aligned}
|U(x)|&\lesssim \la x'\ra\|\Omega^z\|_{L^1\bigcap L^{\infty}(\R^2 \times \mathbb{T})}.
\end{aligned}\end{equation*}
Indeed, by definition of $U$ we have (recall that $w(x_1,x_2)=\Omega^z(x_1,x_2,0)$)
\begin{equation*}\begin{aligned}\label{eq bound for U in R^3 2}
|U(x_1,x_2,0)| &\le \left|\int_{\R^3} \frac{\la y' \ra}{|(x',0)-(y',y_3)|^2} |\Omega^z(y)|\,dy\right|\\
&=\int_{\R^2}|w(y_1,y_2)|\int_{\R}\frac{\la y'\ra}{|x'-R_{y_3}y'|^2+y_3^2}\,dy_3dy'.
\end{aligned}\end{equation*}
Then \eqref{eq K decay y} and Lemma \ref{le norm of R^2 and R^3} yield
\begin{equation*}\begin{aligned}
|U(x_1,x_2,0)|&\lesssim \la x' \ra\|w\|_{L^1\bigcap L^{\infty}(\R^2)} \\
&\lesssim \la x' \ra\|\Omega^z\|_{L^1\bigcap L^{\infty}(\R^2 \times \mathbb{T})},
\end{aligned}\end{equation*}

which implies
\begin{equation*}\begin{aligned}
|U(x',x_3)|&=|U(R_{-x_3}x',0)| \\
&\lesssim \la x' \ra\|\Omega^z\|_{L^1\bigcap L^{\infty}(\R^2 \times \mathbb{T})}
\end{aligned}\end{equation*}
since $U$ is a helical vector field.

Next, we estimate the difference
\begin{equation*}\begin{aligned}
|U(x)-U(z)| &=\left|  \int_{\R^3} \left( \frac{x-y}{|x-y|^3}-\frac{z-y}{|z-y|^3}  \right) \wedge \xi(y)\Omega^z(y)\,dy \right|  \\
&\lesssim \int_{\R^3} \left| \frac{x-y}{|x-y|^3}-\frac{z-y}{|z-y|^3}  \right|\la y' \ra|\Omega^z(y)|\,dy.
\end{aligned}\end{equation*}

\begin{lemma}\label{le K 3d}
Let $\Omega(x)$ be a helical function, then for any $x, z \in \R^3$, it holds that
\begin{equation}\label{eq bound for R3 kernel}
\int_{\R^3} \left| \frac{x-y}{|x-y|^3}-\frac{z-y}{|z-y|^3}  \right| |\Omega(y)|\,dy \lesssim \|\Omega\|_{L^1\bigcap L^{\infty}(\R^2 \times \mathbb{T})}F(|x-z|),
\end{equation}
where $F$ is the Log-Lipschitz function given in \eqref{eq def of F}.
\end{lemma}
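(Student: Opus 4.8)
The plan is to reduce to the regime of small $|x-z|$ and then split the integral into a near-singularity piece and a far piece, exactly as in the classical proof of the Log-Lipschitz continuity of the two-dimensional Biot--Savart kernel; the only genuinely new feature is that periodicity in $x_3$ prevents us from bounding the tail by a full $L^1(\R^3)$ norm, so we must route that part through Lemma \ref{le bound for u in R^3}. (Note that helicity enters only through $2\pi$-periodicity in $x_3$, which is all Lemma \ref{le bound for u in R^3} requires.) Write $d:=|x-z|$. When $d>\frac1e$ I would argue directly: since $\bigl|\frac{x-y}{|x-y|^3}\bigr|=\frac{1}{|x-y|^2}$, Lemma \ref{le bound for u in R^3} bounds each of the two terms by $\|\Omega\|_{L^1\bigcap L^{\infty}}$, hence so is their difference, and this is $\lesssim \|\Omega\|_{L^1\bigcap L^{\infty}}F(d)$ because $F(d)\ge F(\frac1e)\gtrsim 1$. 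It thus remains to treat $d\le \frac1e$, where $F(d)=d(1-\log d)$.

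For small $d$ I would decompose $\R^3=B\cup B^c$ with $B=\{y:|x-y|\le 2d\}$. On the near region $B$ I estimate the two kernels separately. Since $\|\Omega\|_{L^{\infty}(\R^3)}=\|\Omega\|_{L^{\infty}(\R^2\times\T)}$ by periodicity and $\int_{|x-y|\le 2d}|x-y|^{-2}\,dy\lesssim d$ — the three-dimensional volume element $r^2\,dr$ absorbing the singularity — the $x$-term contributes $\lesssim \|\Omega\|_{L^{\infty}}d$; the $z$-term is handled identically after observing $B\subset\{y:|z-y|\le 3d\}$. Both are $\lesssim \|\Omega\|_{L^1\bigcap L^{\infty}}F(d)$ since $d\le F(d)$.

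On the far region $B^c$ I would apply the mean value theorem to the map $v\mapsto v/|v|^3$ along the segment $[x,z]$. Because $|x-y|>2d$ forces $|w-y|\ge \frac12|x-y|$ for every $w\in[x,z]$, and $|\nabla(v/|v|^3)|\lesssim |v|^{-3}$, I obtain the pointwise bound $\bigl|\frac{x-y}{|x-y|^3}-\frac{z-y}{|z-y|^3}\bigr|\lesssim \frac{d}{|x-y|^3}$. I then split $B^c$ once more into the annulus $2d<|x-y|\le 1$ and the tail $|x-y|>1$. On the annulus, $\int_{2d<|x-y|\le 1}|x-y|^{-3}\,dy\lesssim \log\frac1d$, so this piece is $\lesssim \|\Omega\|_{L^{\infty}}\,d\log\frac1d\lesssim \|\Omega\|_{L^1\bigcap L^{\infty}}F(d)$ using $-d\log d\le d(1-\log d)$; this is the step that generates the logarithm of $F$ and is the heart of the estimate. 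On the tail, $|x-y|^{-3}\le |x-y|^{-2}$, so Lemma \ref{le bound for u in R^3} gives a contribution $\lesssim d\,\|\Omega\|_{L^1\bigcap L^{\infty}}\le \|\Omega\|_{L^1\bigcap L^{\infty}}F(d)$. Summing the four pieces yields the claim.

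The point demanding the most care is precisely that $\Omega$ is not globally integrable on $\R^3$: one cannot dominate the $|x-y|^{-3}$ tail by $\|\Omega\|_{L^1(\R^3)}$, so periodicity must be exploited through Lemma \ref{le bound for u in R^3}, which is exactly why the far region is cut at radius $1$ and the kernel there is dominated by $|x-y|^{-2}$. Everything else is the standard annulus computation, in which the power $|x-y|^{-3}$ together with the three-dimensional volume element reproduces the same logarithmic divergence that drives the two-dimensional case.
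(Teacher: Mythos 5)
Your proof is correct, but it takes a genuinely different (and more streamlined) route than the paper's on the far-field part. The shared core is identical: separate estimation of the two kernels on the near region, and the mean value theorem bound $|K(x-y)-K(z-y)|\lesssim |x-z|\,|x-y|^{-3}$ on an intermediate annulus, which produces the logarithm in $F$. The divergence is in how the tail is handled. The paper does not first dispose of large $|x-z|$; it treats all separations uniformly (after translating $z$ to the origin), and since the segment joining $x-y$ and $-y$ can then pass near the origin, the mean value theorem is unavailable on the far region $|y-x|\ge 2$. Instead the paper uses the algebraic identity \eqref{eq a/|a|^3-b/|b|^3}, which yields the mixed kernels $\frac{|x|}{|x-y|^2|y|}+\frac{|x|}{|x-y||y|^2}$, and the first of these forces the periodic slab decomposition and summation over $n$ in \eqref{eq sec3 C5}--\eqref{eq sec3 C8}. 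Your preliminary reduction to $|x-z|\le \tfrac1e$ --- where the difference can be bounded by the sum of the two terms when $|x-z|>\tfrac1e$, since $F\gtrsim 1$ there --- is precisely what makes the mean value theorem legitimate on all of $\{|x-y|>2|x-z|\}$, after which the crude domination $|x-y|^{-3}\le |x-y|^{-2}$ on $\{|x-y|>1\}$ lets you invoke Lemma \ref{le bound for u in R^3} once and avoid the slab summation entirely. What you lose relative to the paper is only a side benefit: the identity \eqref{eq a/|a|^3-b/|b|^3} and the mixed far-field estimates established inside the paper's proof are reused later (for instance in the uniqueness argument, where \eqref{eq sec4 B5} is estimated via \eqref{eq a/|a|^3-b/|b|^3}), so the paper's heavier machinery is not wasted; but as a proof of this lemma alone, yours is complete and simpler.
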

\begin{proof}
After changing the variable $ \tilde{y}=y-z$, we obtain
\begin{equation*}
\int_{\R^3} \left| \frac{x-y}{|x-y|^3}-\frac{z-y}{|z-y|^3}  \right| |\Omega(y)|\,dy = \int_{\R^3} \left| \frac{x-z-\tilde{y}}{|x-z-\tilde{y}|^3}-\frac{-\tilde{y}}{|-\tilde{y}|^3}  \right| |\Omega(\tilde{y}+z)|\,d\tilde{y}.
\end{equation*}
Observe that for any $z\in \R^3$,
\begin{equation*}
 \|\Omega(\cdot+z)\|_{L^1\bigcap L^{\infty}(\R^2 \times \mathbb{T})}= \|\Omega\|_{L^1\bigcap L^{\infty}(\R^2 \times \mathbb{T})}.
\end{equation*}
So we may assume without loss of generality that $z=0$ and $\Omega \ge 0$. To complete the proof, it suffices to show that
\begin{equation*}
\int \left| K(x-y)-K(-y) \right|\Omega(y)\,dy \lesssim\|\Omega\|_{L^1\bigcap L^{\infty}(\R^2 \times \mathbb{T})}F(|x|),
\end{equation*}
where $ K(x):=\frac{x}{|x|^3} \in C^{\infty}(\R^3 \setminus 0)$. To this end, we divide the above integral into three parts:
\begin{align}
\int \left| K(x-y)-K(-y) \right|\Omega(y)\,dy =& \int_{|y-x|\ge2}\left| K(x-y)-K(-y) \right|\Omega(y)\,dy \label{eq sec3 C1}\\&+\int_{2 \ge |y-x|\ge 2|x|}\left| K(x-y)-K(-y) \right|\Omega(y)\,dy\label{eq sec3 C2}\\&+\int_{|y-x|\le 2|x|} \left| K(x-y)-K(-y) \right|\Omega(y)\,dy \label{eq sec3 C3}.
\end{align}
For \eqref{eq sec3 C1}, a direct calculation shows that
\begin{equation*}\begin{aligned}
\left| \frac{a}{|a|^3}-\frac{b}{|b|^3} \right|^2 &= \frac{1}{|a|^4}+\frac{1}{|b|^4}-\frac{2a\cdot b}{|a|^3|b|^3} \\
&=\frac{|a|^4+|b|^4-2|a|a\cdot |b|b}{|a|^4|b|^4} \\
&=\frac{\Big| |a|a-|b|b\Big|^2}{|a|^4|b|^4},
\end{aligned}\end{equation*}
which implies
\begin{equation}\begin{aligned}\label{eq a/|a|^3-b/|b|^3}
\left| \frac{a}{|a|^3}-\frac{b}{|b|^3} \right|&=\frac{\Big| |a|a-|b|b\Big|}{|a|^2|b|^2} \\
&\lesssim \frac{\Big| a|a|-a|b|+a|b|-b|b| \Big|}{|a|^2|b|^2} \\
&\lesssim \frac{\Big| |a|-|b| \Big|}{|a||b|^2}+\frac{|a-b|}{|a|^2|b|} \\
&\lesssim \left( \frac{1}{|a||b|^2}+\frac{1}{|a|^2|b|} \right)|a-b|,
\end{aligned}\end{equation}
and hence
\begin{equation*}
|K(x-y)-K(-y)|\lesssim |x|\left( \frac{1}{|x-y|^2|y|}+\frac{1}{|x-y||y|^2} \right).
\end{equation*}
Therefore,
\begin{equation}\label{eq sec3 C9}\begin{aligned}
&\int_{|y-x|\ge2}\left| K(x-y)-K(-y) \right|\Omega(y)\,dy\\ \lesssim& |x| \int_{|x-y|\ge2}
\left( \frac{1}{|x-y|^2|y|}+\frac{1}{|x-y||y|^2} \right)\Omega(y)\,dy.
\end{aligned}\end{equation}
To bound the right-hand side of \eqref{eq sec3 C1}, it remains to show that
\begin{equation}\label{eq sec3 C4}
\int_{|y-x|\ge2} \frac{1}{|x-y||y|^2}\Omega(y)\,dy+\int_{|y-x|\ge2}  \frac{1}{|x-y|^2|y|}\Omega(y)\,dy \lesssim \|\Omega\|_{L^1\bigcap L^{\infty}(\R^2\times \T)}.
\end{equation}
For the first integral above, \eqref{eq le bound for u} gives
\begin{equation}\label{eq sec3 C7}\begin{aligned}
\int_{|y-x|\ge2} \frac{1}{|x-y||y|^2}\Omega(y)\,dy\lesssim \int_{\R^3} \frac{1}{|y|^2} \Omega(y)\,dy \lesssim \|\Omega\|_{L^1\bigcap L^{\infty}(\R^2 \times \mathbb{T})}.
\end{aligned}\end{equation}

For the second integral in \eqref{eq sec3 C4}, let $N=N(x)$ be the integer such that $x \in [2\pi N, 2\pi(N+1))$. Then we get
\begin{align}
\int_{|y-x|\ge2}  \frac{1}{|x-y|^2|y|}\Omega(y)\,dy =&\sum_{-\infty}^{\infty} \int_{\R^2} \int_{2\pi n}^{2\pi(n+1)} \frac{\Omega(y)\chi_{\{ |y-x|\ge 2\}}(y)}{|x-y|^2|y|} \,dy_3 \,dy' \nonumber \\
=& \sum_{|n-N|\ge2}  \int_{\R^2} \int_{2\pi n}^{2\pi(n+1)} \frac{\Omega(y)\chi_{\{ |y-x|\ge 2\}}(y)}{|x-y|^2|y|} \,dy_3 \,dy'\label{eq sec3 C5} \\
&+\sum_{|n-N|\le1} \int_{\R^2} \int_{2\pi n}^{2\pi(n+1)} \frac{\Omega(y)\chi_{\{ |y-x|\ge 2\}}(y)}{|x-y|^2|y|} \,dy_3 \,dy'\label{eq sec3 C6}.
\end{align}
For \eqref{eq sec3 C5}, we only consider the case when $n \ge N+2$ since the other part is similar. Note that $|x-y|\ge |x_3-y_3|\approx |N+1-n|$ for $y_3 \in [2\pi n, 2\pi(n+1))$, so it follows that
\begin{equation*}\begin{aligned}
&\sum_{n=N+2}^{\infty}  \int_{\R^2} \int_{2\pi n}^{2\pi(n+1)} \frac{\Omega(y)\chi_{\{ |y-x|\ge 2\}}(y)}{|x-y|^2|y|} \,dy_3dy_1dy_2 \\ \lesssim& \sum_{n=N+2}^{\infty} \int_{\R^2} \int_{2\pi n}^{2\pi(n+1)} \frac{\Omega(y)}{|N+1-n|^2|y|} \,dy_3\,dy_1dy_2 \\
=&\sum_{n=N+2}^{\infty} \frac{1}{|N+1-n|^2}\int_{\R^2} \int_{2\pi n}^{2\pi(n+1)} \frac{\Omega(y)}{|y|} \,dy_3\,dy_1dy_2.
\end{aligned}\end{equation*}
Denote
\begin{equation*}
\tilde{\Omega}_n(y):=\Omega(y) \chi_{2\pi n \le y_3 \le 2\pi(n+1)},
\end{equation*}

then \eqref{eq standard R3} yields
\begin{equation}\label{eq sec3 C8}\begin{aligned}
&\sum_{n=N+2}^{\infty}  \int_{\R^2} \int_{2\pi n}^{2\pi(n+1)} \frac{\Omega(y)\chi_{\{ |y-x|\ge 2\}}(y)}{|x-y|^2|y|} \,dy_3dy_1dy_2\\ \lesssim& \sum_{n=N+2}^{\infty} \frac{1}{|N+1-n|^2}\|\tilde{\Omega}_n\|_{L^1\bigcap L^{\infty}(\R^3)} \\
\lesssim& \sum_{n=N+2}^{\infty} \frac{1}{|N+1-n|^2}\|\Omega\|_{L^1\bigcap L^{\infty}(\R^2 \times \mathbb{T})} \\
\lesssim& \|\Omega\|_{L^1\bigcap L^{\infty}(\R^2 \times \mathbb{T})}.
\end{aligned}\end{equation}
For \eqref{eq sec3 C6}, denote
\begin{equation*}
\tilde{\Omega}(y)=\Omega(y)\chi_{\left\{2\pi(N-1)\le y_3 \le 2\pi(N+2)\right\}}
\end{equation*}

then in view of \eqref{eq standard R3}, we have
\begin{equation*}\begin{aligned}
&\sum_{|n-N|\le1} \int_{\R^2} \int_{2\pi n}^{2\pi(n+1)} \frac{\Omega(y)\chi_{\{ |y-x|\ge 2\}}(y)}{|x-y|^2|y|} \,dy_3 \,dy_1dy_2\\ \lesssim& \int_{\R^3} \frac{\tilde{\Omega}(y)}{|x-y|^2|y|}\chi_{\{ |y-x|\ge 2\}}\,dy
=\int_{|y-x|\ge 2} \frac{\tilde{\Omega}(y)}{|x-y|^2|y|} \,dy \\
\lesssim& \int_{\R^3} \frac{\tilde{\Omega}(y)}{|y|}\,dy
\lesssim  \|\tilde{\Omega}\|_{L^1\bigcap L^{\infty}(\R^3)}
\lesssim \|\Omega\|_{L^1\bigcap L^{\infty}(\R^2 \times \mathbb{T})}.
\end{aligned}\end{equation*}
Together with \eqref{eq sec3 C7} and \eqref{eq sec3 C8}, we finally obtain \eqref{eq sec3 C4} and hence by \eqref{eq sec3 C9} we get the desired estimates for \eqref{eq sec3 C1}.
For \eqref{eq sec3 C2}, we only consider the case $|x| \le 1$ since otherwise the integral vanishes identically. Note that when $|y| \ge 2|x|$, the segment between $x-y$ and $-y$ does not contain the origin. So the mean value theorem yields
\begin{equation*}\begin{aligned}
\left| K(x-y)-K(-y) \right| &= \left| x \nabla K(-y+\theta x) \right| \\
&\lesssim |x|\frac{1}{|-y+\theta x|^3}
\lesssim \frac{|x|}{|y|^3}
\lesssim \frac{|x|}{|y-x|^3}.
\end{aligned}
\end{equation*}
Therefore,
\begin{equation*}\begin{aligned}
&\int_{2 \ge |y-x|\ge 2|x|}\left| K(x-y)-K(-y) \right|\Omega(y)\,dy \\ \lesssim& |x| \int_{2\ge |y|\ge 2|x|} \frac{\Omega(y)}{|y-x|^3}\,dy
\lesssim \|\Omega\|_{L^{\infty}(\R^2\times \T)} F(|x|).
\end{aligned}\end{equation*}
For \eqref{eq sec3 C3}, we estimate
\begin{equation*}\begin{aligned}
\int_{|y|\le 2|x|} \left| \frac{(x-y)}{|x-y|^3}-\frac{(-y)}{|-y|^3} \right|\Omega(y)\,dy \lesssim \int_{|y|\le 2|x|} \frac{\Omega(y)}{|x-y|^2} \,dy +\int_{|y|\le 2|x|} \frac{\Omega(y)}{|y|^2} \,dy.
\end{aligned}\end{equation*}
A direct calculation shows that
\begin{equation*}\begin{aligned}
\int_{|y|\le 2|x|} \frac{\Omega(y)}{|x-y|^2} \,dy &\lesssim \|\Omega\|_{L^{\infty}} \int_{|y|\le 2|x|}\frac{1}{|x-y|^2} \,dy \\
&\lesssim \|\Omega\|_{L^{\infty}} \int_{|y|\le 2|x|}\frac{1}{|y|^2} \,dy \\
&\lesssim\|\Omega\|_{L^{\infty}} |x|
\end{aligned}\end{equation*}
and similarly,
\begin{equation*}\begin{aligned}
\int_{|y|\le 2|x|} \frac{\Omega(y)}{|y|^2} \,dy&\lesssim \|\Omega\|_{L^{\infty}} \int_{|y|\le 2|x|}\frac{1}{|y|^2} \,dy \\
& \lesssim \|\Omega\|_{L^{\infty}} \int_{|y|\le 2|x|}\frac{1}{|y|^2} \,dy \\
& \lesssim \|\Omega\|_{L^{\infty}} |x|.
\end{aligned}\end{equation*}
Gathering the estimates above we obtain
\begin{equation*}
\int_{|y-x|\le 2|x|} \left| K(x-y)-K(-y) \right|\Omega(y)\,dy \label{eq sec3 C3} \lesssim |x|+F(|x|)+|x| \lesssim F(|x|),
\end{equation*}
which completes the proof.

\end{proof}
\begin{corollary}\label{co log-lip in R^3}For any $x, z\in \R^3$ and $G(x,y)$ defined in \eqref{eq def of G(x,y)}, the following holds.
\begin{equation}\label{eq sec3 A4}
\int_{\R^3} \left|  G(x,y)-G(z,y) \right| |\Omega(y)|\,dy \lesssim  \|\Omega\|_{L^1_1 \bigcap L^{\infty}_1(\R^2\times \T)}F(|x-z|),
\end{equation}
\begin{equation}\label{eq log-lip in R^3}
\int_{\R^3} \left|  G(x,y)-G(z,y) \right| |\Omega(y)|\,dy \lesssim  (\la x'\ra +\la z' \ra)\|\Omega\|_{L^1 \bigcap L^{\infty}(\R^2\times \T)}F(|x-z|)
\end{equation}
and
\begin{equation}\label{eq sec3 A5}
\int_{\R^3} \left|  G(y,x)-G(y,z) \right| |\Omega(y)|\,dy \lesssim (\la x'\ra +\la z' \ra) \|\Omega\|_{L^1 \bigcap L^{\infty}(\R^2\times \T)}F(|x-z|).
 \end{equation}
\end{corollary}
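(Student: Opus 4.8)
The plan is to reduce all three estimates to the two workhorses already available, the Log-Lipschitz bound of Lemma \ref{le K 3d} and the potential bound of Lemma \ref{le bound for u in R^3}, exploiting only that $G(x,y)=\frac{x-y}{|x-y|^3}\wedge\xi(y)$ with $|\xi(y)|=\la y'\ra$, and that the weight $\la y'\ra$ is itself helical because $|(S_\theta y)'|=|y'|$. For \eqref{eq sec3 A4} I would write $G(x,y)-G(z,y)=\left(\frac{x-y}{|x-y|^3}-\frac{z-y}{|z-y|^3}\right)\wedge\xi(y)$, so pointwise $|G(x,y)-G(z,y)|\le\left|\frac{x-y}{|x-y|^3}-\frac{z-y}{|z-y|^3}\right|\la y'\ra$. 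Since $\la\cdot'\ra\,\Omega$ is again a helical element of $L^1\bigcap L^{\infty}(\R^2\times\T)$ with norm exactly $\|\Omega\|_{L^1_1\bigcap L^{\infty}_1}$, applying Lemma \ref{le K 3d} to it yields \eqref{eq sec3 A4} at once.

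For \eqref{eq sec3 A5} the two arguments of $\xi$ differ, so I would split $G(y,x)-G(y,z)=\left(\frac{y-x}{|y-x|^3}-\frac{y-z}{|y-z|^3}\right)\wedge\xi(x)+\frac{y-z}{|y-z|^3}\wedge(\xi(x)-\xi(z))$. Using $|\xi(x)|=\la x'\ra$ and $|\xi(x)-\xi(z)|=|x'-z'|\le|x-z|$ this is bounded by $\la x'\ra\left|\frac{y-x}{|y-x|^3}-\frac{y-z}{|y-z|^3}\right|+\frac{|x-z|}{|y-z|^2}$. Integrating the first term against $|\Omega|$ and invoking Lemma \ref{le K 3d} produces $\la x'\ra\|\Omega\|_{L^1\bigcap L^{\infty}}F(|x-z|)$, while the second, by Lemma \ref{le bound for u in R^3}, contributes $|x-z|\,\|\Omega\|_{L^1\bigcap L^{\infty}}\lesssim\|\Omega\|_{L^1\bigcap L^{\infty}}F(|x-z|)$ since $|x-z|\le F(|x-z|)$; together with $1\le\la x'\ra$ this gives \eqref{eq sec3 A5}.

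The genuine work is \eqref{eq log-lip in R^3}, where the growth weight must be pulled outside the integral. When $r:=|x-z|\ge\tfrac1e$ I would simply use the triangle inequality together with the bound $\int_{\R^3}\frac{\la y'\ra}{|x-y|^2}|\Omega(y)|\,dy\lesssim\la x'\ra\|\Omega\|_{L^1\bigcap L^{\infty}}$ established just before Lemma \ref{le K 3d}, since $1\lesssim F(r)$ in that range. For $r<\tfrac1e$ I would split the domain into the near field $|x-y|\le2r$ and the far field $|x-y|\ge2r$. In the near field $\la y'\ra\lesssim\la x'\ra$, while $\int_{|x-y|\le2r}|x-y|^{-2}|\Omega|\lesssim r\|\Omega\|_{L^{\infty}}$ and the analogous integral with $|z-y|$ is controlled the same way because $|z-y|\le3r$ there, giving a near-field contribution $\lesssim\la x'\ra r\|\Omega\|_{L^{\infty}}\lesssim\la x'\ra\|\Omega\|_{L^1\bigcap L^\infty}F(r)$.

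In the far field $|z-y|\approx|x-y|$, so the elementary kernel estimate \eqref{eq a/|a|^3-b/|b|^3} bounds the integrand by $r\la y'\ra|x-y|^{-3}|\Omega|$; inserting $\la y'\ra\le\la x'\ra+|x-y|$ produces a harmless piece $\frac{r}{|x-y|^2}|\Omega|$, handled by Lemma \ref{le bound for u in R^3} to give $r\|\Omega\|_{L^1\bigcap L^{\infty}}\lesssim\|\Omega\|_{L^1\bigcap L^\infty}F(r)$, plus the delicate piece $\frac{r\la x'\ra}{|x-y|^3}|\Omega|$. The hard part is exactly this last piece, where the Log-Lipschitz modulus has to be generated: integrating it over the annulus $2r\le|x-y|\le1$ gives $r\la x'\ra\|\Omega\|_{L^{\infty}}\log\frac{1}{2r}$, and the crucial observation is $r\log\frac{1}{2r}\lesssim F(r)$ for small $r$, so this is controlled by $\la x'\ra\|\Omega\|_{L^1\bigcap L^\infty}F(r)$; over $|x-y|\ge1$ the extra power of $|x-y|$ is absorbed against $r$ and Lemma \ref{le bound for u in R^3} applies once more. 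Collecting the near- and far-field bounds and recalling $1\le\la z'\ra$ yields \eqref{eq log-lip in R^3}.
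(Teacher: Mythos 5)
Your proposal is correct. For \eqref{eq sec3 A4} and \eqref{eq sec3 A5} it coincides with the paper's own proof: \eqref{eq sec3 A4} is Lemma \ref{le K 3d} applied to the density $\la y'\ra|\Omega(y)|$, which is still helical (since $|(S_\theta y)'|=|y'|$) and whose $L^1\bigcap L^{\infty}(\R^2\times\T)$ norm equals $\|\Omega\|_{L^1_1\bigcap L^{\infty}_1}$; and \eqref{eq sec3 A5} uses exactly the paper's decomposition of $G(y,x)-G(y,z)$ into a kernel-difference term handled by Lemma \ref{le K 3d} and a $\xi$-difference term, $|\xi(x)-\xi(z)|\le|x-z|\le F(|x-z|)$, handled by Lemma \ref{le bound for u in R^3}.

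Where you genuinely diverge is \eqref{eq log-lip in R^3}, and the comparison is instructive. The paper splits the $y$-integral by the size of $|y|$: on $|y|\le\max\{5|x|,5|z|\}$ it absorbs the weight and quotes \eqref{eq bound for R3 kernel}, while on $|y|\ge\max\{1,5|x|,5|z|\}$ it uses the mean value theorem, $\bigl|\frac{x-y}{|x-y|^3}-\frac{z-y}{|z-y|^3}\bigr|\lesssim\frac{|x-z|}{|y|^3}$, together with \eqref{eq le bound for u}. Read literally, the absorption step only gives $\la y'\ra\lesssim\la x\ra+\la z\ra$, i.e. the \emph{unprimed} weight; recovering the stated $\la x'\ra+\la z'\ra$ requires in addition the helical invariance of both sides under $(x,z)\mapsto(S_\theta x,S_\theta z)$ to normalize $x_3=0$, plus a separate treatment of large $|x-z|$ --- steps the paper leaves implicit. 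Your argument instead splits according to $|x-y|$ versus $r=|x-z|$: the case $r\ge\frac1e$ is dispatched by the triangle inequality and the helical bound $\int\la y'\ra|x-y|^{-2}|\Omega|\,dy\lesssim\la x'\ra\|\Omega\|_{L^1\bigcap L^{\infty}}$ established just before Lemma \ref{le K 3d}; for $r<\frac1e$ the near field gives $\la x'\ra r\|\Omega\|_{L^{\infty}}$, and in the far field ($|z-y|\approx|x-y|$) the elementary estimate \eqref{eq a/|a|^3-b/|b|^3} plus $\la y'\ra\le\la x'\ra+|x-y|$ reduces everything to Lemma \ref{le bound for u in R^3} and the annulus integral over $2r\le|x-y|\le1$, where the Log appears via $r\log\frac{1}{2r}\le F(r)$. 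This is longer than the paper's proof, but it is self-contained, makes the source of the Log-Lipschitz modulus explicit, and delivers the primed weights directly, with the helical structure of $\Omega$ invoked only in the large-separation regime; every individual step checks out.
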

\begin{proof}
The inequality \eqref{eq sec3 A4} follows directly from Lemma \ref{le K 3d}. In order to prove \eqref{eq log-lip in R^3}, observe that
\begin{equation*}\begin{aligned}
\int_{\R^3} \left| G(x,y)-G(z,y) \right| \Omega(y) \,dy &\lesssim \int_{\R^3} \left| K(x-y)-K(z-y) \right| \la y'\ra\Omega(y) \,dy,
\end{aligned}\end{equation*}
so by \eqref{eq bound for R3 kernel} it suffice to consider the integral for $ \max \left\{5|x|,5|z|\right\} \le |y|$. When $|y| \le 1$,
\begin{equation*}\begin{aligned}
\int_{|y| \le 1} \left| G(x,y)-G(z,y) \right| \Omega(y) \,dy &\lesssim \int_{|y| \le 1} \left| K(x-y)-K(z-y) \right| \la y'\ra\Omega(y) \,dy \\
&\lesssim \int_{\R^3} \left| K(x-y)-K(z-y) \right| \Omega(y) \,dy .
\end{aligned}\end{equation*}
Thus \eqref{eq bound for R3 kernel} implies that
\begin{equation*}
\int_{|y| \le 1} \left| G(x,y)-G(z,y) \right| \Omega(y) \,dy \lesssim F(|x-z|) \|\Omega\|_{L^1 \bigcap L^{\infty}(\R^2\times \T)}.
\end{equation*}
When $|y| \ge 1$, we use mean value theorem to conclude that
\begin{equation*}
|K(x-y)-K(z-y)| \lesssim \frac{|x-z|}{|y|^3},
\end{equation*}
which implies (using \eqref{eq le bound for u})
\begin{equation*}\begin{aligned}
&\int_{\max{\{1,5|x|,5|z|\}}\le |y|} \left|  G(x,y)-G(z,y) \right| |\Omega(y)|\,dy \\ \lesssim& |x-z|\int_{\max\{1,5|x|,5|z|\}\le |y|} \frac{\la y'\ra}{|y|^3} |\Omega(y)|\,dy \\
\lesssim& |x-z|\int_{\R^3} \frac{|\Omega(y)|}{|y|^2} \,dy \\
\lesssim& |x-z| \|\Omega\|_{L^1 \bigcap L^{\infty}(\R^2\times \T)}.
\end{aligned}\end{equation*}
Next, we consider \eqref{eq sec3 A5}. Observe that
\begin{equation*}\begin{aligned}
G(y,x)-G(y,z)&=\frac{y-x}{|y-x|^3} \wedge \xi(x)-\frac{y-z}{|y-z|^3}\xi(z)  \\
&=\left(  \frac{y-x}{|y-x|^3}-\frac{y-z}{|y-z|^3} \right)\xi(x)+\frac{y-z}{|y-z|^3}(\xi(x)-\xi(z)),
\end{aligned}\end{equation*}
so we have
\begin{equation*}
\left|  G(y,x)-G(y,z) \right| \lesssim |\xi(x)|\left| \frac{y-x}{|y-x|^3}-\frac{y-z}{|y-z|^3}  \right|+|x-z|\frac{1}{|y-z|^2}.
\end{equation*}
Therefore, \eqref{eq le bound for u} and \eqref{eq bound for R3 kernel} yields
\begin{equation*}\begin{aligned}
&\int \left|  G(y,x)-G(y,z) \right| |\Omega(y)|\,dy\\ \lesssim& \la x'\ra\int \left| \frac{x-y}{|x-y|^3}-\frac{z-y}{|z-y|^3}  \right| |\Omega(y)|\,dy+|x-z|\int \frac{|\Omega(y)|}{|y-z|^2}\,dy \\
\lesssim& \la x'\ra F(x-z) \|\Omega\|_{L^1 \bigcap L^{\infty}(\R^2\times \T)}.
\end{aligned}\end{equation*}
\end{proof}
\section{Uniqueness.}
In this section, we prove the uniqueness of the weak solutions to \eqref{eq 2euler} and \eqref{eq 3euler z} in $L^1_1 \bigcap L^{\infty}_1(\R^2)$ and $L^1_1 \bigcap L^{\infty}_1(\R^3)$, respectively.
\subsection{Uniqueness of the three-dimensional helical Euler equation \eqref{eq 3euler z}.}

Our main theorem is:
\begin{theorem}\label{thm uniqueness in R3}
Assume $\Omega^z,\tilde{\Omega}^z \in L^{\infty}([0,T],L^1_1\bigcap L^{\infty}_1(\R^2\times \T))$ are two lagrangian solutions to \eqref{eq 3euler z} with the same initial data $\Omega^z_0$, then $\Omega^z= \tilde{\Omega}^z$.
\end{theorem}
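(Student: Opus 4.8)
The plan is to run a Lagrangian stability estimate in the spirit of Crippa--Stefani \cite{St}, measuring the separation of the two flows in the \emph{weighted} space that is forced by the growth of the helical kernel. Write $U(x,t)=\int_{\R^2\times\T}G(x,y)\Omega^z(y,t)\,dy$ and $\tilde U$ for the two velocity fields generated through the kernel $G$ of \eqref{eq def of G(x,y)}, and let $X(\alpha,t),\tilde X(\alpha,t)$ be the associated particle trajectory maps; these are well defined because $U,\tilde U$ are locally Osgood continuous (Subsection 3.2) and they preserve Lebesgue measure on $\R^2\times\T$ since $\nabla\cdot U=\nabla\cdot\tilde U=0$. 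Both solutions being Lagrangian, the transport identities $\Omega^z(X(\alpha,t),t)=\Omega_0(\alpha)$ and $\tilde\Omega^z(\tilde X(\alpha,t),t)=\Omega_0(\alpha)$ hold. Set $M:=\sup_{t\in[0,T]}\big(\|\Omega^z(t)\|_{L^1_1\cap L^\infty_1}+\|\tilde\Omega^z(t)\|_{L^1_1\cap L^\infty_1}\big)<\infty$. By \eqref{eq bound for U in R^3} the velocities are uniformly bounded, $\sup|U|,\sup|\tilde U|\lesssim M$, so the horizontal displacement along either flow (forward or backward) is $O_{M,T}(1)$ and consequently $\la X(\alpha,t)'\ra\approx_{M,T}\la\alpha'\ra$, and likewise for $\tilde X$. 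This comparison of the weight along the flow is the structural fact that makes the weighted norms natural in three dimensions.

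I would then introduce the weighted distance
$$D(t):=\int_{\R^2\times\T}\la\alpha'\ra\,|X(\alpha,t)-\tilde X(\alpha,t)|\,|\Omega_0(\alpha)|\,d\alpha,$$
which is finite since $\Omega_0\in L^1_1$ and $|X-\tilde X|\lesssim_{M,T}1$, with $D(0)=0$. Writing $X-\tilde X=\int_0^t\!\big(U(X,s)-\tilde U(\tilde X,s)\big)\,ds$ and applying Fubini gives $D(t)\le\int_0^t\!\int\la\alpha'\ra\,|U(X(\alpha,s),s)-\tilde U(\tilde X(\alpha,s),s)|\,|\Omega_0|\,d\alpha\,ds$. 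Inserting $\pm U(\tilde X(\alpha,s),s)$ splits the inner integrand into a transport part $I_1=\int\la\alpha'\ra|U(X)-U(\tilde X)||\Omega_0|\,d\alpha$ and a field part $I_2=\int\la\alpha'\ra|U(\tilde X)-\tilde U(\tilde X)||\Omega_0|\,d\alpha$. For $I_1$ the single field $U$ is globally log-Lipschitz by \eqref{eq sec3 A4}, so $I_1\lesssim_M\int\la\alpha'\ra F(|X-\tilde X|)|\Omega_0|\,d\alpha\lesssim_M\Phi(D(s))$, where $\Phi(r):=\|\Omega_0\|_{L^1_1}F\big(r/\|\Omega_0\|_{L^1_1}\big)$ and the last bound is Jensen's inequality applied to the concave $F$ against the finite measure $\la\alpha'\ra|\Omega_0|\,d\alpha$.

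The field part $I_2$ is the crux. I would first rewrite the velocity difference at a fixed point using the transport formula together with the measure-preserving changes of variables $y=X(\beta,s)$ and $y=\tilde X(\beta,s)$, obtaining $U(x,s)-\tilde U(x,s)=\int\big[G(x,X(\beta,s))-G(x,\tilde X(\beta,s))\big]\Omega_0(\beta)\,d\beta$. Substituting $x=\tilde X(\alpha,s)$, using Fubini, and changing variables $y=\tilde X(\alpha,s)$ in the $\alpha$-integral (so that $|\Omega_0(\alpha)|\,d\alpha=|\tilde\Omega^z(y,s)|\,dy$ and $\la\alpha'\ra\lesssim_{M,T}\la y'\ra$) reduces the inner integral to $\int\la y'\ra\,|G(y,X(\beta,s))-G(y,\tilde X(\beta,s))|\,|\tilde\Omega^z(y,s)|\,dy$. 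The key manoeuvre is to absorb the weight into the vorticity: $\Psi(\cdot,s):=\la\cdot'\ra\,\tilde\Omega^z(\cdot,s)$ is again helical and lies in $L^1\cap L^\infty$ with norm $\|\tilde\Omega^z(s)\|_{L^1_1\cap L^\infty_1}\lesssim_M1$, so estimate \eqref{eq sec3 A5} applies to $\Psi$ and bounds the inner integral by $\big(\la X(\beta,s)'\ra+\la\tilde X(\beta,s)'\ra\big)\|\Psi(s)\|_{L^1\cap L^\infty}F(|X(\beta,s)-\tilde X(\beta,s)|)\lesssim_{M,T}\la\beta'\ra\,F(|X(\beta,s)-\tilde X(\beta,s)|)$, the growth factor being tamed by the weight comparison of the first paragraph. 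Hence $I_2\lesssim_{M,T}\int\la\beta'\ra F(|X-\tilde X|)|\Omega_0|\,d\beta\lesssim_{M,T}\Phi(D(s))$ by Jensen once more. Combining the two bounds yields $D(t)\lesssim_{M,T}\int_0^t\Phi(D(s))\,ds$ with $D(0)=0$; since $\Phi$ is concave, vanishes at the origin and satisfies $\int_0 dr/\Phi(r)=\infty$, Osgood's lemma forces $D\equiv0$, whence $X=\tilde X$ almost everywhere with respect to $|\Omega_0|\,d\alpha$ and therefore $\Omega^z=\tilde\Omega^z$ by the transport formula.

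The main obstacle is precisely $I_2$: unlike the two-dimensional Biot--Savart kernel, the helical kernel $G(y,\cdot)$ grows linearly in the horizontal variable because $|\xi(x)|\approx\la x'\ra$, so the log-Lipschitz estimate \eqref{eq sec3 A5} unavoidably carries the factor $\la x'\ra+\la z'\ra$. Closing the Osgood inequality therefore requires both measuring the separation in the weighted space and controlling the growth of the weight $\la X'\ra$ along the flow; the latter is exactly where the boundedness of the velocity in the three-dimensional problem \eqref{eq 3euler z} (in contrast with the possible linear growth in the two-dimensional equation \eqref{eq 2euler}) is decisive.
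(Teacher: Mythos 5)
Your proposal follows the paper's proof in its skeleton—the weighted Lagrangian distance $D(t)$, the split of $U(X)-\tilde U(\tilde X)$ into a transport part and a field part, the log-Lipschitz bound \eqref{eq sec3 A4} plus Jensen for the transport part, and the trick of absorbing the weight into the vorticity so that $\Psi=\la\cdot'\ra\,\tilde\Omega^z$ is helical, lies in $L^1\cap L^\infty(\R^2\times\T)$, and \eqref{eq sec3 A5} applies—but the field part $I_2$ has a genuine gap. The Biot--Savart law integrates over \emph{all} of $\R^3$ against the periodic vorticity, so after your measure-preserving change of variables the representation $U(x,s)-\tilde U(x,s)=\int\big[G(x,X(\beta,s))-G(x,\tilde X(\beta,s))\big]\Omega_0(\beta)\,d\beta$ has $\beta$ ranging over all of $\R^3$, i.e.\ over infinitely many periods. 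Your bound on the inner $y$-integral via \eqref{eq sec3 A5} is correct for each fixed $\beta$, but it is \emph{uniform} in $\beta_3$: it produces the majorant $\la\beta'\ra\,F(|X(\beta,s)-\tilde X(\beta,s)|)\,|\Omega_0(\beta)|$, which by Lemma \ref{le helical particle trajectory map} (giving $X(\beta+2\pi e_3,s)=X(\beta,s)+2\pi e_3$, so $|X-\tilde X|$ is $2\pi$-periodic in $\beta_3$) and the helicity of $\Omega_0$ is a nonnegative $2\pi$-periodic function of $\beta_3$. Its integral over $\R^3$ is therefore $+\infty$ unless it vanishes identically—which is precisely the conclusion you are trying to prove. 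So the step ``$I_2\lesssim\int\la\beta'\ra F(|X-\tilde X|)|\Omega_0|\,d\beta\lesssim\Phi(D(s))$ by Jensen'' is vacuous: the middle quantity is infinite, and Jensen requires a finite measure, whereas $\la\beta'\ra|\Omega_0|\,d\beta$ has infinite mass on $\R^3$. (If instead you intended $\beta$ to range over one period $\R^2\times\T$, then the representation of $U-\tilde U$ itself is false, since it drops the contributions of all other periodic copies of the vorticity.)

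The missing ingredient is the near/far decomposition in $\beta_3$ that the paper carries out in \eqref{eq sec4 A5} (the terms $A_1$ and $A_2$). Fix $N$ so that the image of one period under the flow lies in the slab $\R^2\times[-2\pi N,2\pi N]$, and $M\gg N$ so that $|\beta_3|\ge 2\pi M$ forces $|X_3(\beta,s)|,|\tilde X_3(\beta,s)|\approx|\beta_3|\ge 4\pi N$. On the near region $\R^2\times[-2\pi M,2\pi M]$—finitely many periods—your argument via \eqref{eq sec3 A5} works verbatim, and periodicity reduces the integral to a single period at the cost of a constant depending on $M$; this is the paper's $A_1$. On the far region one must \emph{not} use the log-Lipschitz estimate; instead one exploits the decay of the kernel difference: for $y$ in the bounded slab and $|\beta_3|\ge 2\pi M$, the elementary inequality \eqref{eq a/|a|^3-b/|b|^3} gives
\begin{equation*}
\left|G(y,X(\beta,s))-G(y,\tilde X(\beta,s))\right|\lesssim \left|X(\beta,s)-\tilde X(\beta,s)\right|\,\frac{\la\beta'\ra}{|\beta_3|^2},
\end{equation*}
so the sum over distant periods converges ($\sum_n n^{-2}<\infty$), and periodicity of the integrand together with $F(r)\gtrsim r$ turns the result into $\lesssim F(D^*(s))$; this is the paper's $A_2$. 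Without this far-field estimate the Osgood inequality is never closed, so the proposal as written does not prove the theorem. A secondary, minor point: from $D\equiv 0$ you only get $X=\tilde X$ for $|\Omega_0|\,d\alpha$-a.e.\ $\alpha$; to conclude $\Omega^z=\tilde\Omega^z$ everywhere one still needs the injectivity argument of the paper's Lemma 4.2 (points whose preimages under both flows lie outside $\mathrm{supp}\,\Omega^z_0$ carry zero vorticity for both solutions), which your appeal to ``the transport formula'' elides.
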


Before the proof, we first make some useful observations. Recall that $\Omega^z$ satisfies
\begin{equation*}
\partial_t \Omega^z+U \cdot \nabla \Omega^z=0
\end{equation*}
and when $\Omega^z \in L^1_1 \bigcap L^{\infty}_1(\R^2\times \T)$, Corollary \ref{co log-lip in R^3} implies that the velocity field
\begin{equation*}
U(x,t)=\int G(x,y) \Omega^z(y)\,dy
\end{equation*}
with
\begin{equation*}
G(x,y)=\frac{x-y}{|x-y|^3}\wedge \xi(y)=\frac{x-y}{|x-y|^3}\wedge (y_2,-y_1,1)
\end{equation*}
is Log-Lipschitz continuous. Thus, the particle trajectory map
\begin{equation*}\begin{cases}\begin{aligned}
\frac{dX(\alpha,t)}{dt}&= U(X(\alpha,t),t)\\
X(\alpha,0)&=\alpha
\end{aligned}\end{cases}\end{equation*}
is well-defined. Let $X(\alpha,t),\tilde{X}(\alpha,t)$ be the particle trajectory map associated with $U$ and $\tilde{U}$, respectively. Then we have

\begin{lemma}
Assume $X(\alpha,t)=\tilde{X}(\alpha,t)$ for all $\alpha \in supp(\Omega^z_0)$ and $t \in [0,T]$, then $\Omega^z=\tilde{\Omega}^z$.
\end{lemma}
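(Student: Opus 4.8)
The plan is to combine the vortex transport formula satisfied by Lagrangian solutions with the fact that the particle trajectory maps preserve Lebesgue measure. Since $\Omega^z,\tilde{\Omega}^z\in L^{\infty}([0,T],L^1_1\bigcap L^{\infty}_1(\R^2\times\T))$, Corollary \ref{co log-lip in R^3} shows that $U$ and $\tilde U$ are Log-Lipschitz, so the flows $X(\cdot,t)$ and $\tilde X(\cdot,t)$ are well-defined homeomorphisms of $\R^2\times\T$; and because $\nabla\cdot U=\nabla\cdot\tilde U=0$ they preserve Lebesgue measure, i.e.\ the Jacobian of $X(\cdot,t)$ is identically one. By the definition of Lagrangian solution applied to the pure transport equation \eqref{eq 3euler z}, there hold, for a.e.\ $\alpha$,
\begin{equation*}
\Omega^z(X(\alpha,t),t)=\Omega^z_0(\alpha),\qquad \tilde{\Omega}^z(\tilde X(\alpha,t),t)=\Omega^z_0(\alpha).
\end{equation*}

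First I would fix $t\in[0,T]$ and an arbitrary test function $\phi\in C_c^{\infty}(\R^2\times\T)$, and compute $\int\Omega^z(x,t)\phi(x)\,dx$ via the change of variables $x=X(\alpha,t)$. Measure preservation removes the Jacobian, and the transport formula then gives
\begin{equation*}
\int\Omega^z(x,t)\phi(x)\,dx=\int\Omega^z_0(\alpha)\,\phi(X(\alpha,t))\,d\alpha,
\end{equation*}
with the analogous identity for $\tilde{\Omega}^z$ and $\tilde X$ on the right. Since $\Omega^z_0$ vanishes a.e.\ off $S:=\mathrm{supp}(\Omega^z_0)$, only the values of the flow on $S$ enter these integrals, and there the hypothesis $X(\alpha,t)=\tilde X(\alpha,t)$ forces $\Omega^z_0(\alpha)\phi(X(\alpha,t))=\Omega^z_0(\alpha)\phi(\tilde X(\alpha,t))$ a.e. Hence $\int\Omega^z(x,t)\phi\,dx=\int\tilde{\Omega}^z(x,t)\phi\,dx$ for every admissible $\phi$, and as $\phi$ is arbitrary this yields $\Omega^z(\cdot,t)=\tilde{\Omega}^z(\cdot,t)$ in $L^1\bigcap L^{\infty}$ for each $t\in[0,T]$, i.e.\ $\Omega^z=\tilde{\Omega}^z$.

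The main obstacle I expect is the rigorous justification that the Log-Lipschitz (but not Lipschitz) flow $X(\cdot,t)$ is genuinely measure-preserving, since $X(\cdot,t)$ need not be $C^1$ and Liouville's theorem does not apply directly. I would handle this by the standard approximation argument: mollify $U$ to obtain smooth divergence-free fields whose flows are exactly measure-preserving, then pass to the limit using the uniform Osgood modulus supplied by Corollary \ref{co log-lip in R^3}, which forces locally uniform convergence of the flows and transfers the measure-preserving property to the limit. The remaining work is purely measure-theoretic bookkeeping: checking that $\alpha\mapsto X(\alpha,t)$ is measurable, that the null set on which the transport formula fails is mapped to a null set (automatic since $X(\cdot,t)$ is a measure-preserving homeomorphism), and that replacing $X$ by $\tilde X$ on the essential support $S$ changes the integrand only on a null set. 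None of these steps is deep, so the essential content of the lemma is the clean interplay between transport and incompressibility captured in the two displayed identities above.
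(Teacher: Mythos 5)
Your proof is correct, but it takes a genuinely different route from the paper's. The paper argues pointwise: fixing $x$ and $t$, it writes $x=X(\alpha,t)=\tilde X(\beta,t)$ using bijectivity of the two flows, then does a case analysis --- if neither $\alpha$ nor $\beta$ lies in $\mathrm{supp}\,\Omega^z_0$, the transport formula makes both solutions vanish at $x$; if $\alpha\in\mathrm{supp}\,\Omega^z_0$, the hypothesis gives $\tilde X(\alpha,t)=X(\alpha,t)=\tilde X(\beta,t)$, injectivity of $\tilde X(\cdot,t)$ forces $\alpha=\beta$, and the transport formula closes the argument. You instead test against $\phi\in C_c^\infty$, pull both integrals back along the flows using measure preservation, and observe that the integrands $\Omega^z_0(\alpha)\phi(X(\alpha,t))$ and $\Omega^z_0(\alpha)\phi(\tilde X(\alpha,t))$ agree a.e.\ because $\Omega^z_0$ vanishes a.e.\ off its support. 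So the key flow property you invoke (measure preservation) differs from the one the paper uses (injectivity), but both are supplied by the same appendix result, Lemma \ref{le app A0}; hence the mollification argument you anticipate as ``the main obstacle'' is already carried out in the paper and you can simply cite it. What your version buys is measure-theoretic cleanliness: it needs the transport formula only for a.e.\ $\alpha$ and never evaluates the merely measurable functions $\Omega^z,\tilde\Omega^z$ at individual points, which the paper's pointwise proof implicitly does; what the paper's version buys is brevity and no need for the change-of-variables identity. One small point to tighten in yours: you phrase everything on $\R^2\times\T$, whereas the flows a priori live on $\R^3$; either run the identity on $\R^3$ (the integrands $\Omega^z(\cdot,t)\phi$ are in $L^1(\R^3)$ since $\phi$ has compact support), or note that Lemma \ref{le helical particle trajectory map} with $\theta=2\pi$ shows the flow commutes with translation by $2\pi$ in $x_3$, so it descends to a measure-preserving bijection of $\R^2\times\T$ as you claim.
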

\begin{proof}
We fix $x \in \R^3$ and $t\in [0,T]$, then there exists unique $\alpha$ and $\beta$ such that
$$x=X(\alpha,t)=\tilde{X}(\beta,t)$$
since the map $X(\cdot,t)$ and $\tilde{X}(\cdot,t)$ are one-to-one.

\emph{Case 1: $\alpha,\beta \notin supp \,\Omega^z_0$.} In this case, one has
\begin{equation*}
\Omega^z(x,t)=\Omega^z(X(\alpha,t),t)=\Omega^z_0(\alpha)=0
\end{equation*}
and
\begin{equation*}
\tilde{\Omega}^z(x,t)=\tilde{\Omega}^z(\tilde{X}(\beta,t),t)=\Omega^z_0(\beta)=0,
\end{equation*}
which implies $\Omega^z(x,t)=\tilde{\Omega}^z(x,t)=0$.\\
\emph{Case 2: $\alpha \in supp \,\Omega^z_0$.} Observe that by assumption in our lemma, we have
\begin{equation*}
\tilde{X}(\beta,t)=x=X(\alpha,t)=\tilde{X}(\alpha,t)
\end{equation*}
since $\alpha \in supp \,\Omega^z_0$. Thus, $\alpha=\beta$ since $\tilde{X}(\cdot,t)$ is one-to-one and hence
\begin{equation*}
\Omega^z(x,t)=\Omega^z_0(\alpha,t)=\Omega^z_0(\beta,t)=\tilde{\Omega}^z(x,t).
\end{equation*}
The case when $\beta \in supp \, \Omega^z_0$ is similar so we omit it.
\end{proof}

\begin{remark}From Lemma \ref{le helical particle trajectory map}, we see that $X-\tilde{X}$ is period in $\alpha_3$. So in order to prove $\Omega^z=\tilde{\Omega}^z$, it suffices to show $$X(\alpha,t)=\tilde{X}(\alpha,t) \text{ \quad for all $\alpha \in supp(\Omega^z_0) \bigcap \R^2 \times \T$.}$$
\end{remark}
Now we are ready to prove Theorem \ref{thm uniqueness in R3}.
\begin{proof}[Proof of Theorem \ref{thm uniqueness in R3}]
Motivated by the remark above, we define the distance
\begin{equation*}
D(t):=\int_{\R^2 \times \T} \left|X(\alpha,t)-\tilde{X}(\alpha,t)\right| \la\alpha\ra |\Omega^z_0(\alpha)| \,d\alpha
\end{equation*}
and it remains to show that $D(t)\equiv 0$.
First, we will show that $D(t)$ is continuous and bounded in $[0,T]$. To this end, we estimate
\begin{equation*}\begin{aligned}
|X(\alpha,t)-\alpha|&=\int_0^t \frac{X(\alpha,s)-\alpha}{|X(\alpha,s)-\alpha|} U(X(\alpha,s),s)\,ds \\
&\lesssim T \sup_{t} \|U(\cdot,t)\|_{L^{\infty}},
\end{aligned}\end{equation*}
which together with \eqref{eq bound for U in R^3} gives
\begin{equation*}\begin{aligned}
|X(\alpha,t)-\alpha|&\lesssim \sup_t \|\Omega^z\|_{L^1_1\bigcap L^{\infty}_1(\R^2 \times \mathbb{T})} \lesssim 1.
\end{aligned}\end{equation*}
Similarly,
\begin{equation*}
|\tilde{X}(\alpha,t)-\alpha| \lesssim 1,
\end{equation*}
which implies
\begin{equation}\label{eq sec4 A0}
\la\alpha\ra \approx \la X(\alpha,t)\ra \approx \la\tilde{X}(\alpha,t)\ra
\end{equation}
and
\begin{equation*}
|X(\alpha,t)-\tilde{X}(\alpha,t)|\le |X(\alpha,t)-\alpha|+|\tilde{X}(\alpha,t)-\alpha|\lesssim 1.
\end{equation*}
Therefore,
\begin{equation*}
D(t) \lesssim \int_{\R^2 \times \T} \la\alpha\ra |\Omega^z_0(\alpha)|\,d\alpha=\|\Omega^z\|_{L^1_1(\R^2 \times \mathbb{T})}.
\end{equation*}
Furthermore, the energy $D(t)$ is indeed continuous in $[0,T]$ by Lebesgue dominating convergence Theorem.\\
Next we show $D(t)\equiv 0$. Observe that if $\Omega^z_0 \equiv 0$, then $\Omega^z=\tilde{\Omega}^z \equiv 0$, so we may assume without loss of generality that $\Omega^z_0 \neq 0$. Set $$D^*(t):=\frac{D(t)}{\|\Omega^z_0\|_{L^1_1(\R^2\times \T)}}.$$ We claim that for all $t \in [0,T]$, one has
\begin{equation}\label{eq sec4 B7}
D^*(t)\lesssim \int_0^t F(D^*(s))\,ds.
\end{equation}
Recalling that $D^*(0)=0$, so it follows from Osgood's Lemma that $D^*(t)\equiv 0$ and hence $X(\alpha,t)=\tilde{X}(\alpha,t)$ for all $\alpha \in supp \, (\Omega^z_0) \bigcap \R^2 \times \T$. \\ We now  prove \eqref{eq sec4 B7}. A direct calculation shows that
\begin{align}
\left|  X(\alpha,t)-\tilde{X}(\alpha,t) \right| \lesssim& \int_0^t \left|  U(X(\alpha,s),s)-\tilde{U}(\tilde{X}(\alpha,s),s) \right|\,ds \nonumber \\
\lesssim& \int_0^t \left|  \tilde{U}(X(\alpha,s),s)-\tilde{U}(\tilde{X}(\alpha,s),s) \right|\,ds\\&+\int_0^t \left|  U(X(\alpha,s),s)-\tilde{U}(X(\alpha,s),s) \right|\,ds. \label{eq sec4 B4}
\end{align}
First we use \eqref{eq sec3 A4} to conclude that
\begin{equation*}
|\tilde{U}(x)-\tilde{U}(z)|\lesssim \|\tilde{\Omega}^z\|_{L^1_1\bigcap L^{\infty}_1(\R^2\times \T)} F(|x-z|),
\end{equation*}
which implies
\begin{equation*}
\int_0^t \left|  \tilde{U}(X,s)-\tilde{U}(\tilde{X},s) \right|\,ds \lesssim \int_0^t F\left(|X(\alpha,s)-\tilde{X}(\alpha,s)|\right)\,ds.
\end{equation*}
Since $F$ is a concave function, Jensen's inequality yields
\begin{equation*}\begin{aligned}
&\frac{1}{\|\Omega^z_0\|_{L^1_1}}\int_{\R^2\times \T} \int_0^t \left|  U(X,s)-\tilde{U}(X,s) \right|\,ds \la\alpha\ra |\Omega^z_0(\alpha)|\,d\alpha \\ \lesssim& \int_0^t \int_{\R^2\times \T} F\left(|X(\alpha,s)-\tilde{X}(\alpha,s)|\right) \, \frac{\la\alpha\ra |\Omega^z_0(\alpha)|}{\|\Omega^z_0\|_{L^1_1}}\,d\alpha\,ds \\
\lesssim& \int_0^t F\left(\int_{\R^2 \times \T} |X(\alpha,s)-\tilde{X}(\alpha,s)| \, \frac{\la\alpha\ra |\Omega^z_0(\alpha)|}{\|\Omega^z_0\|_{L^1_1}}\,d\alpha\right)\,ds \\
=&\int_0^t F(D^*(s))\,ds.
\end{aligned}\end{equation*}
Next we estimate the second term in \eqref{eq sec4 B4},
\begin{equation*}\begin{aligned}
|U(x,s)-\tilde{U}(x,s)|&=\left|  \int G(x,y)\Omega^z(y,s)\,dy-\int G(x,y)\tilde{\Omega}^z(y,s)\,dy     \right|\\
&=\left|  \int G(x,X(\beta,s))\Omega^z_0(\beta)\,d\beta -\int G(x,\tilde{X}(\beta,s))\Omega^z_0(\beta)\,d\beta \right| \\
&\lesssim \int \left| G(x,X(\beta,s))-G(x,\tilde{X}(\beta,s)) \right||\Omega^z_0(\beta)|\,d\beta.
\end{aligned}\end{equation*}
Then \eqref{eq sec4 A0} gives
\begin{equation*}\begin{aligned}
 &\int_{\R^2 \times \T} \left|  U(X(\alpha,s),s) -\tilde{U}(X(\alpha,s),s) \right|\la\alpha\ra \Omega^z_0(\alpha)\,d\alpha  \\
 \lesssim& \int_{\R^3} \int_{\R^2\times \T} \left| G(X(\alpha,s),X(\beta,s))-G(X(\alpha,s),\tilde{X}(\beta,s)) \right|\la\alpha\ra|\Omega^z_0(\beta)\Omega^z_0(\alpha)|\,d\alpha \,d\beta \\
 \lesssim& \int_{\R^3} \int_{\R^2\times \T} \left| G(X(\alpha,s),X(\beta,s))-G(X(\alpha,s),\tilde{X}(\beta,s)) \right|\la X(\alpha,s)\ra|\Omega^z_0(\beta)\Omega^z_0(\alpha)|\,d\alpha \,d\beta\\
 :=& A.
\end{aligned}\end{equation*}
Note that $|X(\alpha,t)-\alpha| \lesssim 1$, so there exists $N > 0 $ such that $X(\cdot,t)$ maps $\R^2 \times [0,2\pi]$ into $\R^2 \times [-2\pi N, 2\pi N]$. Thus a change of variables $x=X(\alpha,s)$ yields
\begin{equation*}
A \lesssim \int_{\R^3} \int_{\R^2 \times [-2\pi N,2\pi N]} \left| G(x,X(\beta,s))-G(x,\tilde{X}(\beta,s)) \right|\la x'\ra|\Omega^z(x,s)|\,dx |\Omega^z_0(\beta)|\,d\beta. \\
\end{equation*}
Fix $M \gg N$ such that for all $|\beta_3| \ge 2\pi M$ and all $t \in [0,T]$, $$|X_3(\beta,t)|, |\tilde{X}_3(\beta,t)| \approx |\beta_3|\ge 4\pi N. $$  Then we can divide the integral into two parts according to the value of $\beta_3$. Let $X_M=\R^2 \times [-2\pi M,2\pi M ]$, then
\begin{equation}\label{eq sec4 A5}\begin{aligned}
A \lesssim& \int_{X_M} \int_{X_N} \left| G(x,X(\beta,s))-G(x,\tilde{X}(\beta,s)) \right|\la x'\ra|\Omega^z(x,s)|\,dx |\Omega^z_0(\beta)|\,d\beta \\
&+\int_{X_M^C} \int_{X_N} \left| G(x,X(\beta,s))-G(x,\tilde{X}(\beta,s)) \right|\la x'\ra|\Omega^z(x,s)|\,dx |\Omega^z_0(\beta)|\,d\beta \\
:=&A_1+A_2.
\end{aligned}\end{equation}
For $A_1$, we see directly from \eqref{eq log-lip in R^3} and \eqref{eq sec4 A0} that
\begin{equation*}\begin{aligned}
A_1 &\lesssim  \int_{X_M} \left( \la X(\beta,s)\ra+\la\tilde{X}(\beta,s)\ra \right)F(|X(\beta,s)-\tilde{X}(\beta,s)|)|\Omega^z_0(\beta)|\,d\beta \\
&\lesssim  \int_{X_M} F(|X(\beta,s)-\tilde{X}(\beta,s)|)|\la\beta\ra|\Omega^z_0(\beta)|\,d\beta \\
&\lesssim\int_{\R^2 \times T} F(|X(\beta,s)-\tilde{X}(\beta,s)|)|\la\beta\ra|\Omega^z_0(\beta)|\,d\beta
\end{aligned}\end{equation*}
since $\Omega^z \in L^{\infty}([0,T],L^1_1\bigcap L^{\infty}_1(\R^2\times \T))$. Thus, Jensen's inequality yields
\begin{equation}\label{eq sec4 A6}\begin{aligned}
A_1&\lesssim  F\left(\int_{\R^2 \times \T}|X(\beta,s)-\tilde{X}(\beta,s)|\frac{|\la\beta\ra\Omega^z_0(\beta)|}{\|\Omega^z_0\|_{L^1_1(\R^2\times \T)}}\,d\beta\right).
\end{aligned}\end{equation}
For $A_2$, we estimate
\begin{align}
&\left| G(x,X(\beta,s))-G(x,\tilde{X}(\beta,s)) \right|\nonumber\\ =&\left| \frac{x-X(\beta,s)}{|x-X(\beta,s)|^3}\wedge \xi(X(\beta,s))-\frac{x-\tilde{X}(\beta,s)}{|x-\tilde{X}(\beta,s)|^3}\wedge \xi(\tilde{X}(\beta,s)) \right| \nonumber\\
\lesssim& \left| \left( \frac{x-X(\beta,s)}{|x-X(\beta,s)|^3}-\frac{x-\tilde{X}(\beta,s)}{|x-\tilde{X}(\beta,s)|^3}\right) \wedge \xi(X(\beta,s)) \right| \label{eq sec4 B5} \\
&+\left| \frac{x-\tilde{X}(\beta,s)}{|x-\tilde{X}(\beta,s)|^3}\wedge \left(\xi(X(\beta,s))-\xi(\tilde{X}(\beta,s)) \right) \right|\label{eq sec4 B6}.
\end{align}
For \eqref{eq sec4 B5}, we see from \eqref{eq a/|a|^3-b/|b|^3} that for all $|x| \le 2\pi N$ and $|\beta_3| \ge 2\pi M$,
\begin{equation*}\begin{aligned}
\left| \left( \frac{x-X(\beta,s)}{|x-X(\beta,s)|^3}-\frac{x-\tilde{X}(\beta,s)}{|x-\tilde{X}(\beta,s)|^3}\right) \wedge \xi(X(\beta,s)) \right|
\lesssim& \left|X(\beta,s)-\tilde{X}(\beta,s)\right| \frac{\la \beta' \ra}{|\beta_3|^3} \\
\lesssim& \left|X(\beta,s)-\tilde{X}(\beta,s)\right| \frac{\la \beta' \ra}{|\beta_3|^2}.
\end{aligned}\end{equation*}
For \eqref{eq sec4 B6}, since $\left|x_3-\tilde{X}_3(\beta,s)\right|\approx |\beta_3|$, it is easy to check that
\begin{equation*}
\left| \frac{x-\tilde{X}(\beta,s)}{|x-\tilde{X}(\beta,s)|^3}\wedge \left(\xi(X(\beta,s))-\xi(\tilde{X}(\beta,s)) \right) \right| \lesssim \left|X(\beta,s)-\tilde{X}(\beta,s)\right| \frac{1}{|\beta_3|^2}.
\end{equation*}
Thus,
\begin{equation*}
\left| G(x,X(\beta,s))-G(x,\tilde{X}(\beta,s)) \right| \lesssim \left|X(\beta,s)-\tilde{X}(\beta,s)\right|\frac{\la \beta' \ra}{|\beta_3|^2},
\end{equation*}
which implies that
\begin{equation*}\begin{aligned}
A_2 &\lesssim \|\Omega^z(\cdot,s)\|_{L^1_1(\R^2\times \T)} \int_{X_M^C} \left|X(\beta,s)-\tilde{X}(\beta,s)\right|\frac{\la \beta' \ra}{|\beta_3|^2} |\Omega^z_0(\beta)|\,d\beta \\
&\lesssim \sum_{|n|\ge M} \int_{\R^2} \int_{2\pi n}^{2\pi(n+1)}\left|X(\beta,s)-\tilde{X}(\beta,s)\right|\frac{\la \beta' \ra}{|\beta_3|^2} |\Omega^z_0(\beta)|\,d\beta_3\,d\beta_1d\beta_2 \\
&\lesssim \sum_{|n|\ge M} \int_{\R^2} \int_{2\pi n}^{2\pi(n+1)}\left|X(\beta,s)-\tilde{X}(\beta,s)\right|\frac{\la \beta' \ra}{n^2} |\Omega^z_0(\beta)|\,d\beta_3\,d\beta_1d\beta_2.
\end{aligned}\end{equation*}
Note that $|X(\beta,t)-\tilde{X}(\beta,t)|, \la \beta' \ra$ and $\Omega^z_0(\beta)$ are periodic functions in $\beta_3$, so it follows that
\begin{equation}\label{eq sec4 A7}\begin{aligned}
A_2 &\lesssim \sum_{|n|\ge M} \frac{1}{n^2}\int_{\R^2} \int_{0}^{2\pi}\left|X(\beta,s)-\tilde{X}(\beta,s)\right|\la \beta' \ra |\Omega^z_0(\beta)|\,d\beta_3\,d\beta_1d\beta_2 \\
&\lesssim F \left( \int_{\R^2 \times \T}\left|X(\beta,s)-\tilde{X}(\beta,s)\right| \frac{\la\beta\ra |\Omega^z_0(\beta)|}{\|\Omega^z_0\|_{L^1_1(\R^2\times \T)}} \,d\beta \right),
\end{aligned}\end{equation}
where we have used the fact that $F(r) \gtrsim r$ for $r \ge 0$. Thus, \eqref{eq sec4 A5}, \eqref{eq sec4 A6} and \eqref{eq sec4 A7} gives
\begin{equation*}
A \lesssim F \left( \int_{\R^2 \times \T}\left|X(\beta,s)-\tilde{X}(\beta,s)\right| \frac{\la\beta\ra |\Omega^z_0(\beta)|}{\|\Omega^z_0\|_{L^1_1(\R^2\times \T)}} \,d\beta \right).
\end{equation*}

Integrating the above inequality from $0$ to $t$, we arrive at
\begin{equation*}\begin{aligned}
D^*(t) &\lesssim \int_0^t F\left(\int_{\R^2 \times \T} \left|X(\alpha,s)-\tilde{X}(\alpha,s)\right| \, \frac{\la\alpha\ra |\Omega^z_0(\alpha)|}{\|\Omega^z_0\|_{L^1_1}}\,d\alpha\right)\,ds \\
&=\int_0^t F(D^*(s))\,ds.
\end{aligned}\end{equation*}
This completes the proof.

\end{proof}
\subsection{Uniqueness of the two-dimensional helical Euler equation \eqref{eq 2euler} in $L^1_1 \bigcap L^{\infty}_1(\R^2)$.}We will show that every weak solution to \eqref{eq 2euler} can be lifted to a Lagrangian weak solution of \eqref{eq 3euler z}, thus the uniqueness of \eqref{eq 2euler} follows directly from Theorem \ref{thm uniqueness in R3}.
\begin{lemma}\label{le sec4 lagrangian}
Let $w(x,t)\in L^{\infty}([0,T],L^1 \bigcap L^{\infty}(\R^2))$ be a weak solution to the two-dimensional helical Euler equation \eqref{eq 2euler}, set
\begin{equation}\label{eq sec4 A8}
\Omega(x_1,x_2,x_3,t)=w(R_{-x_3}(x_1,x_2),t)
\end{equation}
and
\begin{equation*}
U(x,t)=\int_{\R^3} G(x,y)\Omega(y,t)\,dy.
\end{equation*}
Then $\Omega(x,t)$ satisfies the three-dimensional helical Euler equation \eqref{eq 3euler z}.
\end{lemma}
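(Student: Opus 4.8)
The plan is to reduce the three-dimensional weak formulation to the two-dimensional one by exploiting the helical structure and averaging the test function along the helical direction. First I would record two structural facts. By construction $\Omega(S_\theta x,t)=w(R_{-(x_3+\theta)}R_\theta x',t)=w(R_{-x_3}x',t)=\Omega(x,t)$, so $\Omega(\cdot,t)$ is a helical function. Moreover, since $U(x,t)=\int_{\R^3}G(x,y)\Omega(y,t)\,dy$ is the Biot--Savart velocity of the helical vorticity $\xi\Omega$, Lemma~\ref{le helical euler absence of stretching} shows that $U(\cdot,t)$ is a helical vector field; in particular $U(R_{x_3}z',x_3,t)=R_{x_3}U(z',0,t)$, and writing $V:=U(z',0,t)$ we obtain from \eqref{eq sec2 B2} that $Hw(z',t)=(V^1-z_2V^3,\,V^2+z_1V^3)$. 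The bound $|U(x,t)|\lesssim \la x'\ra\|w(\cdot,t)\|_{L^1\cap L^\infty}$ proved in Section~3 guarantees that $U$ is locally bounded, so every integral below converges absolutely on the (compact) support of the test function.

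Given any $\phi\in C_c^\infty(\R^3\times[0,+\infty))$, I would introduce the averaged two-dimensional test function
\[
\Phi(z',t):=\int_{\R}\phi(R_{x_3}z',x_3,t)\,dx_3,
\]
which lies in $C_c^\infty(\R^2\times[0,+\infty))$ because $|R_{x_3}z'|=|z'|$ and $\phi$ has compact support. Performing the measure-preserving change of variables $z'=R_{-x_3}x'$ at each fixed $x_3$ and using $\Omega(x,t)=w(R_{-x_3}x',t)$, the mass term collapses to $\int_{\R^3}\Omega(\cdot,t)\phi(\cdot,t)\,dx=\int_{\R^2}w(\cdot,t)\Phi(\cdot,t)\,dz'$, and the same reduction applies to the initial term and to $\int_{\R^3}\Omega\,\partial_t\phi\,dx=\int_{\R^2}w\,\partial_t\Phi\,dz'$.

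The heart of the argument is the transport term. Setting $\tilde\phi(z',x_3,t):=\phi(R_{x_3}z',x_3,t)$, a chain-rule computation gives $(\nabla_{x'}\phi)(R_{x_3}z',x_3)=R_{x_3}\nabla_{z'}\tilde\phi$ and $(\partial_3\phi)(R_{x_3}z',x_3)=\partial_{x_3}\tilde\phi+z_1\partial_{z_2}\tilde\phi-z_2\partial_{z_1}\tilde\phi$. Combining these with the helical identity $U(R_{x_3}z',x_3)=R_{x_3}V$ and the orthogonality of $R_{x_3}$, the dot product simplifies to
\[
(U\cdot\nabla_x\phi)(R_{x_3}z',x_3)=(V^1-z_2V^3)\partial_{z_1}\tilde\phi+(V^2+z_1V^3)\partial_{z_2}\tilde\phi+V^3\partial_{x_3}\tilde\phi=Hw(z',t)\cdot\nabla_{z'}\tilde\phi+V^3\partial_{x_3}\tilde\phi.
\]
Integrating this against $w$ over $\R^2\times\R$, the compact support of $\tilde\phi$ in $x_3$ forces $\int_{\R}V^3\partial_{x_3}\tilde\phi\,dx_3=0$, while $\int_{\R}\nabla_{z'}\tilde\phi\,dx_3=\nabla_{z'}\Phi$; hence $\int_{\R^3}\Omega\,U\cdot\nabla\phi\,dx=\int_{\R^2}w\,Hw\cdot\nabla\Phi\,dz'$.

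Finally, substituting the three reductions into the three-dimensional weak formulation turns it exactly into the two-dimensional weak formulation for $w$ tested against $\Phi$, which holds by hypothesis; since $\phi$ was arbitrary, $\Omega$ is a weak solution of \eqref{eq 3euler z}. I expect the main obstacle to be the bookkeeping in the chain-rule identity above: verifying that the swirl component $V^3$ enters the horizontal transport as precisely $(-z_2,z_1)V^3$ (so as to rebuild $Hw$) while its vertical contribution assembles into a pure $\partial_{x_3}$-derivative that annihilates upon integration in $x_3$. The remaining points (Fubini, differentiation under the integral sign, and absolute convergence) are routine given the compact support of $\phi$ and the local bound on $U$.
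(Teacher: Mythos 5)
Your proposal is correct -- I checked the rotating-frame chain rule, which is the crux: with $\tilde\phi(z',x_3,s)=\phi(R_{x_3}z',x_3,s)$ one indeed has $(\partial_3\phi)(R_{x_3}z',x_3)=\partial_{x_3}\tilde\phi+z_1\partial_{z_2}\tilde\phi-z_2\partial_{z_1}\tilde\phi$, and combined with $U(R_{x_3}z',x_3)=R_{x_3}U(z',0)$ and \eqref{eq sec2 B2} this reproduces exactly $Hw\cdot\nabla_{z'}\tilde\phi+U^3(z',0)\,\partial_{x_3}\tilde\phi$, whose last term integrates to zero in $x_3$. Your overall strategy is the same as the paper's: reduce the 3D weak formulation to the 2D one through the measure-preserving change of variables $z'=R_{-x_3}x'$ and the identification of $Hw$ with $\left(U^1-z_2U^3,\,U^2+z_1U^3\right)$ at $x_3=0$. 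The execution differs in two respects. First, the paper applies the 2D weak formulation to the family of test functions $(x',s)\mapsto\phi(R_{x_3}x',x_3,s)$, one for each fixed $x_3$, and then integrates in $x_3$; you average first, producing the single test function $\Phi=\int_{\R}\phi(R_{x_3}\cdot,x_3,\cdot)\,dx_3$, and apply the weak formulation once -- these are equivalent by Fubini. Second, and more substantively, the paper disposes of the vertical transport term $\int(\Omega U^3)\partial_3\phi\,dx$ by invoking the distributional identity $\partial_3 f=x_1\partial_2 f-x_2\partial_1 f$ of Lemma \ref{le partial3 scalar} applied to the merely locally bounded helical function $\Omega U^3$ (this is precisely why the paper proved that lemma for non-smooth $f$), whereas you keep every derivative on the smooth test function, so the swirl's vertical contribution shows up as an exact $\partial_{x_3}$-derivative killed by compact support. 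Your variant is slightly more self-contained on this point, since it needs no distributional calculus on rough helical functions, only the pointwise helical symmetry \eqref{eq helical vector} of the continuous field $U$; the paper's variant, in exchange, reuses machinery it has already built and will use elsewhere. Both proofs rest on the same underlying fact, namely that helical invariance converts $\partial_3$ into the rotational derivative $x_1\partial_2-x_2\partial_1$.
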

\begin{proof}
By definition of the weak solutions, it suffices to show that for any $\phi \in C_c^{\infty}(\R^3\times \R)$,
\begin{equation*}
\int_{\R^3} \Omega(x,t)\phi(x,t) \,dx-\int_{\R^3} \Omega(x,0)\phi(x,0)\,dx=\int_0^t \int_{\R^3} \Omega (\partial_t \phi +U\cdot \nabla \phi) \,dxdt.
\end{equation*}
First we observe that (using \eqref{eq sec4 A8})
\begin{equation*}\begin{aligned}
&\int_{\R^3} \Omega(x,t)\phi(x,t) \,dx-\int_{\R^3} \Omega(x,0)\phi(x,0)\,dx  \\=&\int_{\R^3} w(R_{-x_3}x',t)\phi(x,t) \,dx-\int_{\R^3} w(R_{-x_3}x',0)\phi(x,0)\,dx \\
=&\int_{\R} \left( \int_{\R^2} w(x_1,x_2,t)\phi(R_{x_3}x',x_3,t)\,dx'-\int_{\R^2} w(x_1,x_2,0)\phi(R_{x_3}x',x_3,0)\,dx' \right) \,dx_3,
\end{aligned}\end{equation*}
which implies (we use the notation $\overline{\nabla}=(\partial_1,\partial_2)$ for simplicity)
\begin{equation*}\begin{aligned}
&\int_{\R^3} \Omega(x,t)\phi(x,t) \,dx-\int_{\R^3} \Omega(x,0)\phi(x,0)\,dx \\=&\int_{\R} \left( \int_0^t \int_{\R^2} w(x_1,x_2,s)\left[ \partial_s +Hw(x_1,x_2,s)\cdot \overline{\nabla} \right]\left( \phi(R_{x_3}x',x_3,s) \right) \,dx'ds \right)  \,dx_3  \\
=&\int_0^t\int_{\R} \left(  \int_{\R^2} w(x_1,x_2,s)\left[ \partial_s +Hw(x_1,x_2,s)\cdot \overline{\nabla} \right]\left( \phi(R_{x_3}x',x_3,s) \right) \,dx' \right)  \,dx_3 \,ds
\end{aligned}\end{equation*}
since $w$ is a weak solution to \eqref{eq 2euler}.
Therefore, to complete the proof of the lemma, it remains to show that
\begin{equation}\label{eq sec4 A9}\begin{aligned}
&\int_{\R^3} \Omega(x,s) \left[ \partial_s+U\cdot \nabla \right]\phi(x,s)\,dx \\
=&\int_{\R} \left(  \int_{\R^2} w(x_1,x_2,s)\left( \partial_s +Hw(x_1,x_2,s)\cdot \overline{\nabla} \right)\left( \phi(R_{x_3}x',x_3,s) \right) \,dx' \right)  \,dx_3.
\end{aligned}\end{equation}
Since $\Omega$ is a helical function, we have
\begin{equation}\label{eq sec4 B0}\begin{aligned}
\int_{\R^3} \Omega(x,s) \partial_s \phi(x,s)\,dx &=\int_{\R^3} w(R_{-x_3}x',s) \partial_s \phi(x,s)\,dx \\
&=\int_{\R^3}w(x_1,x_2,s) \partial_s \phi(R_{x_3}x',x_3,s)\,dx.
\end{aligned}\end{equation}
Recall that
\begin{equation}\label{eq mei hw}
Hw(x_1,x_2)=(U^1(x',0),U^2(x',0))+(-x_2,x_1)U^3(x',0),
\end{equation}
where $U^3(\cdot,t)$ are helical functions on $\R^3$.  So we use \eqref{eq sec2 A0}, \eqref{eq partial3 scalar}, \eqref{eq mei hw} and the fact that $\Omega U^3$ is a helical function to get
\begin{equation}\label{eq mei 88888}\begin{aligned}
&\int_{\R^3} \Omega(x,s)(U^1\partial_1\phi+U^2\partial_2\phi) \,dx +\int_{\R^3} (\Omega U^3) \partial_3 \phi \,dx \\=& \int_{\R^3} \Omega(x,s)(U^1\partial_1\phi+U^2\partial_2\phi) \,dx +\int_{\R^3} \Omega (x_1U^3\partial_2\phi -x_2U^3\partial_1 \phi ) \,dx.
\end{aligned}\end{equation}
Note that for a smooth function $\phi =\phi(x_1,x_2)$,
\begin{equation*}
\overline{\nabla} \left( \phi(R_{x_3}x') \right) =R_{-x_3}\left(\nabla \phi(R_{x_3}x')\right).
\end{equation*}
So in view of \eqref{eq mei 88888}, there holds
\begin{equation}\begin{aligned}
&\int_{\R^3} \Omega(x,s)(U^1\partial_1\phi+U^2\partial_2\phi) \,dx +\int_{\R^3} (\Omega U^3) \partial_3 \phi \,dx \\
=& \int_{\R} \int_{\R^2} w(x_1,x_2,s) Hw(x_1,x_2,s)\cdot \overline{\nabla} \left( \phi(R_{x_3}x',x_3,s) \right) \,dx_1dx_2   \,dx_3.
\end{aligned}\end{equation}
Together with \eqref{eq sec4 B0} we finally get \eqref{eq sec4 A9} and hence complete the proof.

\end{proof}
\begin{lemma}\label{le sec4 A0}
Assume $w(x,t)\in L^{\infty}([0,T],L^1 \bigcap L^{\infty}(\R^2))$ is Lagrangian weak solution of \eqref{eq 2euler}, then
\begin{equation}\label{eq sec4 B3}
\Omega(x',x_3,t)=w(R_{-x_3}x',t)
\end{equation}
is a Lagrangian weak solution to \eqref{eq 3euler z}.
\end{lemma}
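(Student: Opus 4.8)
The plan is to show that $\Omega(x',x_3,t)=w(R_{-x_3}x',t)$ satisfies the vortex transport formula $\Omega(Y(\alpha,t),t)=\Omega(\alpha,0)$, where $Y(\alpha,t)$ is the particle trajectory map associated with $U$. By Lemma \ref{le sec4 lagrangian} we already know that $\Omega$ is a weak solution to \eqref{eq 3euler z}, and since $\Omega\in L^{\infty}([0,T],L^1_1\cap L^{\infty}_1(\R^2\times\T))$, Corollary \ref{co log-lip in R^3} guarantees that $U$ is Log-Lipschitz continuous, so $Y$ is well-defined and unique. Thus it remains only to verify the transport identity, which is what makes the weak solution Lagrangian.

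First I would construct $Y$ explicitly from the two-dimensional data. Let $X(\alpha',t)=(X_1,X_2)$ be the particle trajectory map of $Hw$ in $\R^2$ (well-defined by Subsection $3.2$), set
\begin{equation*}
X_3(\alpha',t)=\int_0^t U^3(X(\alpha',s),0)\,ds, \qquad Y(\alpha',0,t)=\big(R_{X_3}X(\alpha',t),\,X_3(\alpha',t)\big),
\end{equation*}
and extend to all $\alpha$ by $Y(\alpha,t)=S_{\alpha_3}Y(R_{-\alpha_3}\alpha',0,t)$; a direct check gives $Y(\alpha,0)=\alpha$. The crucial step is to verify that $Y$ solves $\dot Y=U(Y,t)$. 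Differentiating the horizontal part and using $\tfrac{d}{d\theta}R_\theta x'=R_\theta(x_2,-x_1)$, the term coming from the rotating frame, namely $\dot X_3\,R_{X_3}(X_2,-X_1)=U^3(X,0)\,R_{X_3}(X_2,-X_1)$, exactly cancels the purely rotational part $R_{X_3}\big[(-X_2,X_1)U^3(X,0)\big]$ of $R_{X_3}Hw(X)$, since $(X_2,-X_1)+(-X_2,X_1)=0$. What survives is $R_{X_3}\big[(U^1,U^2)(X,0)\big]$, which equals $(U^1,U^2)(Y)$ by the helical identity \eqref{eq helical vector} together with $Y_3=X_3$ and $R_{-X_3}(Y_1,Y_2)=X$. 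For the third component, $\dot X_3=U^3(X,0)=U^3(R_{-X_3}(Y_1,Y_2),0)=U^3(Y)$ because $U^3$ is a helical function. Hence $Y$ solves the correct ODE, and uniqueness (via the Log-Lipschitz regularity of $U$) identifies it with the trajectory map of $U$.

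With $Y$ in hand the transport formula follows. At $\alpha_3=0$,
\begin{equation*}
\Omega(Y(\alpha',0,t),t)=w(R_{-X_3}R_{X_3}X(\alpha',t),t)=w(X(\alpha',t),t)=w(\alpha',0)=\Omega(\alpha',0,0),
\end{equation*}
where the third equality is the Lagrangian property of $w$ in $\R^2$. For general $\alpha_3$, using that $\Omega(\cdot,t)$ is helical together with $Y(\alpha,t)=S_{\alpha_3}Y(R_{-\alpha_3}\alpha',0,t)$, one obtains
\begin{equation*}
\Omega(Y(\alpha,t),t)=\Omega(Y(R_{-\alpha_3}\alpha',0,t),t)=\Omega(R_{-\alpha_3}\alpha',0,0)=w(R_{-\alpha_3}\alpha',0)=\Omega(\alpha,0),
\end{equation*}
which shows that $\Omega$ is a Lagrangian weak solution to \eqref{eq 3euler z}.

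The main obstacle is the ODE verification in the second step: one must differentiate the rotating-frame expression $R_{X_3}X$ carefully and exhibit the cancellation between the frame's angular velocity and the rotational component $H_2w=(-x_2,x_1)U^3$ of the two-dimensional velocity, and then match the remaining horizontal term to $U$ through the helical vector-field identity \eqref{eq helical vector}. Everything else is bookkeeping with the helical symmetry and the already-established uniqueness of the trajectory map.
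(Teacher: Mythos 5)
Your proof is correct and follows essentially the same route as the paper: the same construction of $Y$ from $(X,X_3)$, the same ODE verification (the paper's computation of $\dot Y$ is exactly your rotating-frame/$H_2w$ cancellation), and the same use of helical symmetry plus the two-dimensional Lagrangian property to get the transport formula. The only slip is cosmetic: the hypothesis gives $\Omega\in L^{\infty}([0,T],L^1\bigcap L^{\infty}(\R^2\times \T))$, not the weighted space you cite, but the local Log-Lipschitz estimate \eqref{eq log-lip in R^3} needed for the trajectory map uses only the unweighted norm, so nothing breaks.
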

\begin{proof}
Let $X(\alpha_1,\alpha_2,t)=(X_1(\alpha_1,\alpha_2,t),X_2(\alpha_1,\alpha_2,t))$ be the particle trajectory map associated to $Hw$. Motivated by \eqref{eq helical particle map}, we define
\begin{equation*}
X_3(\alpha_1,\alpha_2,t)=\int_0^t U_3(X_1(\alpha_1,\alpha_2,s),X_2(\alpha_1,\alpha_2,s),0)\,ds,
\end{equation*}
and
\begin{equation}\label{eq sec4 B2}
Y(\alpha_1,\alpha_2,0,t)=(R_{X_3}(X_1,X_2),X_3).
\end{equation}
We claim that
\begin{equation}\label{eq sec4 B1}
Y(\alpha_1,\alpha_2,\alpha_3,t):=S_{\alpha_3}Y(R_{-\alpha_3}(\alpha_1,\alpha_2),0,t)
\end{equation}
is the particle trajectory map associated with $U$ in $\R^3$. That is,
\begin{equation*}\begin{cases}\begin{aligned}
\frac{dY(\alpha,t)}{dt}&= U(Y(\alpha,t),t)\\
Y(\alpha,0)&=\alpha.
\end{aligned}\end{cases}\end{equation*}
Indeed, a direct calculation shows that
\begin{equation*}\begin{aligned}
\frac{dY_1(\alpha_1,\alpha_2,0,t)}{dt}=U_1\cos(X_3)+U_2\sin(X_3),
\end{aligned}\end{equation*}

\begin{equation*}
\frac{dY_2(\alpha_1,\alpha_2,0,t)}{dt}=-U_1\sin(X_3)+U_2\cos(X_3)
\end{equation*}
and
\begin{equation*}
\frac{dY_3(\alpha_1,\alpha_2,0,t)}{dt}=U_3.
\end{equation*}
Hence,
\begin{equation*}
\frac{dY(\alpha_1,\alpha_2,0,t)}{dt}=R_{X_3}U(X_1,X_2,0)=U(R_{X_3}(X_1,X_2),X_3)
\end{equation*}
since $U$ is a helical vector field. Then \eqref{eq sec4 B2} yields
\begin{equation*}
\frac{dY(\alpha_1,\alpha_2,0,t)}{dt}=U(Y(\alpha_1,\alpha_2,0,t),t),
\end{equation*}
which combined with \eqref{eq sec4 B1}, gives
\begin{equation*}\begin{aligned}
\frac{dY(\alpha_1,\alpha_2,\alpha_3,t)}{dt}&=\frac{dS_{\alpha_3}Y(R_{-\alpha_3}(\alpha_1,\alpha_2),0,t)}{dt} \\
&=\frac{dR_{\alpha_3}Y(R_{-\alpha_3}(\alpha_1,\alpha_2),0,t)}{dt} \\
&=R_{\alpha_3}U(Y(R_{-\alpha_3}(\alpha_1,\alpha_2),0,t),t).
\end{aligned}\end{equation*}
Finally, using \eqref{eq helical vector} and \eqref{eq sec4 B1}, we have that
\begin{equation*}\begin{aligned}
\frac{dY(\alpha_1,\alpha_2,\alpha_3,t)}{dt}&=R_{\alpha_3}U(Y(S_{-\alpha_3}\alpha,t),t) \\
&=U(S_{\alpha_3}Y(S_{-\alpha_3}\alpha,t),t) \\
&=U(Y(\alpha_1,\alpha_2,\alpha_3,t),t),
\end{aligned}\end{equation*}
which implies that $Y$ is the particle trajectory map of $U$ in $\R^3$ and hence completes the proof of the claim. \\Now it remains to show that
\begin{equation*}\begin{aligned}
\Omega(Y(\alpha,t),t) =\Omega_0(\alpha).
\end{aligned}\end{equation*}
To this end, we set $\beta=R_{-\alpha_3}(\alpha_1,\alpha_2)$. Then it follows from \eqref{eq helical function} and \eqref{eq helical particle map} that
\begin{equation*}\begin{aligned}
\Omega(Y(\alpha,t),t)&=\Omega (S_{\alpha_3}Y(\beta,0,t),t) \\
&=\Omega (Y(\beta,0,t),t) \\
&=\Omega (S_{-Y_3(\beta,0,t)}Y(\beta,0,t),t),
\end{aligned}\end{equation*}
which yields (using \eqref{eq sec4 B2} and \eqref{eq sec4 A8})
\begin{equation*}\begin{aligned}
\Omega(Y(\alpha,t),t)&=\Omega (S_{-X_3(\beta,t)}(R_{X_3}(X_1,X_2),X_3)(\beta,t),t) \\
&=\Omega (X_1(\beta,t),X_2(\beta,t),0,t) \\
&=w (X_1(\beta,t),X_2(\beta,t),t)\\
&=w_0(\beta)
\end{aligned}\end{equation*}
since $w$ is a Lagrangian solution.
Thus in view of \eqref{eq sec4 B3}, we get
\begin{equation*}\begin{aligned}
\Omega(Y(\alpha,t),t)
 =\Omega_0(\alpha),
\end{aligned}\end{equation*}
which completes the proof.
\end{proof}

 An immediate consequence of Lemma \ref{le sec4 A0} is:
\begin{corollary}
Every weak solution $w(x,t)$ of \eqref{eq 2euler} can be lifted to a weak Lagrangian solution to \eqref{eq 3euler z} with $\Omega^z(x,t)=w(R_{-x_3}(x_1,x_2),t)$. Moreover, such lifting is injective.
\end{corollary}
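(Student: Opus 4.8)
The plan is to read the corollary as the conjunction of two assertions: first, that the helical lift $\Omega^z(x',x_3,t):=w(R_{-x_3}x',t)$ of a weak solution $w$ of \eqref{eq 2euler} is again a \emph{Lagrangian} weak solution of \eqref{eq 3euler z}; and second, that the assignment $w\mapsto\Omega^z$ is injective. Both pieces are essentially contained in the two preceding lemmas, so the real work lies in assembling them and in dispatching the one genuine subtlety, namely the passage from ``weak'' to ``Lagrangian'' at the level of the two-dimensional solution.

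For the first assertion I would proceed in two stages. Lemma \ref{le sec4 lagrangian} already shows that $\Omega^z$ given by \eqref{eq sec4 A8} is a weak solution of \eqref{eq 3euler z}, so it only remains to verify the vortex transport formula. If $w$ is itself Lagrangian, this is exactly Lemma \ref{le sec4 A0}: one lifts the two-dimensional trajectory map $X(\alpha_1,\alpha_2,t)$ to the three-dimensional map $Y$ through \eqref{eq sec4 B2} and \eqref{eq sec4 B1}, and the helical identities \eqref{eq helical function} and \eqref{eq helical particle map} then propagate $w(X,t)=w_0$ to $\Omega^z(Y,t)=\Omega^z_0$. Hence the only gap is to argue that an arbitrary weak solution $w\in L^{\infty}([0,T],L^1\cap L^{\infty}(\R^2))$ is automatically Lagrangian. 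Here the divergence-free structure $\nabla\cdot Hw=0$ from Proposition \ref{prop 3 euler} enters: since $Hw$ is locally Log-Lipschitz by Proposition \ref{prop boundness of H(x,y)} and Lemma \ref{le sec3 A0}, its flow is unique, and every weak solution of the associated transport equation agrees with the Lagrangian one (this is the role of Lemma \ref{le uniqueness of the transport equation} invoked after Theorem \ref{thm main}). I expect this to be the main obstacle, since it is the one place where one must appeal to uniqueness for the linear transport equation rather than simply quote the two reduction lemmas.

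For injectivity I would invoke the norm identity of Lemma \ref{le norm of R^2 and R^3}, which makes the lift a constant multiple of an isometry: for each fixed $t$ and each $p\in[1,\infty]$ one has $\|w\|_{L^p_m(\R^2)}=(2\pi)^{-1/p}\|\Omega^z\|_{L^p_m(\R^2\times\T)}$. Thus if two weak solutions $w_1,w_2$ produce the same lift, applying this identity with $p=1$, $m=1$ to $w_1-w_2$ yields $\|w_1-w_2\|_{L^1_1(\R^2)}=(2\pi)^{-1}\|\Omega^z_1-\Omega^z_2\|_{L^1_1(\R^2\times\T)}=0$ for a.e.\ $t$, whence $w_1=w_2$. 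Equivalently, one may observe that $\Omega^z(x',0,t)=w(R_0 x',t)=w(x',t)$, so $w$ is recovered as the trace of its lift on $\{x_3=0\}$, this trace being well defined via the helical averaging used to interpret $\Omega^z(\cdot,0,t)$. Either route closes the argument with no further estimates, so the corollary follows as soon as the Lagrangian upgrade of the second paragraph is secured.
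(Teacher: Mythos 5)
Your proposal follows essentially the same route as the paper's proof: the paper likewise upgrades an arbitrary weak solution to a Lagrangian one via Lemma \ref{le weak solution is lagrangian}, lifts it to a Lagrangian weak solution of \eqref{eq 3euler z} through Lemma \ref{le sec4 lagrangian} and Lemma \ref{le sec4 A0}, and deduces injectivity from the norm equivalence of Lemma \ref{le norm of R^2 and R^3} applied to the difference of two solutions. The only caveat is that the paper reads the corollary in $L^{\infty}([0,T],L^1_1\cap L^{\infty}_1(\R^2))$, the setting in which Lemma \ref{le weak solution is lagrangian} is actually stated, whereas you assert the Lagrangian upgrade for merely $L^1\cap L^{\infty}$ data; within the weighted class (which your injectivity step with $p=1$, $m=1$ implicitly assumes anyway) your argument and the paper's coincide.
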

\begin{proof}
It follows from Lemma \ref{le weak solution is lagrangian} that every weak solution $w(\cdot,t)$ to \eqref{eq 2euler} in $L^1_1\bigcap L^{\infty}_1(\R^2)$ is indeed a Lagrangian weak solution. So Lemma \ref{le sec4 lagrangian} implies that every weak solution to \eqref{eq 2euler} can be lifted to a weak Lagrangian solution to \eqref{eq 3euler z}. Thus it remains to show that such lifting is injective. Let $w$ and $\tilde{w}$ be two different weak solution of \eqref{eq 2euler}, then
\begin{equation*}
\|\Omega^z(\cdot,t)-\tilde{\Omega}^z(\cdot,t)\|_{L^1_1\bigcap L^{\infty}_1(\R^2 \times \T)} \approx \|w(\cdot,t)-\tilde{w}(\cdot,t)\|_{L^1_1 \bigcap L^{\infty}_1(\R^2)} \neq 0
\end{equation*}
as a consequence of Lemma \ref{le norm of R^2 and R^3}.
\end{proof}
Together with Theorem \ref{thm uniqueness in R3}, we obtain the uniqueness of weak solutions to the two-dimensional helical Euler equation.

\begin{corollary}\label{co uniqueness}
Assume $w,\tilde{w} \in L^{\infty}([0,T],L^1_1\bigcap L^{\infty}_1(\R^2))$ are two weak solutions to \eqref{eq 2euler} with the same initial data $w_0$, then $w=\tilde{w}$.
\end{corollary}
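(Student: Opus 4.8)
The plan is to transfer the uniqueness question to the three-dimensional helical Euler equation \eqref{eq 3euler z}, where uniqueness has already been settled in Theorem \ref{thm uniqueness in R3}, and then pull the conclusion back to two dimensions through the injective lifting constructed just above. In other words, the entire analytic content has been front-loaded into Theorem \ref{thm uniqueness in R3}, and what remains is to check that the lifting machinery applies and preserves the relevant function spaces and initial data.

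First I would lift both solutions. Given $w,\tilde{w}\in L^{\infty}([0,T],L^1_1\bigcap L^{\infty}_1(\R^2))$ solving \eqref{eq 2euler} with the common datum $w_0$, set $\Omega^z(x',x_3,t):=w(R_{-x_3}x',t)$ and $\tilde{\Omega}^z(x',x_3,t):=\tilde{w}(R_{-x_3}x',t)$. Since $w,\tilde{w}$ lie in $L^1_1\bigcap L^{\infty}_1(\R^2)$, they are automatically Lagrangian (Lemma \ref{le weak solution is lagrangian}), so the lifting corollary preceding this statement, together with Lemma \ref{le sec4 A0}, guarantees that $\Omega^z$ and $\tilde{\Omega}^z$ are Lagrangian weak solutions of \eqref{eq 3euler z}. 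Moreover they share the same initial value, because $\Omega^z_0(x',x_3)=w_0(R_{-x_3}x')=\tilde{\Omega}^z_0(x',x_3)$.

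Next I would verify the remaining hypothesis of Theorem \ref{thm uniqueness in R3}, namely membership in the weighted space on $\R^2\times\T$. Here Lemma \ref{le norm of R^2 and R^3} is decisive: the map $w\mapsto\Omega^z$ is, up to the constant factor $(2\pi)^{1/p}$, an isometry between $L^p_m(\R^2)$ and $L^p_m(\R^2\times\T)$ for every $p\in[1,\infty]$, whence $\Omega^z,\tilde{\Omega}^z\in L^{\infty}([0,T],L^1_1\bigcap L^{\infty}_1(\R^2\times\T))$. Theorem \ref{thm uniqueness in R3} then yields $\Omega^z=\tilde{\Omega}^z$. Applying Lemma \ref{le norm of R^2 and R^3} once more (the injectivity of the lifting) gives $\|w-\tilde{w}\|_{L^1_1\bigcap L^{\infty}_1(\R^2)}\approx\|\Omega^z-\tilde{\Omega}^z\|_{L^1_1\bigcap L^{\infty}_1(\R^2\times\T)}=0$, so $w=\tilde{w}$.

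The only genuine obstacle in this argument was already overcome in Theorem \ref{thm uniqueness in R3}, where the Osgood estimate $D(t)\lesssim\int_0^t F(D(s))\,ds$ for the three-dimensional distance function was established and Osgood's Lemma invoked. At the present assembly stage no new difficulty arises; the steps are purely bookkeeping, and the subtlety one must not overlook is merely confirming that the lifting respects both the weighted $L^1_1\bigcap L^{\infty}_1$ norms and the common initial datum, which Lemmas \ref{le norm of R^2 and R^3} and \ref{le sec4 A0} supply directly.
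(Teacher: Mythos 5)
Your proposal is correct and follows essentially the same route as the paper: lift both solutions via $\Omega^z(x',x_3,t)=w(R_{-x_3}x',t)$, use Lemma \ref{le weak solution is lagrangian} together with Lemma \ref{le sec4 A0} to see the lifts are Lagrangian weak solutions of \eqref{eq 3euler z} with the same initial datum, check membership in $L^{\infty}([0,T],L^1_1\bigcap L^{\infty}_1(\R^2\times\T))$ via Lemma \ref{le norm of R^2 and R^3}, apply Theorem \ref{thm uniqueness in R3}, and pull back through the norm equivalence. This is precisely the paper's (implicit) argument, so nothing further is needed.
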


\section{Global existence of weak solutions and conservation laws.}

The main purpose of this section is to show that all weak solutions to the two-dimensional helical Euler equation \eqref{eq 2euler} are global in $L^1_1 \cap L^{\infty}_1(\R^2)$. Furthermore, this section also establishes the conservation of energy and the second momentum.

\subsection{A formal argument}
We will assume in this subsection that all functions are smooth enough and exhibit sufficient decay at
infinity. The well-known Beale-Kato-Majda criterion suggests that a solution $\Omega$ of the three-dimensional Euler equation \eqref{eq 3euler} blows-up at time $T$ if and only if
\begin{equation*}
\int_0^T \|\Omega\|_{L^{\infty}(\R^3)}\,dt= \infty.
\end{equation*}
For $\Omega=(x_2\Omega^z,-x_1\Omega^z,\Omega^z)$ and
 $\Omega^z(x_1,x_2,x_3)=w(R_{-x_3}(x_1,x_2))$, we have

\begin{equation*}
\|\Omega\|_{L^{\infty}(\R^3)}=\|\Omega^z\|_{L^{\infty}_1(\R^3)}=\|w\|_{L^{\infty}_1(\R^2)}.
\end{equation*}
Recall that $w$ satisfies the (transport) equation
\begin{equation*}
\partial_t w +Hw\cdot \nabla w=0
\end{equation*}
with $\nabla \cdot Hw=0$. Multiply both sides by $\la x\ra$, we obtain
\begin{equation*}\begin{aligned}
\partial_t(\la x\ra w)+Hw \cdot \nabla (\la x\ra w)&=w Hw \cdot \nabla \la x\ra \\
&=w H_1w \cdot \nabla \la x\ra
\end{aligned}\end{equation*}
since $Hw=H_1w+H_2w$ and $x\cdot H_2w$=0. Therefore,
\begin{equation*}\begin{aligned}
\|\la x\ra w(x,t)\|_{L^1 \bigcap L^{\infty}} \le \|\la x\ra w_0(x)\|_{L^1 \bigcap L^{\infty}}+\int_0^t \|wH_1w(x,s)\|_{L^1 \bigcap L^{\infty}} \,ds .
\end{aligned}\end{equation*}
To bound the second term on the right-hand side, we use \eqref{eq boundness for H_1w} to estimate
\begin{equation*}\begin{aligned}
\|wH_1w\|_{L^1 \bigcap L^{\infty}} &\le \|Hw\|_{L^{\infty}}\|w\|_{L^1 \bigcap L^{\infty}} \\
& \lesssim  \|w\|_{L^1_1 \bigcap L^{\infty}_1}\|w\|_{L^1 \bigcap L^{\infty}},
\end{aligned}\end{equation*}
which implies (recall that $\|w(\cdot,t)\|_{L^p}=\|w_0\|_{L^p}$ since $\nabla \cdot Hw=0$)
\begin{equation*}
\|w(\cdot,t)\|_{L^1_1 \bigcap L^{\infty}_1} \le \|w_0\|_{L^1_1 \bigcap L^{\infty}_1}+C\|w_0\|_{L^1 \bigcap L^{\infty}} \int_0^t \|w(\cdot,s)\|_{L^1_1 \bigcap L^{\infty}_1}\,ds.
\end{equation*}
So it follows from Gronwall's inequality that $\|w(\cdot,t)\|_{L^1_1 \bigcap L^{\infty}_1} \lesssim Ce^{Ct}$ for all $t \ge 0$ and hence the Beale-Kato-Majda criterion implies at least formally that solutions to \eqref{eq 2euler} are global. More precisely, we have
\begin{theorem}\label{thm existence of smooth solution}
For any $w_0 \in C_c^{\infty}(\R^2)$, there exist a global Lagrangian weak solution $w(x,t)$ to \eqref{eq 2euler} with initial vorticity $w_0$. Furthermore, for any $t \in [0,T]$, there exists a constant $C$ depending only on $T$ and $\|w_0\|_{L^1_1\bigcap L^{\infty}_1(\R^2)}$ such that
\begin{equation*}
\sup_{t\in[0,T]}\|w(\cdot,t)\|_{L^1_1 \bigcap L^{\infty}_1(\R^2)} \le C_{T,w_0}.
\end{equation*}
\end{theorem}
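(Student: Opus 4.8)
The plan is to construct the solution by a Picard-type iteration and to identify the limit through an Osgood argument following Crippa--Stefani, as outlined in the introduction. First I would set $w^0(x,t):=w_0(x)$ and define $w^{n+1}$ inductively as the solution of the linear transport equation
\begin{equation*}
\partial_t w^{n+1}+Hw^n\cdot\nabla w^{n+1}=0,\qquad w^{n+1}(x,0)=w_0(x).
\end{equation*}
Since $Hw^n$ is divergence free and, by Proposition~\ref{prop boundness of H(x,y)} together with the local Osgood property established in Section~3, locally log-Lipschitz in space, its particle trajectory map $X^n(\alpha,t)$ is well defined and is a measure-preserving homeomorphism for each $t$. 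Hence $w^{n+1}$ is obtained by pushing $w_0$ forward along $X^n$, so that $w^{n+1}(X^n(\alpha,t),t)=w_0(\alpha)$; in particular each $w^{n+1}$ is Lagrangian, keeps compact support, and satisfies $\|w^{n+1}(\cdot,t)\|_{L^p}=\|w_0\|_{L^p}$ for every $p$.

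Next I would make the formal computation carried out above rigorous at the level of the iterates to obtain $n$-independent bounds. Multiplying the transport equation for $w^{n+1}$ by $\la x\ra$ and using $x\cdot H_2w^n=0$ gives, as in the formal argument,
\begin{equation*}
\|w^{n+1}(\cdot,t)\|_{L^1_1\bigcap L^\infty_1}\le \|w_0\|_{L^1_1\bigcap L^\infty_1}+C\|w_0\|_{L^1\bigcap L^\infty}\int_0^t\|w^n(\cdot,s)\|_{L^1_1\bigcap L^\infty_1}\,ds,
\end{equation*}
where the bound on $H_1w^n$ of Proposition~\ref{prop boundness of H(x,y)} controls the right-hand side. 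A straightforward induction combined with Gronwall's inequality then yields $\sup_n\sup_{t\in[0,T]}\|w^n(\cdot,t)\|_{L^1_1\bigcap L^\infty_1(\R^2)}\le C_{T,w_0}$. This uniform bound also forces $|Hw^n(x)|\lesssim\la x\ra$, so the flows $X^n$ keep the supports of the $w^n$ inside a fixed ball $B_{R(T)}$ and are uniformly locally log-Lipschitz on $[0,T]$.

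With these estimates in hand I would run the standard compactness argument of \cite{MaB}. The uniform bounds, the fixed compact supports, and the uniform log-Lipschitz control of $X^n$ give, via Arzel\`a--Ascoli for the flows and weak-$\ast$ compactness for the vorticities, a subsequence $n_k$ along which $w_{n_k}\to w$ and $w_{n_k+1}\to\tilde w$ (both in $L^\infty([0,T],L^1\bigcap L^\infty)$, compactly supported) with the associated flows converging. Passing to the limit in the weak formulation produces $\partial_t w+H\tilde w\cdot\nabla w=0$, and it remains only to prove $w=\tilde w$.

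This last identification is the crux and the main obstacle. Following Crippa--Stefani I would introduce $D_n(t):=\int_{\R^2}\bigl|X^n(\alpha,t)-X^{n+1}(\alpha,t)\bigr|\,|w_0(\alpha)|\,d\alpha$, differentiate in $t$, and split
$$Hw^n(X^n)-Hw^{n+1}(X^{n+1})=\bigl[Hw^n(X^n)-Hw^n(X^{n+1})\bigr]+\bigl[Hw^n(X^{n+1})-Hw^{n+1}(X^{n+1})\bigr].$$
The first bracket is handled by the log-Lipschitz-in-$x$ kernel estimates of Section~3 and produces an $F(\cdot)$ of the integrand of $D_n$; the second, after rewriting $\int H(x,y)(w^n-w^{n+1})(y)\,dy$ as an integral against $w_0$ through the changes of variables $y=X^{n-1}(\beta,t)$ and $y=X^n(\beta,t)$ (using measure preservation), is controlled by the log-Lipschitz-in-$y$ estimate and thereby brings in $D_{n-1}$. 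Jensen's inequality (concavity of $F$) then yields an integral inequality of the schematic form $D_n(t)\lesssim\int_0^t\bigl[F(D_n(s))+F(D_{n-1}(s))\bigr]\,ds$. Setting $D^*(t):=\limsup_n D_n(t)$ and using subadditivity of $\limsup$, the concavity and monotonicity of $F$, and a Fatou passage in the time integral, this collapses to
\begin{equation*}
D^*(t)\lesssim\int_0^t F(D^*(s))\,ds,\qquad D^*(0)=0.
\end{equation*}
Osgood's lemma, applicable because $F$ is the modulus \eqref{eq def of F}, forces $D^*\equiv0$, so $\{X^n\}$ is Cauchy in $L^1(|w_0|\,dx)$, whence $X=\tilde X$ and $w=\tilde w$. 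The limit $w$ is then a global Lagrangian weak solution, and the quantitative bound is inherited from the uniform estimate by lower semicontinuity of the norms under the convergence. The delicate points I anticipate are precisely the coupling of $D_n$ to $D_{n-1}$ — which is why one must pass to $\limsup$ rather than argue for fixed $n$ — and verifying that the weight growth in the kernel estimates is absorbed by the $L^1_1$-bound, so that all constants in the integral inequality remain uniform in $n$.
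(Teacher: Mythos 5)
Your proposal follows essentially the same route as the paper's own proof (sketched in its Appendix B): the same Picard iteration $\partial_t w^{n+1}+Hw^n\cdot\nabla w^{n+1}=0$, the same weighted $L^1_1\bigcap L^\infty_1$ bounds via $x\cdot H_2w=0$ and Gronwall, the same compactness extraction of $X^{n_k}\to X$, $X^{n_k+1}\to\tilde X$, and the same distance function $D_n(t)$ with the coupled inequality $D_n(t)\lesssim\int_0^t\bigl[F(D_n(s))+F(D_{n-1}(s))\bigr]\,ds$ closed by a limsup (the paper uses $\sup_{m\ge n}D_m$) and Osgood's lemma. The only cosmetic caveat is that $D_n\to 0$ gives $X=\tilde X$ directly from the subsequential limits rather than via a genuine Cauchy property of $\{X^n\}$, but this does not affect the argument.
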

\begin{remark}
By the same argument, one can show that for all $T>0$, $N \in \mathbb{N}^*$ and $t \in [0,T]$, there exist some constant $C$ depending only on $N,T$ and $\|w_0\|_{L^1_N \bigcap L^{\infty}_N(\R^2)}$ such that
\begin{equation*}
\sup_{t\in[0,T]}\|w(\cdot,t)\|_{L^1_N \bigcap L^{\infty}_N(\R^2)} \le C_{N,T,w_0}.
\end{equation*}
\end{remark}
The proof of Theorem \ref{thm existence of smooth solution} is standard, which is sketched in Appendix B.

Next we prove Theorem \ref{thm conservation}. By multiplying both side of \eqref{eq 3euler} by $(-\Delta_{\R^2\times \T})^{-1}$ and integrating, we see directly that
$$
E(t):=\int_{\R^2\times \T} (-\Delta_{\R^2\times \T})^{-1}\Omega \cdot \Omega \,dx
$$
is conserved. For the second momentum, we take derivatives on $M_1(t)$ directly.
\begin{equation*}\begin{aligned}
\frac{dM_1(t)}{dt}&=-\int_{\R^2\times \T} |x'|^2 \nabla \cdot (U\Omega^z) \,dx \\
&=2\int_{\R^2\times \T} (x',0)\cdot U(x) \Omega^z(x)\,dx\\
&=2\int_{\R^2\times \T} \int_{\R^2\times \T} \mathfrak{g}(x,y) \Omega^z(y) \Omega^z(x)\,dy\,dx,
\end{aligned}\end{equation*}
where (recall that $\mathbf{G}$ is the Green's function defined in Proposition \ref{prop stream function})
\begin{equation*}\begin{aligned}
\mathfrak{g}(x,y):=&(x',0) \cdot \nabla \mathbf{G}(x-y) \wedge \xi(y)\\
=&\Big(x_1\partial_2\mathbf{G}(x-y)-x_2\partial_1\mathbf{G}(x-y)\Big) +(x_1y_1+x_2y_2)\partial_3\mathbf{G}(x-y).
\end{aligned}\end{equation*}
Therefore, it follows from \eqref{eq sec2 C3} and \eqref{eq sec2 C4} that
$$
\mathfrak{g}(x,y)+\mathfrak{g}(y,x) \equiv 0
$$
and hence
\begin{equation*}\begin{aligned}
\frac{dM_1(t)}{dt}=\int_{\R^2\times \T} \int_{\R^2\times \T} (\mathfrak{g}(x,y)+\mathfrak{g}(x,y)) \Omega^z(y) \Omega^z(x)\,dy\,dx=0.
\end{aligned}\end{equation*}
 Finally, for general solutions $\Omega^z \in L^1_2\bigcap L^{\infty}_2(\R^2\times \T)$, the conservation law can be obtained by using a similar approximation argument as mentioned above, which we omit here for brevity.

\subsection{Proof of the existence.}In this subsection, we will show that for every $w_0 \in L^1\bigcap L^{\infty}(\R^2)$, there exists a global Lagrangian weak solution to \eqref{eq 2euler} with initial data $w_0$.

    \begin{proof}[Proof of Theorem \ref{thm main} (i)]Let $\{w_{0,n}\} \in C_c^{\infty}(\R^2)$  such that $w_{0,n} \to w_0 \in L^1(\R^2)$ and $\|w_{0,n}\|_{L^{\infty}(\R^2)} \lesssim \|w_0\|_{L^{\infty}(\R^2)}$.  Let $w_n(\cdot,t)$ be a sequence of Lagrangian solutions to \eqref{eq 2euler} with initial data $w_{0,n}$. Then the fact $\nabla \cdot Hw_n=0$ yields
\begin{equation*}
\|w_n(\cdot,t)\|_{L^1 \bigcap L^\infty(\R^2)}=\|w_{0,n}\|_{L^1 \bigcap L^{\infty}(\R^2)} \lesssim 1,
\end{equation*}
which implies that (using Proposition \ref{prop boundness of H(x,y)})
\begin{equation*}
|H_1w_n(x,t)| \lesssim \la x\ra \quad \text{and} \quad |H_2w_n(x,t)| \lesssim \la x\ra^2.
\end{equation*}
Then by \eqref{eq hw(x)-hw(z) 1}, \eqref{eq hw(x)-hw(z) 2} and \eqref{eq log-lip in R^3}, we conclude that
\begin{equation*}
|Hw_n(x,t)-Hw_n(z,t)| \lesssim (\la x\ra^3+\la z\ra^3)F(|x-z|).
\end{equation*}
Thus, it follows from Lemma \ref{le app1} that for any $R, T>0$, $X_n(\alpha,t)$ and $X_n^{-t}(\alpha)$ are uniformly bounded and equicontinuous on $B_R \times [0,T]$. So there exist $X(\alpha,t)$ and its inverse map $X^{-t}(\alpha)$ and a subsequence $X_{n_k}$ such that
\begin{equation*}\begin{aligned}
X_{n_k}(\alpha,t) \to X(\alpha,t)
\end{aligned}\end{equation*}
and
\begin{equation*}\begin{aligned}
X_{n_k}^{-t}(\alpha) \to X^{-t}(\alpha)
\end{aligned}\end{equation*}
uniformly in every compact set. Furthermore, for any $t\ge 0$, the map $X^{-t}(\cdot)$ and $X(\cdot,t) $ preserves Lebesgue measure. Now setting $w(x,t)=w_0(X^{-t}(x))$, we will show that $w(x,t)$ is the desired solution.
To this end, first we write the subsequence $n_k $ as $n$ for simplicity and we claim that for every $t \in [0,T]$,
\begin{equation}\label{eq sec6 A0}
\|w(\cdot,t)-w_n(\cdot,t)\|_{L^1} \to 0.
\end{equation}
Indeed, since $w_n(\cdot)$ is a Lagrangian weak solution, we have
\begin{equation*}\begin{aligned}
\|w(\cdot,t)-w_n(\cdot,t)\|_{L^1}
&=\int_{\R^2} |w_0(X^{-t}(x))-w_0(X_n^{-t}(x))\,dx.
\end{aligned}\end{equation*}
Now for any $\e >0$, let $w_c \in C_c^{\infty}(\R^2)$ such that
\begin{equation*}
\|w_0-w_c\|_{L^1} \le \frac{\e}{100}.
\end{equation*}
Then it follows that
\begin{equation*}\begin{aligned}
\|w(\cdot,t)-w_n(\cdot,t)\|_{L^1}&\le \int_{\R^2} |w_0(X^{-t}(x))-w_c(X^{-t}(x))|\,dx+\int_{\R^2} |w_c(X^{-t}(x))-w_c(X_n^{-t}(x))|\,dx\\&+\int_{\R^2} |w_c(X_n^{-t}(x))-w_0(X_n^{-t}(x))|\,dx.
\end{aligned}\end{equation*}
Observe that $X^{-t}(\cdot)$ and $X(\cdot,t) $ preserves Lebesgue measure, so there holds
\begin{equation*}
\int_{\R^2} |w_0(X^{-t}(x))-w_c(X^{-t}(x))|\,dx= \|w_0-w_c\|_{L^1} \le \frac{\e}{100}.
\end{equation*}
Similarly,
\begin{equation*}
\int_{\R^2} |w_0(X_n^{-t}(x))-w_c(X_n^{-t}(x))|\,dx= \|w_0-w_c\|_{L^1} \le \frac{\e}{100}.
\end{equation*}
Then the dominating convergence theorem gives
   $$\int_{\R^2} |w_c(X^{-t}(x))-w_c(X_n^{-t}(x))|\,dx \le \frac{97\e}{100}$$
for $n$ large enough since $w_c \in C_c^{\infty}(\R^2)$ and $X_n^{-t}(x) $ converges uniformly to $X^{-t}(x)$ in compact set. This completes the proof of the claim. Furthermore, it follows from \eqref{eq bound for Hw by only L1 norm} that
\begin{equation}\label{eq sec6 A1}
Hw_n(x,t) \to Hw(x,t)
\end{equation}
for all $x \in \R^2$ and $t \in \R$. Note that for any $\phi \in C_c^{\infty}(\R^2)$, we have
\begin{equation*}
\int w_n(x,t)\phi(x,t) \,dx-\int w_n(x,0)\phi(x,0)\,dx=\int_0^t \int_{\R^2} w_n (\partial_t \phi +Hw_n\cdot \nabla \phi) \,dxdt
\end{equation*}
since each $w_n$ is a weak solution. So with the help of \eqref{eq sec6 A0} and \eqref{eq sec6 A1}, letting $n\to \infty$, the desired conclusion is reached.
\end{proof}
\section{Continuous dependence on initial data and conservation laws. }In this section we will prove the continuous dependence on initial data. First, we prove the following simplified version.
\begin{theorem}
Let $w_{0,n}$ be a sequence of initial data such that
\begin{equation*}
\sup_n \|w_{0,n}\|_{L^{\infty}_1(\R^2)} < \infty
\end{equation*}
and
\begin{equation*}
\|w_{0,n}-w_0\|_{L^1_1(\R^2)} \to 0.
\end{equation*}
Then for any $t \ge 0$, one has
\begin{equation*}
 \|w_n(t)-w(t)\|_{L^1_1(\R^2)} \to 0
\end{equation*}
as $n \to \infty$.
\end{theorem}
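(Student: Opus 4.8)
The plan is to run the compactness scheme from the proof of Theorem~\ref{thm main}(i): extract a locally uniformly convergent subsequence of the particle trajectory maps $X_n$ associated with $Hw_n$, use the vortex transport formula to upgrade this to $L^1_1$-convergence of the $w_n$, and then pin down the limit via the uniqueness result Corollary~\ref{co uniqueness}. Since the conclusion concerns the whole sequence, I would run everything through the standard device of showing that every subsequence of $\{w_n\}$ has a further subsequence converging to the \emph{same} limit $w(t)$ in $L^1_1(\R^2)$; full convergence then follows automatically. Fix $T>0$ throughout.

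First I would record the uniform bounds. The hypotheses $\|w_{0,n}-w_0\|_{L^1_1}\to0$ and $\sup_n\|w_{0,n}\|_{L^\infty_1}<\infty$ give $\sup_n\|w_{0,n}\|_{L^1_1\bigcap L^\infty_1}<\infty$, and the same Gronwall argument as in the proof of Theorem~\ref{thm existence of smooth solution} (applied to each $w_n$, which is Lagrangian and has divergence-free velocity) propagates this to $\sup_n\sup_{t\in[0,T]}\|w_n(\cdot,t)\|_{L^1_1\bigcap L^\infty_1}\le C_T$. Consequently Proposition~\ref{prop boundness of H(x,y)} and \eqref{eq log-lip in R^3} yield, uniformly in $n$, the local bound $|Hw_n(x,t)-Hw_n(z,t)|\lesssim(\la x\ra^3+\la z\ra^3)F(|x-z|)$, so Lemma~\ref{le app1} makes $X_n$ and $X_n^{-t}$ uniformly bounded and equicontinuous on $B_R\times[0,T]$. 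I would then pass to a subsequence with $X_{n_k}\to\bar X$ and $X_{n_k}^{-t}\to\bar X^{-t}$ locally uniformly, $\bar X^{-t}$ measure preserving, and set $\bar w(x,t):=w_0(\bar X^{-t}(x))$. A key uniform fact here is the weight equivalence $\la X_n(\alpha,t)\ra\approx\la\alpha\ra$ on $[0,T]$: since $H_2w_n$ is purely rotational, $x\cdot H_2w_n(x)=0$, so $\frac{d}{dt}|X_n|^2=2X_n\cdot H_1w_n(X_n)$, and the \emph{linear} growth $|H_1w_n|\lesssim\la x\ra$ from \eqref{eq boundness for H_1w} gives $\la X_n(\alpha,t)\ra\le e^{C_Tt}\la\alpha\ra$ and a matching lower bound.

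Next I would prove $\|w_{n_k}(t)-\bar w(t)\|_{L^1_1}\to0$ using the transport formula $w_{n_k}(x,t)=w_{0,n_k}(X_{n_k}^{-t}(x))$. Splitting
\begin{align*}
\|w_{n_k}(t)-\bar w(t)\|_{L^1_1}
&\le \int_{\R^2}\la x\ra\big|w_{0,n_k}(X_{n_k}^{-t}(x))-w_0(X_{n_k}^{-t}(x))\big|\,dx\\
&\quad+\int_{\R^2}\la x\ra\big|w_0(X_{n_k}^{-t}(x))-w_0(\bar X^{-t}(x))\big|\,dx,
\end{align*}
the first term equals $\int_{\R^2}\la X_{n_k}(y,t)\ra|w_{0,n_k}(y)-w_0(y)|\,dy\lesssim e^{C_Tt}\|w_{0,n_k}-w_0\|_{L^1_1}\to0$ after the measure-preserving change of variables $y=X_{n_k}^{-t}(x)$ and the weight equivalence. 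For the second term I would approximate $w_0$ in $L^1_1$ by $w_c\in C_c^\infty(\R^2)$: the contribution of $w_0-w_c$ is again $\lesssim e^{C_Tt}\|w_0-w_c\|_{L^1_1}$, while for $w_c$ the integrand is supported in a fixed compact set (the $n_k$-uniform image of $\mathrm{supp}\,w_c$ under the flows), where $\la x\ra$ is bounded, so dominated convergence together with $X_{n_k}^{-t}\to\bar X^{-t}$ locally uniformly and the continuity of $w_c$ sends it to $0$.

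Finally I would identify $\bar w=w$ by passing to the limit in the weak formulation for $w_{n_k}$. The $L^1$-convergence just obtained, the uniform $L^1\bigcap L^\infty$ bound, the pointwise convergence $Hw_{n_k}(x,t)\to H\bar w(x,t)$ (justified exactly as in Theorem~\ref{thm main}(i) through \eqref{eq bound for Hw by only L1 norm}) and the local bound $|Hw_{n_k}(x,t)|\lesssim_R1$ together let me pass to the limit termwise and conclude that $\bar w\in L^\infty([0,T],L^1_1\bigcap L^\infty_1)$ is a weak solution of \eqref{eq 2euler} with initial datum $w_0$; Corollary~\ref{co uniqueness} then forces $\bar w=w$, and the subsequence argument closes the proof. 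I expect this last identification to be the main obstacle: $H$ is nonlocal with a singular, linearly growing kernel, so the passage to the limit in the nonlinear product $w_{n_k}Hw_{n_k}$ must be handled carefully, relying on the uniform local bounds for $Hw_{n_k}$ and on the fact that the $w_{n_k}$ converge in $L^1$ (not merely weakly-$*$) on the support of each test function.
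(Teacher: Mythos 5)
Your proposal is correct and follows essentially the same route as the paper: uniform bounds plus Lemma~\ref{le app1} to extract locally uniformly convergent trajectory maps, the transport formula with the weight equivalence $\la X_n(\alpha,t)\ra\approx\la\alpha\ra$, measure preservation, and approximation by $C_c^\infty$ data to get the $L^1_1$ estimate, and finally passage to the limit in the weak formulation together with Corollary~\ref{co uniqueness} to identify the limit. The only difference is packaging: the paper first upgrades the trajectory convergence to the full sequence by a contradiction argument (which internally is the same compactness-plus-uniqueness device) and then runs the $L^1_1$ estimate once, whereas you run the estimate along sub-subsequences and invoke the subsequence principle at the level of $\|w_n(t)-w(t)\|_{L^1_1}$.
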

\begin{proof}
First we claim that for any $R,T>0$, $X_n(\cdot,\cdot) \to X(\cdot,\cdot)$ uniformly in $B_R \times [0,T]$. Assume this is not true, then there exist $R,T,\delta>0$, $(\alpha_k,t_k) \in B_R \times [0,T]$ and a subsequence $X_{n_k}$ such that
\begin{equation*}
|X_{n_k}(\alpha_k,t_k)-X(\alpha_k,t_k)| \ge \delta>0.
\end{equation*}
Observe that $B_R \times [0,T]$ is compact, so we may assume without loss of generality that $(\alpha_k,t_k) \to (\alpha_0,t_0) \in B_R\times [0,T]$. Then by a similar argument as in Section $6$, there exists $\tilde{X}(\alpha,t)$ such that $X_{n_k} \to \tilde X$ uniformly in compact set. Thus, if we set $w(\tilde{X}(\alpha,t),t):=w_0(\alpha)$, then $\tilde{w}$ is also a weak solution to \eqref{eq 2euler} with initial data $w_0$. Meanwhile, from Corollary \ref{co uniqueness} we know that solutions to \eqref{eq 2euler} is unique in $L^1_1\bigcap L^{\infty}_1(\R^2)$, so $w=\tilde{w}$ and hence $X= \tilde{X}$. Thus,
\begin{equation*}
X(\alpha_0,t_0)-\tilde{X}(\alpha_0,t_0)=0,
\end{equation*}
which leads to a contradiction since
\begin{equation*}
|X(\alpha_0,t_0)-\tilde{X}(\alpha_0,t_0)|=\lim_{k\to \infty}|X(\alpha_k,t_k)-X_{n_k}(\alpha_k,t_k)| \ge \delta>0.
\end{equation*}
Therefore, $X_n \to X$ uniformly in compact set. \\Next we show $w_n \to w$ in $L^1_1(\R^2)$.  A direct calculation shows that
\begin{align}
\|w(\cdot,t)-w_n(\cdot,t)\|_{L^1_1(\R^2)}
=&\int_{\R^2} \la x\ra |w_{0,n}(X_n^{-t}(x))-w_0(X^{-t}(x))|\,dx \nonumber\\
\le& \int_{\R^2} \la x\ra|w_{0,n}(X_n^{-t}(x))-w_0(X_n^{-t}(x))|\,dx \label{eq sec7 A0}\\&+\int_{\R^2} \la x\ra|w_0(X_n^{-t}(x))-w_0(X^{-t}(x))|\,dx. \label{eq sec7 A1}
\end{align}
For \eqref{eq sec7 A0}, we use \eqref{eq sec4 A0} to conclude that for $n$ large enough
\begin{equation*}\begin{aligned}
\int_{\R^2} \la x\ra|w_{0,n}(X_n^{-t}(x))-w_0(X_n^{-t}(x))|\,dx&=\int_{\R^2} \la X_n(\alpha,t)\ra|w_{0,n}(\alpha)-w_0(\alpha)|\,d\alpha \\
&\lesssim \int_{\R^2} \la\alpha\ra |w_{0,n}(\alpha)-w_0(\alpha)|\,d\alpha \\
&\le \frac{\e}{100}
\end{aligned}\end{equation*}
since $w_{0,n}\to w_0$ in $L^1_1(\R^2)$.
For \eqref{eq sec7 A1}, we choose $w_{0,\e} \in C_c^{\infty}(\R^2)$ such that $\|w_{0,\e}-w_0\|_{L^1_1(\R^2)} \le \delta \e$ for some $\delta$ small enough. Then it follows that
\begin{align}
\int_{\R^2} \la x\ra|w_0(X_n^{-t}(x))-w_0(X^{-t}(x))|\,dx&\le \int_{\R^2} \la x\ra|w_0(X_n^{-t}(x))-w_{0,\e}(X_n^{-t}(x))|\,dx \label{eq sec7 A2}\\ &+\int_{\R^2} \la x\ra|w_{0,\e}(X_n^{-t}(x))-w_{0,\e}(X^{-t}(x))|\,dx \label{eq sec7 A3}\\ &+\int_{\R^2} \la x\ra|w_{0,\e}(X^{-t}(x))-w_{0}(X^{-t}(x))|\,dx\label{eq sec7 A4}.
\end{align}
Recall that $X^{-t}(\cdot)$ and $X_n^{-t}(\cdot)$ preserve Lebesgue measure, so in view of \eqref{eq sec7 A0}, the integral in \eqref{eq sec7 A2} and \eqref{eq sec7 A4} can be bounded by $\frac{\e}{100}$ once $\delta$ is taken small enough.
For \eqref{eq sec7 A3}, dominating convergence theorem gives $$\int_{\R^2} \la x\ra|w_{0,\e}(X_n^{-t}(x))-w_{0,\e}(X^{-t}(x))|\,dx \le \frac{\e}{100}$$ for $n$ large enough and hence the proof of the theorem is complete.
\end{proof}
Next, we prove Theorem \ref{thm main} (iii). That is, for any $T>0$, there holds
\begin{equation*}
\sup_{t\le T} \|w_n(t)-w(t)\|_{L^1_1(\R^2)} \to 0
\end{equation*}
as $n \to \infty$.
\begin{proof}
Assume this is not true, then there exists a subsequence of $w_n$(which we still denote by $w_n$), $t_n\in [0,T]$ and $\delta>0$ such that
\begin{equation*}
\|w_n(t_n)-w(t_n)\|_{L^1_1(\R^2)} >\delta.
\end{equation*}
We may assume without loss of generality that $t_n \to t_0 \in [0,T]$ as $n \to \infty$ since $[0,T]$ is compact. Then we estimate
\begin{align}
\|w_n(t_n)-w(t_n)\|_{L^1_1(\R^2)} =&\int_{\R^2} \la x\ra|w_n(x,t_n)-w(x,t_n)|\,dx \nonumber\\
=& \int_{\R^2} \la x\ra |w_{0,n}(X_n^{-t_n}(x))-w_0(X^{-t_n}(x))|\,dx \nonumber\\
\le& \int_{\R^2} \la x\ra |w_{0,n}(X_n^{-t_n}(x))-w_0(X_n^{-t_n}(x))|\,dx \label{eq sec7 A5}\\ &+ \int_{\R^2} \la x\ra |w_0(X_n^{-t_n}(x))-w_0(X^{-t_n}(x))|\,dx\label{eq sec7 A6}.
\end{align}
For \eqref{eq sec7 A5}, in view of \eqref{eq sec4 A0}, we obtain
\begin{equation*}\begin{aligned}
\int_{\R^2} \la x\ra |w_{0,n}(X_n^{-t_n}(x))-w_0(X_n^{-t_n}(x))|\,dx &= \int_{\R^2} \la X_n(\alpha,t_n)\ra|w_{0,n}(\alpha)-w_0(\alpha)|\,d\alpha \\
&\lesssim \int_{\R^2} \la \alpha\ra|w_{0,n}(\alpha)-w_0(\alpha)|\,d\alpha \to 0.
\end{aligned}\end{equation*}
To bound the term in \eqref{eq sec7 A6}, we choose $w_{\e} \in C_c^{\infty}(\R^2)$ such that $\|w_0-w_{\e}\|_{L^1_1(\R^2)} \le \frac{\delta}{100}$. Then by a similar argument as in \eqref{eq sec7 A2}, \eqref{eq sec7 A3} and \eqref{eq sec7 A4}, there holds
\begin{equation*}\begin{aligned}
\int_{\R^2} \la x\ra |w_0(X_n^{-t_n}(x))-w_0(X^{-t_n}(x))|\,dx \le& \int_{\R^2} \la x\ra |w_0(X_n^{-t_n}(x))-w_{\e}(X_n^{-t_n}(x))|\,dx \\&+\int_{\R^2} \la x\ra |w_{\e}(X_n^{-t_n}(x))-w_{\e}(X^{-t_n}(x))|\,dx \\&+\int_{\R^2} \la x\ra |w_{\e}(X^{-t_n}(x))-w_0(X^{-t_n}(x))|\,dx \\
\le& \frac{3\delta}{100}
\end{aligned}\end{equation*}
for $n$ large enough, which contradicts the fact that
\begin{equation*}
\|w_n(t_n)-w(t_n)\|_{L^1_1(\R^2)} > \delta.
\end{equation*}
Thus the proof of Theorem \ref{thm main} (iii) is complete.

\end{proof}

\appendix
\section{}
\begin{proposition}
 For any $w \in L^1 \bigcap L^{\infty}(\R^2)$, it holds that $\nabla \cdot Hw=0$ in the sense of distribution, where
\begin{equation*}
Hw(x):=\int H(x,y)w(y)\,dy
\end{equation*}
and the kernel $H(x,y)$ is given in \eqref{eq def of H}.
\end{proposition}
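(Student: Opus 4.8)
The plan is to prove the identity by smooth approximation, transferring the classical incompressibility already established in Proposition \ref{prop 3 euler} to the limit. First recall what must be shown: that
$$
\int_{\R^2} Hw(x)\cdot \nabla\phi(x)\,dx = 0 \qquad \text{for every } \phi \in C_c^\infty(\R^2).
$$
By Proposition \ref{prop boundness of H(x,y)} the field $Hw$ is locally bounded (indeed $|H_2w(x)|\lesssim \langle x\rangle^2\|w\|_{L^1\cap L^\infty}$), so $Hw\in L^1_{loc}(\R^2)$ and the left-hand side is well defined.

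First I would fix a smooth approximating sequence: choose $w_n\in C_c^\infty(\R^2)$ with $w_n\to w$ in $L^1(\R^2)$ and $\|w_n\|_{L^\infty}\lesssim\|w\|_{L^\infty}$ (mollify and truncate). For each such $w_n$ the computation carried out in the proof of Proposition \ref{prop 3 euler} applies verbatim: extending $w_n$ helically to $\Omega^z_n(x',x_3)=w_n(R_{-x_3}x')$ and setting $\Omega_n=\xi\,\Omega^z_n$, the associated Biot--Savart velocity $U_n$ is divergence free, and the helical relation \eqref{eq sec2 A0} then forces $\nabla\cdot Hw_n=0$ pointwise. Note that this incompressibility is an identity for the kernel and does not use that $w_n$ solves the equation. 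Since $w_n$ has compact support, $\Omega^z_n$ is supported in a fixed solid cylinder, so $U_n$ and hence $Hw_n$ are smooth; integrating by parts against $\phi$ gives
$$
\int_{\R^2} Hw_n(x)\cdot\nabla\phi(x)\,dx = -\int_{\R^2}(\nabla\cdot Hw_n)(x)\,\phi(x)\,dx = 0
$$
for every $n$ and every $\phi\in C_c^\infty(\R^2)$.

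It then remains to pass to the limit, for which I would establish that $Hw_n\to Hw$ in $L^1_{loc}(\R^2)$. Fix $R>0$ with $\operatorname{supp}\phi\subset B_R$. Using the kernel bounds \eqref{eq H_1} and \eqref{eq H_2}, which together give $|H(x,y)|\lesssim\langle x\rangle^2(1+|x-y|^{-1})$, Tonelli's theorem yields
$$
\int_{B_R}|Hw_n(x)-Hw(x)|\,dx \le \int_{\R^2}\Big(\int_{B_R}|H(x,y)|\,dx\Big)|w_n(y)-w(y)|\,dy.
$$
The crucial point is the uniform-in-$y$ bound $\int_{B_R}|H(x,y)|\,dx\lesssim_R 1$: for $x\in B_R$ the growth factor $\langle x\rangle^2$ is controlled by $\langle R\rangle^2$, and the only remaining contribution is $\int_{B_R}|x-y|^{-1}\,dx$, a two-dimensional integral of a locally integrable singularity over a bounded set, which is bounded independently of $y$. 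Consequently $\int_{B_R}|Hw_n-Hw|\,dx\lesssim_R\|w_n-w\|_{L^1}\to 0$.

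Combining the two steps, since $\nabla\phi\in L^\infty$ with support in $B_R$ and $Hw_n\to Hw$ in $L^1(B_R)$, I may let $n\to\infty$ in the identity $\int Hw_n\cdot\nabla\phi\,dx=0$ to obtain $\int Hw\cdot\nabla\phi\,dx=0$, which is precisely $\nabla\cdot Hw=0$ in the sense of distributions. I expect the main (and essentially only) technical obstacle to be the verification of the uniform kernel bound $\int_{B_R}|H(x,y)|\,dx\lesssim_R 1$; once this is in hand, the remainder is a routine transfer of a pointwise identity through an $L^1_{loc}$ limit.
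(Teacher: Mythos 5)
Your proposal is correct and follows essentially the same route as the paper: approximate $w$ by $w_n \in C_c^{\infty}(\R^2)$ with $\|w_n\|_{L^{\infty}}$ bounded and $w_n \to w$ in $L^1$, invoke the Section 2 computation (which, as you rightly note, is an identity for the kernel and does not require $w_n$ to solve the equation) to get $\nabla \cdot Hw_n = 0$, and pass to the limit against test functions. The only difference is the mode of convergence in the limit passage: the paper derives locally uniform convergence $Hw_n \to Hw$ from the pointwise interpolation bound \eqref{eq bound for Hw by only L1 norm}, whereas you derive $L^1_{loc}$ convergence via Tonelli and the uniform-in-$y$ bound $\int_{B_R}|H(x,y)|\,dx \lesssim_R 1$; both steps are routine and equally valid.
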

\begin{proof}
Assume $w \in C_c^{\infty}(\R^2)$, then $\nabla \cdot Hw=0$ by a direct calculation as shown in Section 2. Now for general $w \in L^1 \bigcap L^{\infty}(\R^2)$, we choose a sequence of $w_{\e} \in  C_c^{\infty}(\R^2)$ such that $w_{\e} \to w$ in $L^1$ and
$|w_{\e}(x)|\lesssim 1$. Then for any $R>0$, it follows from Corollary \ref{co mei 1111} that
\begin{equation*}\begin{aligned}
|Hw(x)| &\lesssim \la x\ra^2 \int \left(1+\frac{1}{|x-y|}\right) |w(y)|\,dy \\
&\lesssim \la x\ra^2 \left( \|w\|_{L^1}+\int_{|x-y|>R} \frac{|w(y)|}{|x-y|} \,dy+\int_{|x-y|\le R} \frac{|w(y)|}{|x-y|} \,dy \right) \\
&\lesssim \la x\ra^2 \left( \|w\|_{L^1}+\frac{\|w\|_{L^1}}{R}+R\|w\|_{L^{\infty}} \right),
\end{aligned}\end{equation*}
which implies
\begin{equation}\label{eq bound for Hw by only L1 norm}
|Hw(x)|\lesssim \la x\ra^2\left(\|w\|_{L^1}+ \|w\|_{L^1}^{1/2}\|w\|_{L^{\infty}}^{1/2}\right)
\end{equation}
once we take $R=\frac{\|w\|_{L^1}^{1/2}}{\|w\|_{L^{\infty}}^{1/2}}$. Therefore, $Hw_{\e}(x) \to Hw(x)$ uniformly in every compact set since $H$ is a linear operator. Hence for any $\phi \in C_c^{\infty}(\R^2)$, we have
\begin{equation*}\begin{aligned}
\int Hw(x) \cdot \nabla \phi(x)\,dx = \lim_{\e \to 0} \int Hw_{\e}(x) \cdot \nabla \phi(x)\,dx=0.
\end{aligned}\end{equation*}
\end{proof}
Next, we will show that the particle trajectory map of $Hw$ preserves the Lebesgue measure. Before the proof, we need the following technical lemma.
\begin{lemma}\label{le app1}
Let $\{U_n(x,t)\}_{n=1}^{\infty}$ be a sequence of vector fields in $\R^d$ satisfying
\begin{enumerate}
\item $\left|U_n(x,t) \cdot \frac{x}{|x|} \right| \lesssim \la x \ra$.
\item $\sup_{x \in B_R} \left| U_n(x,t) \right| \lesssim_R 1 $.
\item $\sup_{|x|,|z|\le R}|U_n(x,t)-U_n(z,t)|\lesssim_R F(|x-z|)$.
\end{enumerate}
Let $X_{n}$ be the particle trajectory map of $U_n$, then for any $R,T >0$, $X_n$ is uniformly bounded and equicontinuous in $B_R \times [0,T]$. Similarly, let $X_n^{-t}(\cdot)$ be the inverse map of $X_n(\cdot,t)$, then $X_n^{-t}(x)$ is uniformly bounded and equicontinuous in $B_R \times [0,T]$ for any $R,T >0$.
\end{lemma}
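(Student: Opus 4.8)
The plan is to establish uniform boundedness and equicontinuity for the forward flow $X_n$ first, and then transfer both properties to the inverse flow $X_n^{-t}$ by viewing it as a backward trajectory. The three hypotheses play distinct roles: \emph{(i)} controls radial growth and hence the size of trajectories, \emph{(ii)} yields a uniform-in-$n$ Lipschitz bound in time, and \emph{(iii)} is the Osgood modulus that produces a uniform spatial modulus of continuity. Note also that \emph{(iii)} guarantees uniqueness of the ODE, so each $X_n(\cdot,t)$ is a bijection and $X_n^{-t}$ is well defined.

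First I would prove uniform boundedness. Fix $\alpha \in B_R$ and set $r_n(t)=|X_n(\alpha,t)|$. Differentiating and using \emph{(i)},
\[
\left|\tfrac{d}{dt} r_n(t)\right| = \left|\tfrac{X_n(\alpha,t)}{|X_n(\alpha,t)|}\cdot U_n(X_n(\alpha,t),t)\right| \lesssim \la X_n(\alpha,t)\ra \le 1+r_n(t).
\]
Gronwall's inequality gives $1+r_n(t)\le (1+R)e^{Ct}$, so there is $R'=R'(R,T)$, independent of $n$, with $X_n(\alpha,t)\in B_{R'}$ for all $\alpha\in B_R$, $t\in[0,T]$ and $n$. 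This is the uniform bound, and it confines the trajectories to a fixed ball where hypotheses \emph{(ii)} and \emph{(iii)} apply with constant $R'$.

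Next comes spatial equicontinuity, the conceptual core. For $\alpha,\beta\in B_R$ set $d_n(t)=|X_n(\alpha,t)-X_n(\beta,t)|$. Since both trajectories lie in $B_{R'}$, hypothesis \emph{(iii)} gives $|\tfrac{d}{dt}d_n|\le C_{R'}F(d_n)$ with $d_n(0)=|\alpha-\beta|$. Because $\int_0^{1/e}dr/F(r)=\infty$, Osgood's comparison bounds $d_n(t)$ by the solution $y(t)$ of $y'=C_{R'}F(y)$, $y(0)=|\alpha-\beta|$, which one computes explicitly: while $y<1/e$ the substitution $u=1-\log y$ linearizes the equation and yields $y(t)=\exp\!\big(1-(1-\log|\alpha-\beta|)e^{-C_{R'}t}\big)$, i.e. a Hölder-type modulus $\omega(h)\approx h^{\,e^{-C_{R'}T}}$. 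As $|\alpha-\beta|\to0$ this tends to $0$ uniformly for $t\in[0,T]$ and all $n$, giving a single modulus of continuity $\omega$. Combined with the time bound $|X_n(\alpha,t)-X_n(\alpha,s)|\le \sup_{B_{R'}}|U_n|\,|t-s|\lesssim_{R'}|t-s|$ from \emph{(ii)}, the triangle inequality furnishes a joint modulus on $B_R\times[0,T]$ independent of $n$, hence equicontinuity of $X_n$.

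Finally I would treat the inverse map. Writing $Y_n(x,s;t)$ for the trajectory of $U_n$ with $Y_n(x,t;t)=x$, one has $X_n^{-t}(x)=Y_n(x,0;t)$, so the same arguments run backward in time: \emph{(i)} again gives $1+|Y_n(x,s;t)|\le(1+|x|)e^{C|t-s|}$, a uniform bound, and \emph{(iii)} gives the same modulus $\omega$ for $x\mapsto X_n^{-t}(x)$, since the Osgood comparison is insensitive to the direction of time. The delicate point, which I expect to be the main obstacle, is equicontinuity of $X_n^{-t}(x)$ in the terminal time $t$: one cannot cleanly differentiate $X_n^{-t}$ in $t$ without controlling the Jacobian of the flow. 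I would argue geometrically instead. For $t'<t$, put $\alpha=X_n^{-t}(x)$ and $z=X_n(\alpha,t')$, so that $|z-x|=|X_n(\alpha,t')-X_n(\alpha,t)|\lesssim_{R'}|t-t'|$ by the forward time bound (valid since $\alpha\in B_{R'}$), while $X_n^{-t'}(z)=\alpha=X_n^{-t}(x)$. Hence
\[
|X_n^{-t}(x)-X_n^{-t'}(x)|=|X_n^{-t'}(z)-X_n^{-t'}(x)|\le \omega(|z-x|)\le \omega(C_{R'}|t-t'|),
\]
converting a difference in $t$ into a spatial difference to which the already established modulus $\omega$ applies. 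Combining the spatial and temporal moduli gives joint equicontinuity of $X_n^{-t}$ on $B_R\times[0,T]$, completing the proof.
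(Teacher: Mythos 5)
Your proof is correct and follows essentially the same route as the paper: Gronwall via hypothesis \emph{(i)} for the uniform bound, Osgood comparison with $F$ (yielding the H\"older-type modulus $|\alpha-\beta|^{e^{-CT}}$, which is exactly the paper's bound $z(t)\le e^{e^{-Ct}\log z(0)}$) for spatial equicontinuity, and hypothesis \emph{(ii)} for the Lipschitz-in-time estimate. The only difference is that the paper dispenses with the inverse map by remarking that the estimates are ``similar'' (citing \cite{MaB}), whereas you supply an actual argument there; in particular your geometric conversion of temporal differences into spatial ones, $|X_n^{-t}(x)-X_n^{-t'}(x)|=|X_n^{-t'}(z)-X_n^{-t'}(x)|\le \omega\left(C_{R'}|t-t'|\right)$ with $z=X_n(X_n^{-t}(x),t')$, is a clean way to handle the one point that is genuinely not symmetric under time reversal.
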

\begin{proof} For any $|\alpha| \le R$ and $t \le T$, it is easy to check that
\begin{equation*}\begin{aligned}
|X_n(\alpha,t)|&=|\alpha|+\int_0^t \frac{X_n(\alpha,s)}{|X_n(\alpha,s)|}\cdot U_n(X_n(\alpha,s),s)\,ds \\
&\lesssim |\alpha| +\int_0^t |X_n(\alpha,s)|+1\,ds \\
&\lesssim R+T+\int_0^t |X_n(\alpha,s)|\,ds.
\end{aligned}\end{equation*}
So Gronwall's inequality implies that $X_{n}$ is uniformly bounded in $B_R \times [0,T]$.\\ In order to show the equicontinuity of $X_n$, we use assumption (iii) to estimate
\begin{equation*}\begin{aligned}
|X_n(\alpha,t)-X_n(\beta,t)|&=\int_0^t \frac{X_n(\alpha,s)-X_n(\beta,s)}{|X_n(\alpha,s)-X_n(\beta,s)|} \left( U_n(X_n(\alpha,s),s)-U_n(X_n(\beta,s),s) \right)\,ds \\
&\lesssim \int_0^t F\left( |X(\alpha,s)-X(\beta,s)| \right)\,ds.
\end{aligned}\end{equation*}
Set $z(t)=|X_n(\alpha,t)-X_n(\beta,t)|$ and assume that $z(t) \le \frac{1}{100}$ for all $t\in [0,T]$, then it follows that
\begin{equation*}
z(t) \le C\int_0^t -z(s)(\log z(s)) \,ds.
\end{equation*}
Thus comparison theorem gives
\begin{equation*}\begin{aligned}
z(t) &\le e^{e^{-Ct}\log(z(0))}\\
&\le e^{e^{-CT}\log(z(0))} \\
&=e^{e^{-CT}\log|\alpha-\beta|}.
\end{aligned}\end{equation*}
So if we take $|\alpha-\beta| \le e^{-10e^{-CT}}$, then $z(t)\le e^{-10} \le \frac{1}{100}$ for all $t \in [0,T]$ and hence
\begin{equation*}
|X_n(\alpha,t)-X_n(\beta,t)| \le e^{e^{-CT}\log|\alpha-\beta|}.
\end{equation*}
Next, we estimate
\begin{equation*}
|X_n(\alpha,t_1)-X_n(\alpha,t_2)|=\left|\int_{t_1}^{t_2} U_n(X_n(\alpha,s),s)\,ds\right|.
\end{equation*}
Recall that $X_n$ is uniformly bounded, so there exists $M>0$ such that $|X_n(\alpha,t)| \le M $ for all $\alpha \in B_R(0)$ and $t \in [0,T]$. Together with the fact that $U_n(\cdot,t)$ is uniformly bounded in $B_M\times [0,T]$, we finally obtain

\begin{equation*}
|X_n(\alpha,t_1)-X_n(\alpha,t_2)| \lesssim \int_{t_1}^{t_2}1 \,ds \lesssim |t_2-t_1|.
\end{equation*}
Therefore, for any $\delta >0$ and $n\in N^*$, there exists $\e_0=\e_0(R,T,\delta)$ such that for all $\alpha,\beta\in B_R$, $t_1,t_2 \in[0,T]$ with \ $|\alpha-\beta|+|t_2-t_1|\le \e_0$, there holds
\begin{equation*}
|X_n(\alpha,t_1)-X_n(\beta,t_2)|\le \delta.
\end{equation*}
The estimates for the inverse map $X_n^{-t}(x)$ are similar, see \cite{MaB} for references.
\end{proof}
Now we can show that the particle trajectory map of $Hw$ preserves the Lebesgue measure. More generally, we prove the following.
\begin{lemma}\label{le app A0}
Let $U(x,t)$ be a velocity field in $\R^d\times [0,T]$ with
\begin{enumerate}
\item $\left|U(x,t) \right| \le C \la x\ra^2$ for some $M>0$.
\item $\left| U(x,t) \cdot \frac{x}{\la x\ra} \right| \le C\la x\ra$.
\item $U$ is locally Log-Lipschitz continuous.
\item $\nabla \cdot U=0$ in the sense of distribution.
\end{enumerate}
Let $X(\alpha,t)$ be the particle trajectory map associated with $U$, then the map $X(\cdot,t): \R^d \to \R^d$ is bijective and preserves Lebesgue measure.
\end{lemma}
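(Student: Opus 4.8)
The plan is to first establish that the flow $X(\cdot,t)$ and a backward flow are well-defined on all of $[0,T]$ and are mutually inverse, and then to reduce the measure-preservation claim to the classical smooth divergence-free case by a mollification argument. Since $U$ is locally Log-Lipschitz continuous it satisfies Osgood's uniqueness criterion, so the ODE $\dot X=U(X,t)$ has a unique solution through each point. Global existence on $[0,T]$ follows from the radial bound (ii): computing $\frac{d}{dt}\la X\ra^2=2X\cdot U(X,t)=2\la X\ra\big(\tfrac{X}{\la X\ra}\cdot U\big)\le 2C\la X\ra^2$ and applying Gronwall gives $\la X(\alpha,t)\ra\le \la\alpha\ra e^{CT}$, so trajectories cannot escape to infinity in finite time. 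Solving the same ODE backward in time from an arbitrary target point $y$ at time $t$ produces $X^{-t}(y)$ with $X(X^{-t}(y),t)=y$, so $X(\cdot,t)$ is a bijection with inverse $X^{-t}$. Since conditions (i)--(iii) imply the hypotheses of Lemma \ref{le app1} (for $|x|\ge 1$ one has $\tfrac{\la x\ra}{|x|}\lesssim 1$, and for $|x|<1$ one has $|U|\lesssim 1$ by (i)), we also record that $X$ and $X^{-t}$ are bounded and equicontinuous on every $B_R\times[0,T]$.

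Next I would introduce a spatial mollifier $\rho_\e$ and set $U_\e(x,t)=\big(U(\cdot,t)*\rho_\e\big)(x)$. Because convolution commutes with the distributional divergence, $\nabla\cdot U_\e=(\nabla\cdot U)*\rho_\e=0$, and $U_\e$ is smooth in $x$. The key observation is that $U_\e$ satisfies (i)--(iii) with constants uniform in $\e\le 1$: the growth bounds are preserved since $\rho_\e$ is supported in $\{|z|\le\e\}$ (so $\la x-z\ra\approx\la x\ra$ there, and the cross term $\tfrac{z}{\la x\ra}\cdot U(x-z)$ is controlled by (i)); and the Log-Lipschitz bound is preserved because $F$ depends only on $|x-y|$, so $|U_\e(x)-U_\e(y)|\le\int|U(x-z)-U(y-z)|\rho_\e(z)\,dz\le CF(|x-y|)$. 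As $U_\e$ is smooth and divergence-free, Liouville's theorem shows its flow $X_\e(\cdot,t)$ has Jacobian determinant identically one, hence preserves Lebesgue measure.

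Finally I would let $\e\to 0$. The uniform bounds just verified allow Lemma \ref{le app1} to be applied to the family $\{U_\e\}$, so $X_\e$ and $X_\e^{-t}$ are uniformly bounded and equicontinuous on every $B_R\times[0,T]$; by Arzel\`a--Ascoli a subsequence converges locally uniformly, and by the Osgood uniqueness of the flow of $U$ (using $U_\e\to U$ locally uniformly, valid since $U$ is continuous) the limit must be $X$, so in fact $X_\e\to X$ and $X_\e^{-t}\to X^{-t}$ locally uniformly along the whole family. For $\phi\in C_c(\R^d)$, the change of variables for the measure-preserving $X_\e$ gives $\int\phi(X_\e(\alpha,t))\,d\alpha=\int\phi(y)\,dy$; the left integrand is supported in $\{\alpha:X_\e(\alpha,t)\in\mathrm{supp}\,\phi\}$, which lies in a fixed compact set uniformly in $\e$ because $X_\e^{-t}$ is uniformly bounded on compacta, so dominated convergence yields $\int\phi(X(\alpha,t))\,d\alpha=\int\phi(y)\,dy$. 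As $\phi$ is arbitrary, $X(\cdot,t)_{\#}\mathrm{Leb}=\mathrm{Leb}$, i.e.\ $X(\cdot,t)$ preserves Lebesgue measure.

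The principal difficulty is the low regularity: $\nabla\cdot U=0$ holds only in the distributional sense and $U\notin C^1$, so Liouville's theorem cannot be invoked directly and one must pass through the smooth flows $X_\e$. The two delicate points are verifying that mollification preserves conditions (i)--(iii) with $\e$-uniform constants (this is exactly what makes the Arzel\`a--Ascoli compactness from Lemma \ref{le app1} independent of $\e$) and confirming that the compact support of $\phi$ confines the relevant trajectories to a fixed compact set, which is what legitimizes the dominated-convergence passage to the limit. The remaining ingredients --- Osgood uniqueness and Arzel\`a--Ascoli --- are routine.
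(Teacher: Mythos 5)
Your proposal is correct and follows the same overall strategy as the paper's proof: mollify $U$ in space, use that the flow of a smooth divergence-free field preserves Lebesgue measure, and pass to the limit via the equicontinuity of Lemma \ref{le app1} and dominated convergence. The genuine difference is in how you verify the radial bound \emph{(ii)} for $U_\e$. You split $\frac{x}{\la x\ra}=\frac{x-z}{\la x\ra}+\frac{z}{\la x\ra}$, so that the main term is controlled by hypothesis \emph{(ii)} for $U$ and the cross term by $\frac{|z|}{\la x\ra}|U(x-z)|\lesssim \e\la x\ra$, giving $\left|U_\e(x)\cdot\frac{x}{\la x\ra}\right|\lesssim \la x\ra$ with constants uniform in $\e\le 1$. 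The paper instead estimates $|U(y)|\left|\frac{x}{\la x\ra}-\frac{y}{\la y\ra}\right|\lesssim \la x\ra^2|x-y|$, which only yields the degraded bound $\la x\ra+\e\la x\ra^2$; to recover trajectory bounds from this, the paper must run a bootstrap argument on $z_\e(t)=\la X_\e(\alpha,t)\ra$ and restrict to $\e\le\e_0(R)$, working on enlarged balls $B_{C_R}$ before extracting a convergent subsequence. Your sharper cross-term estimate eliminates the $\e\la x\ra^2$ error, hence the bootstrap and the $R$-dependent restriction on $\e$, so the compactness step applies uniformly to the whole family at once. A second, smaller difference: since the flow $X$ is given in the statement, you identify every subsequential limit of $X_\e$ with $X$ by Osgood uniqueness (each limit is an integral curve of $U$), which makes the convergence of the full family immediate, whereas the paper constructs $X$ as the limit of a subsequence; both are valid. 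The remaining ingredients (bijectivity via the backward flow, Liouville for $X_\e$, and the dominated-convergence passage using the uniform boundedness of $X_\e^{-t}$ on compacta) match the paper's argument.
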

\begin{proof}
Let $\eta$ be a smooth positive function supported in $B_1$ and $$\int_{\R^d} \eta(x)\,dx=1.$$ Define
\begin{equation*}
\eta_{\e}(x)=\frac{1}{\e^{d}}\eta(\frac{x}{\e})
\end{equation*}
and
\begin{equation*}
U_{\e}(x,t):=\eta_{\e}*U(x,t)=\int_{\R^d} \eta_{\e}(x-y)U(y,t)\,dy.
\end{equation*}
Then it is easy to check that for all $\e \le 1$,
\begin{equation*}
|U_{\e}(x,t)| \lesssim \la x\ra^2.
\end{equation*}
Moreover,
\begin{equation}\label{eq app A0}\begin{aligned}
\left| U_{\e}(x) \cdot \frac{x}{\la x\ra} \right| &= \int_{\R^d} \eta_{\e}(x-y)U(y)\cdot \frac{y}{\la y \ra} \,dy +\int_{\R^d} \eta_{\e}(x-y)U(y)\cdot(\frac{x}{\la x\ra}-\frac{y}{\la y\ra}) \,dy \\
&\lesssim \la x\ra+\la x\ra^2\int_{\R^d}|x-y|\eta_{\e}(x-y)\,dy \\
&\lesssim \la x\ra+\e\la x\ra^2
\end{aligned}\end{equation}
and
\begin{equation*}\begin{aligned}
\sup_{|x|,|z|\le R}|U_{\e}(x,t)-U_{\e}(z,t)|&\lesssim \int_{\R^d} \eta_{\e}(y)| U_{\e}(x-y,t)-U_{\e}(z-y,t)|\,dy \\
&\lesssim \int_{\R^d} \eta_{\e}(y) \sup_{|x|,|z|\le R+1}|U(x,t)-U(z,t)|\,dy \\
&\lesssim_R \int_{\R^d} \eta_{\e}(y) F(|x-z|)\,dy \\
&\lesssim_R F(|x-z|).
\end{aligned}\end{equation*}

Next we estimate the particle trajectory map $X_{\e}(\alpha,t)$ associated with $U_{\e}(x,t)$:
\begin{equation*}
\la X_{\e}(\alpha,t) \ra=\la \alpha\ra+\int_0^t \frac{X_{\e}(\alpha,s)}{\la X_{\e}(\alpha,s)\ra}\cdot U_{\e}(X_{\e}(\alpha,s),s)\,ds.
\end{equation*}
Setting $z_{\e}(t)=\la X_{\e}(\alpha,t)\ra$, then it follows from \eqref{eq app A0} that
\begin{equation*}
z_{\e}(t)\le \la \alpha\ra+C\int_0^t z_{\e}(s)+\e z_{\e}(s)^2\,ds.
\end{equation*}
Assume $z_{\e}(t) \le D_1e^{D_2t}$ for all $t\in [0,T]$ and $\la \alpha\ra \le R$, then the above inequality implies that
\begin{equation*}\begin{aligned}
z_{\e}(t) &\le \la \alpha\ra+\frac{CD_1}{D_2}(e^{D_2t}-1)+\frac{\e CD_1^2}{2D_2}(e^{2D_2t}-1) \\
&\le e^{D_2t} \left[ R+1+\frac{CD_1}{D_2}+\frac{\e CD_1^2}{2D_2}e^{D_2T} \right]. \\
\end{aligned}\end{equation*}
Thus, if we take $D_2=100(\la C\ra+\la R\ra)$ and $D_1=100R+100$. Then there exists $\e_0=\e_0(R)$ such that
\begin{equation*}
z(t) \le \frac{D_1e^{D_2t}}{2}
\end{equation*}
for all $\e \le \e_0$. Therefore, a bootstrap argument then gives
\begin{equation*}
\la X_{\e}(\alpha,t)\ra \lesssim \la\alpha\ra
\end{equation*}
for all $\e \le \e_0(R)$ and $\la\alpha\ra \le R$.
Furthermore, by a similar argument as in Lemma \ref{le app1}, we see that for all $R>0$, there exists a positive constant $C_R$ such that for all $\e \le \e_0(R)$, $X_{\e}(\alpha,t)$ and $X_{\e}^{-t}(\alpha)$ are uniformly bounded and equicontinuous in $B_R \times [0,T] $ with
$|X(\alpha,t)|,|X^{-t}(\alpha)| \le C_R$. Now we take $\e \le \e_0(C_R)$ and choose subsequence $X_n(\alpha,t)$ and $X_n^{-t}(\alpha)$ converge uniformly to $X(\alpha,t)$ and $X^{-t}(\alpha)$ in $B_{C_R} \times [0,T]$, respectively.

Note that
\begin{equation*}
X_n(\alpha,t)=\int_0^t U_n(X_n(\alpha,s),s)\,ds,
\end{equation*}
Taking $n\to \infty$, we get
\begin{equation*}
X(\alpha,t)=\int_0^t U(X(\alpha,s),s)\,ds,
\end{equation*}
which implies that $X$ is the particle trajectory map of $U$ for $\la\alpha\ra \le R$. Moreover,
\begin{equation*}
X(X^{-t}(\alpha),t)=\lim_n X_n(X_n^{-t}(\alpha),t)=\alpha
\end{equation*}
for all $\la \alpha\ra \le R$. Thus, $X^{-t}$ is the inverse map of $X(\cdot,t)$ at least for $\la\alpha\ra\le R$. Now it remains to show that $X(\cdot,t)$ and $X^{-t}(\cdot)$ preserves the Lebesgue measure. Recall that $\nabla \cdot U=0$ in the sense of distribution, so $U_{\e}$ is a smooth velocity field with $\nabla \cdot U_\e=0$ and hence $X_{\e}(\cdot,t)$ and $X_{\e}^{-t}(\cdot)$ preserve Lebesgue measure. Thus the dominating convergence theorem yields
\begin{equation*}
m(X(O,t))=m(O)=m(X^{-t}(O))
\end{equation*}
for all measurable set $O\subset B_R$. Since $R$ is arbitrary, $X(\cdot,t)$ and $X^{-t}(\cdot)$ preserves Lebesgue measure. In other words,
\begin{equation*}
\int_{\R^d} f(X(\alpha,t)) \,d\alpha =\int_{\R^d} f(x) \,dx
\end{equation*}
for all $f \in L^1(\R^d)$.
\end{proof}

\begin{lemma}\label{le uniqueness of the transport equation}\label{le weak solution is lagrangian}
All weak solutions to \eqref{eq 2euler} in $L^{\infty}([0,T],L^1_1 \bigcap L^{\infty}_1)$ are indeed Lagrangian.
\end{lemma}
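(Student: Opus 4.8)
The plan is to show that a weak solution $w$ is transported by its own flow, i.e. that it coincides with the push-forward of its initial datum. First I would record the relevant properties of the velocity $b:=Hw$. By the Proposition in Appendix A it is divergence free; by \eqref{eq bound for Hw by only L1 norm} it obeys $|b(x)|\lesssim\la x\ra^2$; the splitting $Hw=H_1w+H_2w$ with $x\cdot H_2w=0$ together with \eqref{eq boundness for H_1w} gives the radial bound $\big|b(x)\cdot\tfrac{x}{\la x\ra}\big|\lesssim\la x\ra$; and the estimate of Subsection 3.2 (through Lemma \ref{le sec3 A0}), namely $|Hw(x)-Hw(z)|\lesssim(\la x\ra^3+\la z\ra^3)F(|x-z|)$, shows $b$ is locally Log-Lipschitz. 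These are exactly the hypotheses of Lemma \ref{le app A0}, so the flow $X(\cdot,t)$ of $b$ exists and is a measure-preserving bijection of $\R^2$ with inverse $X^{-t}$. Since $w(X(\alpha,t),t)=w_0\big(X^{-t}(X(\alpha,t))\big)=w_0(\alpha)$, the vortex transport formula is equivalent to the Eulerian identity $w(\cdot,t)=w_0\circ X^{-t}$, which is what I would aim to prove.

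Next I would introduce the Lagrangian candidate $\tilde w(x,t):=w_0(X^{-t}(x))$ and verify that it is a weak solution of the \emph{same linear} transport equation with $b$ frozen. For a test function $\phi$, the measure-preserving change of variables $x=X(\alpha,t)$ gives $\int\tilde w(\cdot,t)\phi(\cdot,t)\,dx=\int w_0(\alpha)\phi(X(\alpha,t),t)\,d\alpha$; differentiating in $t$ and using $\partial_tX=b(X)$ yields $\frac{d}{dt}\phi(X(\alpha,t),t)=(\partial_t\phi+b\cdot\nabla\phi)(X(\alpha,t),t)$, and changing variables back produces the weak formulation for $\tilde w$. This step needs only absolute continuity of $X$ in $t$ and measure preservation, \emph{not} any spatial derivative of the flow. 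Thus $w$ and $\tilde w$ are two weak solutions of $\partial_t u+b\cdot\nabla u=0$ with the same velocity and datum, and it remains to prove uniqueness for this fixed linear equation. I stress that Corollary \ref{co uniqueness} may \emph{not} be invoked here, since its proof ultimately rests on the present lemma.

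For the linear uniqueness I would mollify, setting $b_\e:=\eta_\e*b$, which is smooth, divergence free, converges to $b$ locally uniformly with a Log-Lipschitz modulus uniform on each ball, and whose smooth flow $X_\e$ converges to $X$ locally uniformly by the stability already built into Lemma \ref{le app1}. Fixing $\psi\in C_c^\infty(\R^2)$ and testing the weak formulation of $w$ against the smooth backward-transported $\phi_\e(x,s):=\psi\big(X_\e(X_\e^{-s}(x),t)\big)$, which solves $\partial_s\phi_\e+b_\e\cdot\nabla\phi_\e=0$ with $\phi_\e(\cdot,t)=\psi$, and using $w_0\circ X_\e^{-t}\to\tilde w$ in $L^1$, the target identity reduces to
\begin{equation*}
\int_{\R^2}\big(w-\tilde w\big)(x,t)\,\psi(x)\,dx=\lim_{\e\to0}\int_0^t\!\!\int_{\R^2} w\,\big(b-b_\e\big)\cdot\nabla\phi_\e\,dx\,ds .
\end{equation*}
Because $\psi$ has compact support and the horizon is finite, $\phi_\e(\cdot,s)$ stays supported in a fixed compact set, on which $\|b-b_\e\|_{L^\infty}\lesssim F(\e)$ while $\|\nabla\phi_\e(\cdot,s)\|_{L^\infty}\lesssim e^{C(t-s)\|\nabla b_\e\|_\infty}\lesssim\e^{-C(t-s)}$, giving an error of size $\e^{1-Ct}$.

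The main obstacle is precisely this balancing. The honest flow $X(\cdot,t)$ of a Log-Lipschitz field is only Hölder continuous, so the exact transported test function $\psi\circ X(\cdot,t)$ is inadmissible; regularising forces the blow-up $\|\nabla\phi_\e\|\sim\e^{-Ct}$, competing against the modulus $F(\e)$ of $b-b_\e$. I would therefore close the estimate first on a short interval $[0,t_0]$ with $Ct_0<1$, and then iterate: for $t\in[0,T]$ the flow remains in a fixed compact set $K_T$, on which $b$ has a fixed Log-Lipschitz constant, so finitely many steps of length $\sim t_0$ cover $[0,T]$, restarting each time from the already-established identity $w(\cdot,t_0)=w_0\circ X^{-t_0}$. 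This yields $w=\tilde w$ on $[0,T]$, hence the vortex transport formula and the Lagrangian property. Alternatively one may simply invoke the classical uniqueness of the transport equation for Osgood velocity fields, which is consistent with the $W^{1,p}$-free viewpoint of the paper.
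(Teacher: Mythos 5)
Your reduction is exactly the paper's: you verify the hypotheses of Lemma \ref{le app A0} for $b=Hw$ (divergence free by Appendix A, the growth bound \eqref{eq bound for Hw by only L1 norm}, the radial bound via $x\cdot H_2w=0$ and \eqref{eq boundness for H_1w}, local Log-Lipschitz continuity from Section 3), obtain the measure-preserving flow, check that both $w$ and $\tilde w=w_0\circ X^{-t}$ are weak solutions of the \emph{same} linear continuity equation, and reduce the lemma to uniqueness for that linear problem; you are also right that invoking Corollary \ref{co uniqueness} here would be circular. The only genuine divergence is the final step: the paper disposes of the linear uniqueness by citing Clop--Jylh\"a--Mateu--Orobitg \cite{Clop}, whereas you prove it by duality, testing the weak formulation of $w$ against test functions transported backwards along the mollified flow $X_\e$ and balancing $\|b-b_\e\|_{L^{\infty}(K)}\lesssim F(\e)$ against the gradient blow-up $\|\nabla\phi_\e\|_{L^{\infty}}\lesssim \e^{-C(t-s)}$. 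This is a legitimate classical route: it is self-contained and shows exactly where the Osgood modulus enters, at the price of the technical bookkeeping that the citation absorbs (for instance, $b=Hw$ is merely bounded measurable in time, so $\phi_\e$ is only Lipschitz in $s$ and the weak formulation must first be extended to such test functions).

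One point in your iteration must be made precise, because as literally written (``restarting each time from the already-established identity $w(\cdot,t_0)=w_0\circ X^{-t_0}$'') it does not close. The short-time duality with a fixed $\psi$ yields the identity only \emph{tested against} $\psi$, for $t\le t_0(\psi)\sim 1/C_K$, where $C_K$ is the Log-Lipschitz constant of $b$ on the backward-reachable set $K$ of $\mathrm{supp}\,\psi$; since that constant grows like $\la x\ra^3$, one has $\inf_\psi t_0(\psi)=0$, so at no fixed later time is the restart identity available globally in $L^1$ --- it is available only on bounded sets, and a naive global restart forces ever larger balls with ever smaller step lengths. The remedy is the localization you gesture at with $K_T$: fix $\psi$ and $T$, let $K_T$ be a compact set containing every backward trajectory (of $b$ and of $b_\e$, $\e$ small) emanating from $\mathrm{supp}\,\psi$ over $[0,T]$ --- compact precisely because of the radial bound --- and observe that the backward images $A_s$ of $\mathrm{supp}\,\psi$ are nested: the backward image of $A_{s'}$ at time $s<s'$ lies in $A_s\subset K_T$. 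Running the induction only on these sets, all transported test functions in all steps are supported in $K_T$, so the Log-Lipschitz constant, hence the step length $t_0\sim 1/C_{K_T}$, is uniform, and each restart requires the identity only on $A_{kt_0}$, which is exactly what the previous step (run with test functions supported near $A_{kt_0}$) provides. With this reading your scheme covers $[0,T]$ in finitely many steps and the proof is complete; without it, the step as stated assumes more than the preceding step delivers.
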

\begin{proof}
Assume $w\in L^{\infty}([0,T],L^1_1 \bigcap L^{\infty}_1)$, then the argument in Section 3 implies that $|Hw(x,t)|\lesssim \,\,\la x\ra^2$ and $Hw(\cdot,t)$ is locally Log-Lipschitz continuous. Set $U=Hw$ and we consider the continuity equation
\begin{equation*}
\partial_t \psi +\nabla \cdot(\psi U)=0.
\end{equation*}
We say that $\psi$ is a weak solution to the continuity equation on [0,T) with initial data $w_0$ if for all test function $\phi \in C_c^{\infty}([0,T) \times R^2)$, there holds
\begin{equation*}
-\int w_0(x)\phi(x,0)\,dx=\int_0^T \int_{\R^2} \psi (\partial_t \phi +U\cdot \nabla \phi) \,dxdt.
\end{equation*}
 Therefore, one can check directly that the solution $w(x,t)$ to the two-dimensional helical Euler equation \eqref{eq 2euler} is also a weak solution to the continuity equation. Meanwhile, Lemma \ref{le app A0} implies that $\psi(x,t)=w_0(X^{-t}(x))$ is a weak solution to the continuity equation. Since weak solutions to the continuity equation in $L^1([0,T),L^1)$ is unique \cite{Clop}, we have $w(x,t)=w_0(X^{-t}(x))$ and hence all weak solutions to \eqref{eq 2euler} are Lagrangian.
\end{proof}
\section{}
We now sketch the proof of Theorem \ref{thm existence of smooth solution}.
\begin{proof}[Proof of the existence] Recall that solutions to \eqref{eq 2euler} are unique in $L^1_1 \bigcap L^{\infty}_1(\R^2)$, so it suffices to prove the existence of weak solutions in $[0,T]$. Now we fix $T>0$ and let $w_0 \in C_c^{\infty}(\R^2)$. Assume without loss of generality that $w_0 \neq 0$ and set $w^0(x,t):=w_0(x)$. Then we can define $w^n(x,t)$ inductively:
\begin{equation*}
\left\{\begin{aligned}\partial_t w^{n+1}+Hw^n \cdot \nabla w^{n+1}=&0 \\
w^{n+1}(x,0)=&w_0(x). \end{aligned}\right.
\end{equation*}
Multiply both side by $\la x\ra$, we obtain
\begin{equation*}\begin{aligned}
\partial_t \la x\ra w^{n+1}+Hw^n \cdot \nabla (\la x \ra w^{n+1})=&w^{n+1}Hw^n \cdot \nabla \la x\ra \\
=&w^{n+1}H_1w^n \cdot \frac{x}{\la x\ra}
\end{aligned}\end{equation*}
since $Hw=H_1w+H_2w$ and $x\cdot H_2w=0$.
Note that $\nabla \cdot Hw^n=0$, so a direct calculation shows that
\begin{equation*}\begin{aligned}
\|w^{n}(\cdot,t)\|_{L^p(\R^2)} =\|w_0\|_{L^p(\R^2)}
\end{aligned}\end{equation*}
for any $p \in [1,\infty]$. Moreover,
\begin{equation*}\begin{aligned}
&\|\la x\ra w^{n+1}(\cdot,t)\|_{L^1 \bigcap L^{\infty}(\R^2)}\\ \le& \|\la x \ra w_0\|_{L^1 \bigcap L^{\infty}(\R^2)}+\int_0^t \|w^{n+1}Hw^n\|_{L^1 \bigcap L^{\infty}(\R^2)} \,ds \\
\le& \|\la x \ra w_0\|_{L^1 \bigcap L^{\infty}(\R^2)}+\int_0^t \|w^{n+1}\|_{L^{\infty}(\R^2)}\|Hw^n\|_{L^1 \bigcap L^{\infty}(\R^2)} \,ds \\
\le& \|\la x \ra w_0\|_{L^1 \bigcap L^{\infty}(\R^2)}+\|w_0\|_{L^{\infty}(\R^2)}\int_0^t \|Hw^n\|_{L^1 \bigcap L^{\infty}(\R^2)} \,ds.
\end{aligned}\end{equation*}
Thus,
$$\|w^n(t)\|_{L^1_1\bigcap L^{\infty}_1(\R^2)} \lesssim 1$$
for all $t\in [0,T]$. Together with \eqref{eq boundness for H_1w} we get
$$
|Hw^n(x)| \lesssim \la x \ra \quad \text{and} \quad \left|Hw^n(x) \cdot \frac{x}{\la x\ra}\right| \lesssim 1,
$$
which implies that $\la X^n(\alpha,t) \ra \approx \la \alpha \ra$
and hence there exists $M>0$ such that $w^n(\cdot,t)$ supported in $B_M(0)$ for all $n>0$ and $t \in [0,T]$.
Furthermore, Lemma \ref{le sec3 A0} yields
$$
|Hw^n(x,t)-Hw^n(z,t)|\lesssim (\la x\ra +\la z\ra) F(|x-z|)
$$
for any $R>0$ and $t \in [0,T]$. So in view of Lemma \ref{le app A0}, there exist $X(\alpha,t)$, $\tilde{X}(\alpha,t)$ and a subsequence $n_k$ such that $X_{n_k}$ and $X_{n_k-1}$  converge uniformly to $X$ and $\tilde{X}$, respectively in compact subset of $\R^2 \times [0,T]$. Note that $w^{n_k+1}$ satisfies the transport equation
\begin{equation*}
\left\{\begin{aligned}\partial_t w^{n_k+1}+Hw^{n_k} \cdot \nabla w^{n_k+1}=&0 \\
w^{n_k+1}(x,0)=&w_0(x). \end{aligned}\right.
\end{equation*}
So letting $k \to \infty$, by a similar argument as in Section 5, we find that $w$ satisfies
\begin{equation*}
\left\{\begin{aligned}\partial_t w+H\tilde{w} \cdot \nabla w=&0 \\
w(x,0)=&w_0(x), \end{aligned}\right.
\end{equation*}
where $w(x,t):=w_0(X^{-t}(x))$ and $\tilde{w}(x,t):=w_0(\tilde{X}^{-t}(x))$. To complete the proof, it remains to show that $w \equiv \tilde{w}$, which is equivalent to $X(\alpha,t)=\tilde{X}(\alpha,t)$ for every $\alpha \in$ supp $w_0$. Therefore, we define the distance
\begin{equation*}
D_n(t):=\int |X^{n+1}(\alpha,t)-X^n(\alpha,t)| |w_0(\alpha)|\,d\alpha.
\end{equation*}
 Observe that $D_n(t)$ is uniformly bounded since $w_0$ has compact support. So by dominating convergence theorem, it suffices to show that for any $t\in [0,T]$, $\lim_{n\to \infty} D_n(t) = 0$. First, we use the Newton-Leibniz formula to conclude that
\begin{align}
D_n(t)\le&\int_0^t \int  \left| Hw^{n+1}(X^{n+1}(\alpha,s),s)-Hw^{n}(X^{n}(\alpha,s),s) \right||w_0(\alpha)|\,d\alpha\,ds\nonumber\\
\le&\int_0^t \int  \left| Hw^{n+1}(X^{n+1}(\alpha,s),s)-Hw^{n+1}(X^{n}(\alpha,s),s) \right||w_0(\alpha)|\,d\alpha\,ds\label{eq app B1} \\
&+\int_0^t \int  \left| Hw^{n+1}(X^{n}(\alpha,s),s)-Hw^{n}(X^{n}(\alpha,s),s) \right||w_0(\alpha)|\,d\alpha\,ds. \label{eq app B2}
\end{align}
For \eqref{eq app B1}, inequality \eqref{eq hw(x)-hw(z) 1}, \eqref{eq hw(x)-hw(z) 3} and Lemma \ref{le sec3 A0} yield
\begin{equation*}\begin{aligned}
&\int_0^t \int  \left| Hw^{n+1}(X^{n+1}(\alpha,s),s)-Hw^{n+1}(X^{n}(\alpha,s),s) \right||w_0(\alpha)|\,d\alpha\,ds \\
\lesssim& \int_0^t F(D_{n+1}(s))\,ds,
\end{aligned}\end{equation*}
where we have used the Jensen's inequality and the fact that $w_0 \neq 0$. For \eqref{eq app B2}, note that $X^n(\cdot,t)$ preserves Lebesgue measure, so it follows that
\begin{equation*}\begin{aligned}
&\int_0^t \int  \left| Hw^{n+1}(X^{n}(\alpha,s),s)-Hw^{n}(X^{n}(\alpha,s),s) \right||w_0(\alpha)|\,d\alpha\,ds \\
=& \int_0^t \int  \left| Hw^{n+1}(x,s)-Hw^{n}(x,s) \right||w^{n+1}(x,s)|\,dx\,ds\\
=& \int_0^t \int \left| \int H(x,y)w^{n+1}(y,s)\,dy-\int H(x,y)w^n(y,s)\,dy \right||w^{n+1}(x,s)|dxds.
\end{aligned}\end{equation*}
Recall that $w^n(X^{n-1}(\alpha,t),t)=w_0(\alpha)$, so we have
\begin{equation*}\begin{aligned}
&\int_0^t \int  \left| Hw^{n+1}(X^{n}(\alpha,s),s)-Hw^{n}(X^{n}(\alpha,s),s) \right||w_0(\alpha)|\,d\alpha\,ds \\
=& \int_0^t \int \left| \int \left(H(x,X^n(\beta,s))-H(x,X^{n-1}(\beta,s))\right)|w_0(\beta)|\,d\beta \right||w^{n+1}(x,s)|dxds.
\end{aligned}\end{equation*}

Thus, \eqref{eq sec2 B1}, \eqref{eq def of H}, \eqref{eq def of H1}, \eqref{eq def of H2}, \eqref{eq hw(x)-hw(z) 1}, \eqref{eq sec3 B6} and Lemma \ref{le sec3 A0} yield
\begin{equation*}\begin{aligned}
&\int_0^t \int  \left| Hw^{n+1}(X^{n}(\alpha,s),s)-Hw^{n}(X^{n}(\alpha,s),s) \right||w_0(\alpha)|\,d\alpha\,ds \\
\lesssim& \int_0^t F(D_{n}(s))\,ds
\end{aligned}\end{equation*}
since $w(\cdot,t)$ supported in $B_M(0)$ for all $t \in [0,T]$. Gathering the estimates above, we arrive at
\begin{equation*}
D_n(t)\lesssim \int_0^t F(D_n(s))\,ds +\int_0^t F(D_{n-1}(s))\,ds.
\end{equation*}
Let $\tilde{D}_n(t):=\sup_{m\ge n} D_m(t)$, then it follows that
$$\tilde{D}_n(t) \lesssim \int_0^t F(\tilde{D}_{n-1}(s))\,ds.$$
Setting $D^*(t):=\lim_{n\to \infty}\tilde{D}_n(t)$ and letting $n \to +\infty$, we have
$$
D^*(t) \lesssim \int_0^t F(D^*(s))\,ds,
$$
which implies $D^* \equiv 0$ by Osgood's Lemma. Observe that $D_n(t) \le \tilde{D}_n(t)$, so we finally get
$$
\lim_{n\to \infty} D_n(t)=0
$$
and hence completes the proof.
\end{proof}

\end{document}